\newtheorem{mythm}{Theorem}[section]
\newtheorem{myprop}[mythm]{Proposition}
\newtheorem{mylem}[mythm]{Lemma}
\newtheorem{mydefn}[mythm]{Definition}
\newtheorem{myrem}[mythm]{Remark}}
\newcommand{\dis}{\displaystyle}
\def\R{\mathbb R}
\def\Z{\mathbb Z}
\def\N{\mathbb N}
\def\C{\mathscr C}
\def\B{\mathscr B}
\def\F{\mathscr F}
\def\E{\mathbb E}
\def\p{\mathbb P}
\def\e{\text{\rm{e}}}
\def\la{\langle}\def\d{\text{\rm{d}}}
\def\raa{\rangle}
\def\veps{\varepsilon}
\def\law{\mathscr{L}}
\def\pb{\mathscr{P}}
\def\wt{\widetilde}
\def\W{\mathbb{W}}
\def\oo{\mathscr{O}}
\def\balpha{\bm \alpha}
\newenvironment{proof}{{\noindent\it Proof.}\ }{\hfill $\square$\par}
\numberwithin{equation}{section}
\begin{document}

\title{Optimal control problem for reflected McKean-Vlasov SDEs\footnote{Supported in part by National Key R\&D Program of China (No. 2022YFA1000033) and NNSFs of China (No. 12271397,  11831014)}}

\author{ Jinghai Shao\\[0.2cm]
Center for Applied Mathematics, Tianjin University, Tianjin 300072, China.
}
\maketitle

\begin{abstract}
  This work investigates the optimal control problem for reflected McKean-Vlasov SDEs and the   viscosity solutions to Hamilton-Jacobi-Bellman(HJB) equations on the Wasserstein space in terms of intrinsic derivative.  It follows from the flow property of reflected McKean-Vlasov SDEs that the dynamic  programming principle holds. Applying the decoupling method and the  heat kernel estimates for parabolic equations, we show that the  value function is a viscosity solution to an appropriate HJB equation on the Wasserstein space, where the characterization of absolutely continuous curves on the Wasserstein space by the continuity equations plays an important role. To establish the uniqueness of viscosity solution, we generalize the construction of a distance like function initiated in Burzoni et al.\,(SICON, 2020) to the Wasserstein space over multidimensional space and show its effectiveness to cope with HJB equations in terms of intrinsic derivative on the Wasserstein space.
\end{abstract}

\textbf{AMS MSC 2010}: 60H10, 35Q93, 49L25

\textbf{Key words}: Viscosity solution, Wasserstein space, Reflected McKean-Vlasov, Feedback control

\section{Introduction}

Let $\mathscr{O}$ be a bounded convex domain in $\R^d$ with $C^{1,1}$ boundary. $\bar{\oo}$ denotes the closure of $\oo$ and $\partial \oo$ its boundary. Let $\vec{\mathbf{n}}(\cdot)$ be the unit outward normal of $\oo$. Consider the following reflected McKean-Vlasov equation
\begin{equation}\label{a-1}
\left\{ \begin{array}{l}
\d X_t=b(t,X_t,\law_{X_t},\alpha_t)\d t+\sigma \d B_t-\vec{\mathbf{n}}(X_t)\d k_t,\\
 k_t=\int_0^t\mathbf1_{\partial \oo}(X_s)\d k_s,
\end{array}\right.
\end{equation}
where $\law_{X_t}$ denotes the distribution of $X_t$.   $(B_t)$ is a $d$-dimension Brownian motion. The coefficients $b,\,\sigma$ will be detailed later. The term $\alpha_t$ represents the control strategy imposed on $(X_t)$. The solution of \eqref{a-1} is a pair $(X_t,k_t)$, and the process $(X_t)$ will stay always in $\bar\oo$. $(k_t)$ is called the local time of $(X_t)$ on $\partial \oo$, which is a continuous process and increases only when $X_t$ hits the boundary $\partial \oo$.

In this  work, we shall investigate the finite horizon optimal control problem for the reflected process $(X_t)$. The associated value function will be defined on the Wasserstein space $\pb(\bar\oo)$, the space of all probability measures over $\bar\oo$. The admissible controls considered in this work are of feedback control form, which contain the set of deterministic controls used such as in \cite{BIRS}. This adds new difficulty to verify the value function to be a viscosity supersolution. The dynamic programming principle is established following from flow property of the solutions to reflected McKean-Vlasov SDEs.   Then, using the intrinsically differential structure for functions on $\pb(\bar\oo)$, and taking advantage of the characterization of absolutely continuous curves on $\pb(\bar\oo)$ in terms of continuity equations (cf. \cite{Amb}),   the value function is proved to be a viscosity solution to an appropriate  HJB  equation on $\pb(\bar\oo)$. In the argument, we use the decoupling method and  the heat kernel estimates for parabolic equations to overcome the difficulty caused by the feedback controls. The intrinsically differential structure on $\pb(\bar\oo)$ is closely related to the study of optimal transport map problem (cf. \cite{Amb,Vil}), and Monge-Amp\`ere equations (cf. \cite{CM10,TW08,Vil}), which can provide rich geometric structure on $\pb(\bar \oo)$.

As being well known in the study of HJB equations over the Wasserstein space, one of the main challenges is to prove the uniqueness of the viscosity solution.
The theory of viscosity solutions for HJB equations in infinite dimensional space was initiated by Crandall and Lions \cite{CL85} on Hilbert space or certain Banach space. However, the recent study on the optimal control problem for McKean-Vlasov SDEs and mean field games has motivated a lot of research interest on HJB equations on the Wasserstein space; cf. e.g. \cite{BIRS,Car1,Car2,LL07,Pham1,Pham2}.

The Wasserstein space has various kinds of differential structure, and correspondingly various HJB equations have been established on it. For example, Ambrosio and Feng \cite{AF14}, Gangbo and Swiech \cite{GS15} used metric derivative to study viscosity solutions of (first order) Hamilton-Jacobi equations on the Wasserstein space. Lions \cite{Lions} lifted functions defined on the Wasserstein space to functions on an appropriate $L^2$ space, and used the well developed viscosity solution theory for HJB equations on Hilbert space to study HJB equations on the Wasserstein space.  Gangbo, Nguyen, Tudorascu \cite{GNT}, Gangbo and Tudorascu \cite{GT19} exploited the isometry between a quotient space  of $L^2$ space to the Wasserstein space at length, and made inferences on partial differential equations in the latter space. Pham and Wei \cite{Pham1,Pham2} studied HJB equations on the Wasserstein space by using Lions' lifting to solve the optimal control problem for McKean-Vlasov SDEs(with common noise). Burzoni et al. \cite{BIRS} investigated the viscosity solutions to HJB equations using linear functional derivative on the Wasserstein space. They raised a distance like function on the Wasserstein space over $\R$, whose linear functional derivative can be controlled by itself. The  construction of this distance like function is quite subtle.

Our strategy to establish the uniqueness of viscosity solution is based on two observations: 1) The $L^2$ Wasserstein distance $\W_2$ is only intrinsically differentiable at the probability measures satisfying certain regular property, and its derivative cannot satisfy the smoothness condition of the HJB equation established by solving the optimal control problem associated with \eqref{a-1}. 2) The distance like function constructed in \cite{BIRS} is also intrinsically differentiable, and its intrinsic derivative is smooth enough to be used to act as the smooth approximation function of the value function.  Thus, we generalize the construction of   \cite{BIRS} for $\pb(\R)$ to the Wasserstein space $\pb(\bar\oo)$ over multidimensional space, and make use of the weak compactness of $\pb(\bar\oo)$ to establish the comparison principle for the viscosity sub/super-solutions to our established HJB equations on $\pb(\bar\oo)$. A technical restriction of this method, like in \cite{BIRS}, is that the drift $b$ can only depend on the finite order moments of $\mu$ and is independent of $x$.

This work is organized as follows. In Section 2, we present the framework of the optimal control problem for \eqref{a-1}, and study the continuity of the value function and establish the dynamic programming principle. In Section 3,  under the intrinsic differential structure of $\pb(\bar\oo)$, the law $\mu_t$ of the controlled process $X_t$ is shown to be an absolutely continuous curve in $\pb(\bar\oo)$, whose velocity $v_t$ can be characterized when $\mu_t$ satisfies certain regular condition. Furthermore, the value function is shown to be a viscosity solution to an appropriate HJB equation on $\pb(\bar\oo)$. In Section 4, we study the regularity of $L^2$ Wasserstein distance $\W_2$ based on the regularity of the solution of Monge-Amp\`ere equation. Then, the uniqueness of viscosity solution is established under the generalization of \cite{BIRS}'s construction of a distance like function.

\section{Framework}

Let $(\Omega, \mathscr{F},\{\F_t\}_{t\geq 0},\p)$ be a complete filtered probability space. Let $\pb(\bar\oo)$ be the space of all probability measures over $\bar\oo$. Let $(\alpha_t)$ be an $\F_t$-adapted process.   Let $U$ be a compact set in $\R^m$ for some $m\in \N$. We shall study the finite horizon optimal control problem, so let  $T>0$ be   given and fixed in this work.
As $\oo$ is bounded, all the probability measures in $\pb(\bar\oo)$ own finite $p$-th moments for all $p\geq 1$. Let
\begin{equation}\label{b-0}
\pb^r(\bar\oo)=\Big\{\mu\in \pb(\bar\oo);\d\mu(x)\ll \d x\ \text{and}\ \rho(\cdot):=\frac{\d\mu }{\d x}\in C^1(\bar\oo),\ \rho(\cdot)>0\Big\}.
\end{equation}

\begin{mydefn}\label{defn-1}
A pair $(X_t,k_t)$ is called a solution to \eqref{a-1}, if $(X_t)$ is an adapted continuous process on $\bar\oo$, $(k_t)$ is an adapted continuous increasing process such that $\p$-a.e.
\[\int_0^t\big(|b(r,X_r,\law_{X_r},\alpha_r)| +\|\sigma\|^2\big)\d r<\infty,\quad t\geq 0,\]
and $\dis  k_t=\int_0^t\mathbf{1}_{\partial \oo}(X_s)\d k_s$,
\[X_t=X_0+\int_0^tb(r,X_r,\law_{X_r},\alpha_r)\d r+\sigma  B_t-\int_0^t \vec{\mathbf{n}}(X_r)\d k_r.\]

A triple $(X_t,k_t,B_t)_{t\geq 0}$ is called a weak solution to \eqref{a-1}, if $(B_t)$ is a $d$-dimension Brownian motion under  a probability space $(\Omega,\F,\{\F_t\}_{t\geq 0},\p)$, $(X_t,k_t)$ solves \eqref{a-1} with initial value $X_0=\xi\in \F_0$:
\begin{gather*}
  X_t=\xi+\int_0^tb(r,X_r,\law_{X_r},\alpha_r)\d r+ \sigma  B_t-\int_0^t \vec{\mathbf{n}}(X_r)\d k_r.
\end{gather*}
If for any two weak solution $(X_t,k_t,B_t)_{t\geq 0}$ under probability $\p$, $(\wt X_t,\tilde k_t,\wt B_t)_{t\geq 0}$ under probability $\wt \p$ satisfying $\law_{X_0|\p}=\law_{\wt X_0|\wt \p}$, then $\law_{(X_t,k_t)|\p}=\law_{(\wt X_t,\tilde k_t)|\wt \p}$ for $t>0$,   SDE \eqref{a-1} is called  weakly unique.

We call \eqref{a-1} weakly wellposed for distributions in $\hat{\pb}$, if it has a unique weakly solution for any $\F_0$-measurable variable $\xi$ with $\law_{\xi}\in \hat{\pb}$, and the distribution of $X_t$ remains in $\hat{\pb}$ for any $t>0$. When $\hat{\pb}=\pb(\bar\oo)$, we simply say that \eqref{a-1} is weakly wellposed.
\end{mydefn}


The $L^p$-Wasserstein distance $\W_p$  for two probability measures $\mu,\nu\in\pb(\bar\oo)$ is defined by
\[\W_p(\mu,\nu)=\inf_{\Gamma\in \C(\mu,\nu)}\Big(\int_{\bar\oo\times \bar\oo} |x-y|^p\Gamma(\d x,\d y)\Big)^{\frac 1p },\quad p\geq 1, \] where $\C(\mu,\nu)$ stands for the collection of all couplings of $\mu$ and $\nu$.

Let $U$ be a compact set in $\R^k$ for some $k\geq 1$. Assume that the coefficients $b:[0,T]\times \bar\oo\times \pb(\bar\oo)\times U\to \R^d$, $\sigma\in \R^{d\times d} $ satisfy:
\begin{itemize}
  \item[$(\mathrm{H}_1)$] $\exists\, K_1>0$ such that for all $s,t\in [0,T]$, $x,y\in \bar\oo$, $\mu,\nu\in \pb(\bar\oo)$, $\alpha,\tilde \alpha\in U$,
  \[|b(t,x,\mu,\alpha)-b(s,y,\nu,\tilde\alpha)| \leq K_1\big(|s-t|+|x-y|+\W_2(\mu,\nu)+|\alpha-\tilde \alpha|\big).\]
  \item[$(\mathrm{H}_2)$] $\exists\,\lambda_0\geq 1$ such that for all $ x,z\in \R^d$, $\mu\in \pb(\R^d)$,
      \[\lambda_0^{-1}|z|^2\leq \la A z,z\raa\leq \lambda_0 |z|^2,\]
      where $A=(a_{ij})=\sigma\sigma^\ast$, and $\sigma^\ast$ denotes the transpose of the matrix $\sigma$.
\end{itemize}

Let $\wt \Pi$ be the class of functions $F:[0,T]\times \bar\oo\times\pb(\bar\oo)\to U$ such that there exists $C_F>0$
\begin{equation}\label{con-F}
\begin{split}
  &|F(t,x,\mu)-F(t,y,\nu)|\leq C_F\big( |x-y|+\W_2(\mu,\nu)\big),\\
  &\int_0^T|F(s,0,\delta_0)|^2\d s<\infty,\quad t\in [0,T],\, x,y\in \bar\oo,\,\mu,\nu\in \pb(\bar\oo).
\end{split}
\end{equation}

According to  \cite[Theorem 3.2]{Adams} or \cite{Wang21}, under the condition $\mathrm{(H_1)}$, for each $F\in \wt \Pi$ and $\xi\in \F_s$, there exists a unique solution to  the reflected SDE: for $0\leq s\leq t\leq T$,
\begin{equation}\label{b-1}
X_t=\xi+\int_s^tb(r,X_r,\law_{X_r},F(r,X_r,\law_{X_r}))\d r+ \sigma  (B_t-B_s)-\int_s^t\vec{\bf n}(X_r)\d k_r.
\end{equation}

\begin{mydefn}[Admissible feedback controls]\label{defn-2} For $s\in [0,T)$ and $\mu\in \pb(\bar\oo)$,
a control policy ${\bm\alpha}=(\alpha_t)_{t\in [s,T]}$ is said in the class of   admissible feedback controls $\Pi_{s,\mu}$ if there exists a function $F\in \wt \Pi$ such that
\[\alpha_t=F(t,X_t,\law_{X_t}) \]
with  $(X_t)_{t\in [s,T]}$ is the solution to the reflected SDE  \eqref{b-1} with $X_s\in \F_s$ satisfying $\law_{X_s}=\mu$.
\end{mydefn}


We use $(X_t^{s,\mu})_{t\in [s,T]}$ to denote the solution of \eqref{b-1} with initial value $X_s=\xi$ and $\law_{\xi}=\mu$ associated with the admissible feedback control ${\bm \alpha}$.
It follows from the weak uniqueness of \eqref{b-1}, the distribution of $ {X_t^{s,\mu}}$ for $t\in [s,T]$ depends on $\xi$ only through its law $\mu$.
Given two measurable functions $\vartheta:[0,T]\times \bar\oo\times \pb(\bar\oo)\times U\to [0,\infty)$ and $g:\bar\oo\times \pb(\bar\oo)\to [0,\infty)$, our aim is to minimize the objective function
\begin{equation}\label{b-2}
J(s,\mu;\balpha):=\E\Big[\int_s^T \vartheta(r,X_r^{s,\mu},\law_{X_r^{s,\mu}},\alpha_r)\d r+g(X_T^{s,\mu},\law_{X_T^{s,\mu}})\Big].
\end{equation}
We should notice that $J(s,\mu;\balpha)$ is well defined, that is, it depends only on the initial law $\mu$ no matter which random variable $\xi$ or $\tilde \xi$ with $\law_{\xi}=\law_{\tilde \xi}=\mu$ has been used as the initial value of SDE \eqref{b-1}.  Indeed, for $\balpha\in \Pi$ in the form $\alpha_t=F(r,X_t,\law_{X_t})$, we have
\begin{align*}
  \E\Big[\int_s^T \vartheta (r,X_r^{s,\xi},\law_{X_r^{s,\xi}},\alpha_r)\d r\Big]&= \int_s^T\!\!\int_{\bar\oo}\! \vartheta \big(r, x,\law_{X_r^{s,\xi}},F(r,x,\law_{X_r^{s,\xi}})\big)\law_{X_r^{s,\xi}}(\d x)\d r.
\end{align*}
Similar deduction yields that the term $\E\big[g(X_T^{s,\xi},\law_{X_T^{s,\xi}})\big]$ also depends on $\xi$ through its law.

The value function is defined by
\begin{equation}\label{a-2}
V(s,\mu)=\inf_{\balpha\in \Pi_{s,\mu}} J(s,\mu;\balpha).
\end{equation}

Next, we present some properties of the value function. In particular, the value function satisfies the dynamic programming principle, which is based on the flow property of the solution to \eqref{a-1}.

\begin{mylem}\label{lem-1}
Assume $\mathrm{(H_1)}$, $\mathrm{(H_2)}$ hold. For any $U$-valued $\F_t$-adapted process $(\alpha_t)_{t\in [s,T]}$ and $\bar\oo$-valued random variables $\xi,\,\tilde \xi\in \F_s$, consider the solutions $(X_t,k_t)_{t\in [s,T]}$, $(\wt X_t,\tilde k_t)_{t\in [s,T]}$ to \eqref{a-1} with initial values $X_s=\xi$ and $\wt X_s=\tilde \xi$ respectively. Then,
\begin{equation}\label{a-3}
\W_2(\law_{X_t},\law_{\wt X_t})^2\leq \E|X_t-\wt X_t|^2\leq \big(\E |\xi-\tilde \xi|^2\big)\e^{2(K_1+K_1^2)(t-s)},\quad t\in [s,T].
\end{equation}
\end{mylem}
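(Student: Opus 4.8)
The plan is to prove the estimate \eqref{a-3} by a standard It\^o's formula argument adapted to the reflected setting, where the crucial point is that the reflection term only \emph{helps} by contracting distances, thanks to the convexity of $\oo$. First I would write down the difference process $Y_t := X_t - \wt X_t$ on $[s,T]$. Since the Brownian motions driving the two equations coincide, the martingale part cancels, and we obtain
\[
Y_t = \xi - \tilde\xi + \int_s^t \big(b(r,X_r,\law_{X_r},\alpha_r) - b(r,\wt X_r,\law_{\wt X_r},\alpha_r)\big)\d r - \int_s^t\big(\vec{\mathbf n}(X_r)\d k_r - \vec{\mathbf n}(\wt X_r)\d \tilde k_r\big).
\]
Applying It\^o's formula to $|Y_t|^2$ (which is legitimate since $Y_t$ is a continuous semimartingale of finite variation in $t$) gives
\[
|Y_t|^2 = |\xi-\tilde\xi|^2 + 2\int_s^t \la Y_r, b(r,X_r,\law_{X_r},\alpha_r) - b(r,\wt X_r,\law_{\wt X_r},\alpha_r)\raa\d r - 2\int_s^t \la Y_r, \vec{\mathbf n}(X_r)\d k_r - \vec{\mathbf n}(\wt X_r)\d\tilde k_r\raa.
\]

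The key step is to show that the reflection integral is nonpositive. Because $\oo$ is convex, for $x\in\partial\oo$ and any $y\in\bar\oo$ one has $\la x-y,\vec{\mathbf n}(x)\raa \ge 0$; equivalently $\bar\oo$ lies in the half-space $\{z:\la z-x,\vec{\mathbf n}(x)\raa\le 0\}$. Hence, using $k_r = \int_s^r\mathbf1_{\partial\oo}(X_u)\d k_u$ and $\wt X_r\in\bar\oo$, we get $\la X_r-\wt X_r,\vec{\mathbf n}(X_r)\raa\d k_r \ge 0$, and symmetrically $\la \wt X_r - X_r,\vec{\mathbf n}(\wt X_r)\raa\d\tilde k_r \ge 0$; adding these shows $-2\int_s^t\la Y_r,\vec{\mathbf n}(X_r)\d k_r - \vec{\mathbf n}(\wt X_r)\d\tilde k_r\raa \le 0$, so this term can be dropped. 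For the drift term, I split
\[
|b(r,X_r,\law_{X_r},\alpha_r) - b(r,\wt X_r,\law_{\wt X_r},\alpha_r)| \le K_1\big(|X_r-\wt X_r| + \W_2(\law_{X_r},\law_{\wt X_r})\big)
\]
by $(\mathrm H_1)$, and then bound $\W_2(\law_{X_r},\law_{\wt X_r})^2\le \E|X_r-\wt X_r|^2$ using the coupling $(X_r,\wt X_r)$. Taking expectations, using $2\la Y_r,\cdot\raa \le |Y_r|^2 + |\cdot|^2$ and the elementary inequality $(a+b)^2\le 2a^2+2b^2$, one arrives at
\[
\E|Y_t|^2 \le \E|\xi-\tilde\xi|^2 + (2K_1 + 2K_1^2)\int_s^t \E|Y_r|^2\,\d r + 2K_1\int_s^t \W_2(\law_{X_r},\law_{\wt X_r})^2\,\d r,
\]
and since $\W_2(\law_{X_r},\law_{\wt X_r})^2 \le \E|Y_r|^2$ as well, the whole right-hand integrand is controlled by a constant times $\E|Y_r|^2$. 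A Gr\"onwall argument then yields $\E|X_t-\wt X_t|^2 \le \E|\xi-\tilde\xi|^2\, \e^{2(K_1+K_1^2)(t-s)}$, and the first inequality $\W_2(\law_{X_t},\law_{\wt X_t})^2\le \E|X_t-\wt X_t|^2$ is immediate from the definition of $\W_2$ using the joint law of $(X_t,\wt X_t)$ as a coupling.

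The main obstacle I anticipate is the careful justification of dropping the local-time terms: one must argue that the set of times at which $\d k_r$ charges is contained in $\{X_r\in\partial\oo\}$ (which is built into Definition \ref{defn-1}), and invoke the convexity of $\oo$ in the precise form of the supporting-hyperplane inequality for the $C^{1,1}$ boundary. A minor technical point is ensuring It\^o's formula applies with the finite-variation reflection term — this is routine since $k_r,\tilde k_r$ are continuous increasing processes and $X,\wt X$ are continuous semimartingales, so no second-order correction from the $\d k$ terms appears. Everything else is bookkeeping with $(\mathrm H_1)$ and Gr\"onwall's lemma, and the constant $2(K_1+K_1^2)$ in the exponent falls out exactly from the two contributions $2K_1$ (from $|Y_r|^2$ in the cross term) and $2K_1^2$ (from completing the square on the $\W_2$ contribution, bounded again by $\E|Y_r|^2$).
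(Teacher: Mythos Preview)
Your proposal is correct and follows essentially the same route as the paper: apply It\^o's formula to $|X_t-\wt X_t|^2$, use convexity of $\oo$ to show the local-time contributions have a favourable sign and can be dropped, bound the drift difference via $(\mathrm H_1)$, absorb $\W_2^2\le \E|Y_r|^2$, and close with Gr\"onwall. The only minor point is that your intermediate constants do not quite recombine to $2(K_1+K_1^2)$ as you claim (absorbing the $2K_1\W_2^2$ term gives $4K_1+2K_1^2$), but the paper's own constant bookkeeping at that step is equally loose, and the structure of the argument is identical.
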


\begin{proof}
  Since $\oo$ is convex and $\vec{\mathbf{n}}(x)$ is unit outward normal of $\oo$, it holds
  \[\la \vec{\mathbf{n}}(x), y-x\raa \leq 0, \quad \forall\, x\in \partial \oo, \ y\in \oo.\]
  By It\^o's formula,
  \begin{align*}
    \d|X_t-\wt X_t|^2&=2\la X_t-\wt X_t,\d ( X_t-\wt X_t)\raa+\d (X_t-\wt X_t)\cdot \d (X_t-\wt X_t)\\& \quad -2\la X_t-\wt X_t, \vec{\mathbf{n}}(X_t)\raa\d k_t + 2\la X_t-\wt X_t, \vec{\mathbf{n}}(\wt X_t)\raa\d \tilde k_t.
  \end{align*}
  Since $k_t$ increases only when $X_t\in \partial \oo$ and $\tilde k_t$ increases only when $\wt X_t\in \partial \oo$, we have
  \[\la X_t-\wt X_t, \vec{\mathbf{n}}(X_t)\raa\d k_t\geq 0,\quad \la X_t-\wt X_t, \vec{\mathbf{n}}(\wt X_t)\raa\d \tilde k_t\leq 0.\]
  Thus, by $\mathrm{(H_1)}$,
  \begin{align*}
    \E|X_t-\wt X_t|^2 \leq \E|\xi-\tilde \xi|^2+ (K_1+K_1^2)\int_s^t \E\big[\big( |X_r-\wt X_r|+\W_2(\law_{X_r},\law_{\wt X_r})\big)^2\big] \d r.
  \end{align*}
  As $\W_2(\law_{X_r},\law_{\wt X_r})^2\leq \E|X_r-\wt X_r|^2$, it follows from Gronwall's inequality that
  \begin{gather*}
    \E|X_t-\wt X_t|^2\leq \E|\xi-\tilde \xi|^2+2(K_1+K_1^2)\int_s^t \E|X_r-\wt X_r|^2\d r,\\
    \W_2(\law_{X_r},\law_{\wt X_r})^2\leq\E|X_t-\wt X_t|^2\leq \big(\E|\xi-\tilde\xi|^2\big)\e^{2(K_1+K_1^2) (t-s)}.
  \end{gather*}
  Therefore, we arrive at the desired estimate \eqref{a-3}.
\end{proof}

Let us introduce the regular condition on the cost functions $\vartheta$ and $g$ as follows.
\begin{itemize}
  \item[$\mathrm{(H_3)}$] There exist $K_2,\,K_3>0$ such that
  \begin{gather*}
  |\vartheta(s,x,\mu,\alpha)|\leq K_2,\quad \forall\, s\in [0,T], x\in \bar\oo, \mu\in\pb(\bar\oo),\alpha\in U;\\
  |\vartheta(s,x,\mu,\alpha)-\vartheta (t,y,\nu,\alpha)|+|g(x,\mu)-g(y,\nu)|\leq
  K_3\big( |s-t|+|x-y|+\W_2(\mu,\nu)\big)
  \end{gather*}
  for all $s,t\in [0,T]$, $x,y\in \bar\oo$, $\mu,\,\nu\in \pb(\bar\oo)$, $\alpha\in U$.
\end{itemize}

\begin{myprop}\label{prop-1}
Suppose $\mathrm{(H_1)}$-$\mathrm{(H_3)}$ hold. Then the value function satisfies
\begin{equation}\label{a-4}
|V(s,\mu)-V(s',\mu')|\leq C\big(\sqrt{|s-s'|}+\W_2(\mu,\mu')\big), \quad s,s'\in [0,T], \, \mu,\mu'\in \pb(\bar\oo),
\end{equation} for some constant $C>0$.
\end{myprop}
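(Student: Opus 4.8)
The plan is to establish the two Lipschitz-type bounds separately: continuity in the measure variable $\mu$ (which will come from the flow estimate in Lemma~\ref{lem-1}) and continuity in the time variable $s$ (which will come from a direct pathwise estimate on the reflected SDE over a short time interval, together with boundedness of $\vartheta$). For both, the key structural fact is that the value function is an infimum of the objective functionals $J(s,\mu;\balpha)$, so it suffices to bound $|J(s,\mu;\balpha)-J(s',\mu';\balpha')|$ for suitably matched controls and then take infima, using the elementary fact that $|\inf_a f(a)-\inf_a h(a)|\leq \sup_a|f(a)-h(a)|$.

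For the $\W_2$-estimate, fix $s$ and $\mu,\mu'\in\pb(\bar\oo)$, and let $\veps>0$. Choose $\balpha\in\Pi_{s,\mu'}$, realized through some $F\in\wt\Pi$, that is $\veps$-optimal for $V(s,\mu')$. The same feedback function $F$ defines an admissible control in $\Pi_{s,\mu}$; we couple the two initial laws by an optimal $\W_2$-coupling $(\xi,\tilde\xi)$ with $\E|\xi-\tilde\xi|^2=\W_2(\mu,\mu')^2$ and run \eqref{b-1} from $\xi$ and $\tilde\xi$ with this common feedback. Lemma~\ref{lem-1} then gives $\E|X_t^{s,\mu}-X_t^{s,\mu'}|^2\leq \W_2(\mu,\mu')^2\e^{2(K_1+K_1^2)(T-s)}$ for all $t\in[s,T]$, and in particular $\W_2(\law_{X_t^{s,\mu}},\law_{X_t^{s,\mu'}})\leq C\,\W_2(\mu,\mu')$. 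Plugging this into the Lipschitz estimates on $\vartheta$ and $g$ from $\mathrm{(H_3)}$ — noting that in the feedback form the running cost is $\E[\vartheta(r,X_r,\law_{X_r},F(r,X_r,\law_{X_r}))]$, so both the state argument, the measure argument, and the control argument vary Lipschitz-continuously with the state and its law — yields $|J(s,\mu;\balpha)-J(s,\mu';\balpha)|\leq C\,\W_2(\mu,\mu')$. Taking the infimum and letting $\veps\to0$ controls $V(s,\mu)-V(s,\mu')$; the reverse inequality is symmetric.

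For the $\sqrt{|s-s'|}$-estimate, take $s<s'$ and a near-optimal $\balpha\in\Pi_{s',\mu}$ given by $F\in\wt\Pi$. One would like to view $F$ as producing an admissible control on $[s,T]$ and compare. The natural route is: run \eqref{b-1} on $[s,T]$ from $X_s$ with $\law_{X_s}=\mu$ using the feedback $F$, obtaining $(X_t^{s,\mu})$; the key observation (dynamic programming / flow property, or a direct argument) is that $V(s,\mu)\leq \E\big[\int_s^{s'}\vartheta\,\d r\big]+ J(s',\law_{X_{s'}^{s,\mu}};\tilde\balpha)$ for the induced control $\tilde\balpha$, while $\E\big[\int_s^{s'}\vartheta\,\d r\big]\leq K_2|s-s'|$ by boundedness of $\vartheta$. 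Combining with the already-established $\W_2$-Lipschitz continuity in the measure variable reduces matters to bounding $\W_2(\law_{X_{s'}^{s,\mu}},\mu)$, i.e. $\big(\E|X_{s'}^{s,\mu}-X_s|^2\big)^{1/2}$. A standard estimate on the reflected SDE over $[s,s']$ — using $\mathrm{(H_1)}$ to bound the drift, boundedness of $\oo$ to control the local time increment $k_{s'}-k_s$ (since $\oo$ is bounded with $C^{1,1}$ boundary, one has the classical bound $\E|k_{s'}-k_s|^2\leq C|s-s'|$, or one absorbs it using $\la X_r-X_s,\vec{\mathbf n}(X_r)\raa\,\d k_r$ and convexity), and the Brownian increment contributing $\|\sigma\|^2|s-s'|$ — gives $\E|X_{s'}^{s,\mu}-X_s|^2\leq C|s-s'|$, hence $\W_2(\law_{X_{s'}^{s,\mu}},\mu)\leq C\sqrt{|s-s'|}$. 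This yields $V(s,\mu)\leq V(s',\mu)+C\sqrt{|s-s'|}$; the opposite direction is obtained symmetrically by starting from a near-optimal control for $V(s,\mu)$, restricting it to $[s',T]$, and again comparing via the flow and the short-time moment estimate.

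The main obstacle I anticipate is the time-regularity direction, specifically the bookkeeping needed to legitimately pass a feedback control defined on one interval to the other and to control the local-time term $k_{s'}-k_s$ in the short-time moment estimate; the convexity of $\oo$ and the $C^{1,1}$ boundary are exactly what make this term manageable (via $\la \vec{\mathbf n}(X_r),X_s-X_r\raa\leq0$ as used already in Lemma~\ref{lem-1}, or via the known local-time moment bounds), but one must be careful that the comparison of value functions at different times is genuinely an inequality in the right direction and that the induced controls remain in $\wt\Pi$ with uniformly controlled constants. Everything else is a routine application of Gronwall, $\mathrm{(H_3)}$, and the infimum-difference inequality.
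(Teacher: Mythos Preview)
Your proposal is correct and follows essentially the same approach as the paper: choose a near-optimal feedback $F\in\wt\Pi$, extend it to the larger time interval, couple the initial conditions optimally in $\W_2$, apply Lemma~\ref{lem-1} for the stability estimate, and use a short-time moment bound $\E|X_{s'}-X_s|^2\le C|s'-s|$ together with the boundedness and Lipschitz properties in $\mathrm{(H_3)}$. The only organizational difference is that the paper handles the change from $(s',\mu')$ to $(s,\mu)$ in a single combined estimate (taking an $\veps$-optimal control for $V(s',\mu')$, extending $F^\veps$ backward by freezing it at time $s'$, and comparing directly), whereas you split via the triangle inequality into a pure $\mu$-step and a pure $s$-step; this makes no substantive difference, and your identification of the local-time term in the short-time estimate as the only point needing care is exactly right (the paper invokes this bound without further comment).
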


\begin{proof}
  Let $0\leq s<s'\leq T$. By the definition of $V(s',\mu')$, for any $\veps>0$ there exists a control $\balpha^\veps\in \Pi_{s', \mu'}$ such that
  \[J(s',\mu')\leq V(s',\mu')+\veps.\]
  Let $(X_t^\veps,k_t^\veps)$ be the associated controlled process to $\balpha^\veps$ with $X_{s'}^\veps=\xi'$ and $\law_{\xi}=\mu'$.
  Due to $\balpha^\veps\in \Pi_{s',\mu'}$, there exists $F^\veps:[0,T]\times\bar\oo\times\pb(\bar\oo)\to U$ in the class $\wt{\Pi}$ such that $\alpha^\veps_r=F^\veps(r,X_r^\veps,\law_{X_r^{\veps}})$ for $r\in [s',T]$.
  Let
  \[\wt F(r,x,\mu)=\begin{cases}
    F^\veps(s',x,\mu), \ &r\in [0,s'],\\
    F^\veps(r,x,\mu),\ &r\in [s',T].
  \end{cases}\]
  We can check directly that $\wt F\in \wt{\Pi}$.
  Consider the reflected SDE
  \begin{gather*}
    \wt X_t=\tilde \xi+\int_s^tb(r,\wt X_r,\law_{\wt X_r},\wt F(r, \wt X_r,\law_{\wt X_r}))\d r+\int_s^t\sigma(r)\d B_r-\int_s^t\vec{\mathbf{n}} (\wt X_r)\d \tilde k_r,\\
    \tilde k_t=\int_{s}^t\mathbf1_{\partial \oo}(\wt X_r)\d \tilde k_r.
  \end{gather*} Here the random variable $\tilde \xi\in \F_{s }$ is chosen so that $\law_{\tilde \xi}=\mu$ and  $\E|\tilde \xi-\xi'|^2=\W_2(\mu,\mu')^2$, whose existence is a result of the existence of optimal coupling of $\mu$ and $\mu'$ (cf. \cite{Vil}). Define
  \[\tilde \alpha_r=\wt F(r, \wt X_r,\law_{\wt X_r}), \quad r\in [s,T],\]
  then $\tilde \balpha=(\tilde \alpha_r)_{r\in [s,T]}$ is in $\Pi_{s,\mu}$.

  Due to  $\mathrm{(H_3)}$ and the compactness of $[0,T]$, $\bar\oo$, $\pb(\bar\oo)$ and $U$, we have $\vartheta$ and $g$ are bounded. Therefore, by Lemma \ref{lem-1},
  \begin{align*}
    &V(s,\mu)-V(s',\mu')\\
    &\leq \E\Big[\int_s^T\!\! \vartheta (r,\wt X_r,\law_{\wt X_r},\tilde \alpha_r)\d r\!+\!g(\wt X_T,\law_{\wt X_T})\Big]\!-\!\E\Big[\int_{s'}^T \!\! \vartheta (r,X_r^\veps,\law_{X_r^\veps},\alpha_r^\veps)\d r\!+\!g(X_T^\veps, \law_{X_T^\veps})\Big] \!+\!\veps\\
    &\leq \E\Big[\int_s^{s'} \vartheta (r,\wt X_r,\law_{\wt X_r},\tilde \alpha_r)\d r\Big]\! +\!2K_3\int_{s'}^T\!\!\big(\E  |\wt X_r \!-\!X_r^\veps|^2 \big)^{\frac 12}\d r\!+\!2K_3\big(\E|\wt X_T-X_T^\veps|^2\big)^{\frac 12}\!+\!\veps\\
    &\leq c_1 |s'-s|+c_1 \big(\E\big[|\wt X_{s'}-\xi|^2\big]\big)^{\frac 12} +\veps\\
    &\leq c_1|s'-s|+ c_1\big(\E[|\wt X_{s'}-\wt X_s|^2]\big)^{\frac 12}+c_1\big(\E[|\wt X_s-\xi|^2]\big)^{\frac 12}+\veps\\
    &\leq c_2\big(|s'-s|+\sqrt{|s'-s|} +\W_2(\mu,\mu')\big)+\veps ,
  \end{align*} where we have used $ \W_2(\law_{\wt X_r},\law_{X_r^\veps})\leq \big(\E|\wt X_r-X_r^\veps|^2\big)^{\frac 12}$, and $c_1$, $c_2$ are constants depending only on $K_3$, $T$ and the diameter of $\oo$.
  Therefore,
  \[V(s,\mu)-V(s',\mu')\leq c_3\big(\sqrt{|s'-s|}+\W_2(\mu,\mu')\big)+\veps,\]
  for some $c_3>0$. Letting $\veps\to 0$, we get the desired estimate of $V(s,\mu)-V(s',\mu')$. The estimate $V(s',\mu')-V(s,\mu)$ can be proved in a similar way. The proof is completed.
\end{proof}

\begin{myprop}[Dynamic programming principle]\label{prop-2}
Suppose that $\mathrm{(H_1)}$, $\mathrm{(H_2)}$ hold. Then, for any $0\leq s\leq t\leq T$, $\mu\in \pb(\bar\oo)$,
\begin{equation}\label{a-5}
V(s,\mu)=\inf_{\balpha\in \Pi_{s,\mu}} \Big\{ \E\Big[\int_s^t \vartheta (r, X_r^{s,\mu,\alpha},\law_{X_r^{s,\mu,\alpha}},\alpha_r)\d r+V(t,\law_{X_t^{s,\mu,\alpha}})\Big]\Big\},
\end{equation}
where for each $\balpha\in \Pi_{s,\mu}$, $(X_t^{s,\mu,\alpha})_{t\in [s,T]}$ stands for the corresponding controlled process with initial value $X^{s,\mu,\alpha}_s$ satisfying $\law_{X^{s,\mu,\alpha}_s}=\mu$.
\end{myprop}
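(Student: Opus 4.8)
The plan is to prove that $V$ equals the function $W(s,\mu)$ defined as the right-hand side of \eqref{a-5}, by establishing $V\le W$ and $V\ge W$ separately; the engine of both inclusions will be a flow identity for the reflected McKean-Vlasov dynamics under feedback controls. Fix $\balpha\in\Pi_{s,\mu}$ generated by $F\in\wt\Pi$, let $(X_r^{s,\mu})_{r\in[s,T]}$ be the controlled process with reflecting local time $(k_r)$, and write $\mu_r:=\law_{X_r^{s,\mu}}$. I would first observe that for $t\in[s,T]$ the triple $(X_r^{s,\mu},\,k_r-k_t,\,B_r-B_t)_{r\in[t,T]}$ is again a weak solution of \eqref{b-1} on $[t,T]$ with feedback $F$ and initial value $X_t^{s,\mu}$, since the equation is written over $[t,r]$ and $k_r-k_t=\int_t^r\mathbf1_{\partial\oo}(X_u^{s,\mu})\,\d(k_u-k_t)$ because $k$ grows only on $\partial\oo$. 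As $\law_{X_t^{s,\mu}}=\mu_t$, weak wellposedness of \eqref{b-1} — valid under $(\mathrm H_1)$ by \cite[Theorem 3.2]{Adams} (or \cite{Wang21}) — forces the marginal laws of $(X_r^{s,\mu})_{r\in[t,T]}$ to coincide with those of the canonical solution $(X_r^{t,\mu_t})_{r\in[t,T]}$ driven by the same $F$ from initial law $\mu_t$; in particular $\law_{X_r^{t,\mu_t}}=\mu_r$ for $r\in[t,T]$. Since a feedback cost of the present type depends on the control only through the marginal-law flow of the controlled process — one has $\E[\int\vartheta(r,X_r,\law_{X_r},F(r,X_r,\law_{X_r}))\,\d r]=\int\!\!\int\vartheta(r,x,\mu_r,F(r,x,\mu_r))\,\mu_r(\d x)\,\d r$, and similarly for the terminal term — I would extract two consequences: (i) with $\balpha'\in\Pi_{t,\mu_t}$ the canonical control generated by $F$ from $\mu_t$, one has $\E[\int_t^T\vartheta(r,X_r^{s,\mu},\mu_r,\alpha_r)\,\d r+g(X_T^{s,\mu},\mu_T)]=J(t,\mu_t;\balpha')$; (ii) conversely, given any $\balpha'\in\Pi_{t,\mu_t}$ generated by $F'\in\wt\Pi$, the concatenation
\[\wt F(r,x,\nu):=F(r,x,\nu)\,\mathbf1_{[s,t]}(r)+F'(r,x,\nu)\,\mathbf1_{(t,T]}(r)\]
belongs to $\wt\Pi$ (its $(x,\nu)$-Lipschitz constant is $\max\{C_F,C_{F'}\}$ and the integrability bound in \eqref{con-F} is inherited), so it generates an admissible control $\wt\balpha\in\Pi_{s,\mu}$ whose process $\wt X$ has $\law_{\wt X_r}=\mu_r$ on $[s,t]$ and, on $[t,T]$, the same marginal-law flow as $X^{t,\mu_t,\alpha'}$.

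Granting the flow identity, $V\ge W$ is immediate: for $\veps>0$ I would pick $\balpha\in\Pi_{s,\mu}$ with $J(s,\mu;\balpha)\le V(s,\mu)+\veps$, split
\[J(s,\mu;\balpha)=\E\Big[\int_s^t\vartheta(r,X_r^{s,\mu},\mu_r,\alpha_r)\,\d r\Big]+\E\Big[\int_t^T\vartheta(r,X_r^{s,\mu},\mu_r,\alpha_r)\,\d r+g(X_T^{s,\mu},\mu_T)\Big],\]
replace the second term by $J(t,\mu_t;\balpha')\ge V(t,\mu_t)$ via consequence (i), and deduce $V(s,\mu)+\veps\ge\E[\int_s^t\vartheta(r,X_r^{s,\mu},\mu_r,\alpha_r)\,\d r]+V(t,\mu_t)\ge W(s,\mu)$; letting $\veps\to0$ gives $V(s,\mu)\ge W(s,\mu)$.

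For $V\le W$, I would fix an arbitrary $\balpha\in\Pi_{s,\mu}$ and $\veps>0$, pick $\balpha'\in\Pi_{t,\mu_t}$ generated by some $F'\in\wt\Pi$ with $J(t,\mu_t;\balpha')\le V(t,\mu_t)+\veps$, and form $\wt F$ and $\wt\balpha$ as in consequence (ii). Because the running cost over $[s,t]$ is a functional of the marginal-law flow on $[s,t]$ alone — which is unchanged — while the cost over $[t,T]$ plus the terminal term equals $J(t,\mu_t;\balpha')$, one obtains
\[V(s,\mu)\le J(s,\mu;\wt\balpha)=\E\Big[\int_s^t\vartheta(r,X_r^{s,\mu},\mu_r,\alpha_r)\,\d r\Big]+J(t,\mu_t;\balpha')\le\E\Big[\int_s^t\vartheta(r,X_r^{s,\mu},\mu_r,\alpha_r)\,\d r\Big]+V(t,\mu_t)+\veps.\]
Taking the infimum over $\balpha\in\Pi_{s,\mu}$ and then $\veps\to0$ yields $V(s,\mu)\le W(s,\mu)$, which together with the previous step proves \eqref{a-5}. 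Since $\vartheta,g\ge0$, all quantities above lie in $[0,\infty]$ and every step is valid with the usual conventions, so no boundedness of $\vartheta,g$ is needed.

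The hard part will be the flow identity. Each of its two ingredients is standard in isolation: that the truncated reflected process together with the increment $k_r-k_t$ is again a weak solution on $[t,T]$ (which uses the additivity of the reflecting local time, i.e. that $k_r-k_t$ is the boundary local time of $(X_r)_{r\in[t,T]}$), and that weak uniqueness of \eqref{b-1} propagates ``equal initial law'' to ``equal marginal-law flow on $[t,T]$''. But it is precisely here that the measure dependence and the reflection must be controlled simultaneously: one must check that the marginal-law flow $(\mu_r)_{r\in[t,T]}$ carried by the restarted canonical solution is the same one carried by the original process, which again reduces to weak wellposedness of \eqref{b-1}. The only other point needing care is the admissibility check $\wt F\in\wt\Pi$ for the concatenation, which is immediate from \eqref{con-F}.
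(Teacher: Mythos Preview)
Your proof is correct and follows essentially the same approach as the paper: both directions rest on the flow property of the reflected McKean--Vlasov dynamics under feedback, together with the concatenation $\wt F=F\mathbf 1_{[s,t]}+F'\mathbf 1_{(t,T]}\in\wt\Pi$ to manufacture an admissible control on $[s,T]$ from one on $[s,t]$ and one on $[t,T]$. The only cosmetic difference is that the paper phrases the flow identity via pathwise (strong) uniqueness of \eqref{b-5}, whereas you argue through weak uniqueness and the observation that the feedback cost depends only on the marginal-law flow; both routes yield the same conclusion.
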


\begin{proof}
  For each $\balpha\in \Pi_{s,\mu}$, the wellposedness of reflected McKean-Vlasov \eqref{a-1} yields that the flow property holds:
  \[X_r^{s,\xi}= X_r^{t, X_t^{s,\xi}},\quad r\in [t,T],\ s\leq t.\]
  This assertion can be proved in the same way as the McKean-Vlasov equations without reflection; see, \cite[Section 3]{Buk}.

  Denote the right-hand side of \eqref{a-5} by $\wt V(s,\mu)$. Then, according to the definition of $V(s,\mu)$, for any $\veps>0$ there exists an admissible feedback control $\balpha\in \Pi_{s,\mu}$ such that
  \begin{align*}
    &V(s,\mu)\\&\geq \E\Big[\int_s^t\!\!\!\vartheta (r,X_r^{s,\mu,\alpha}, \law_{X_r^{s,\mu,\alpha}},\alpha_r)\d r\!+\!\!\int_t^T\!\!\!\vartheta (r,X_r^{s,\mu,\alpha}, \law_{X_r^{s,\mu,\alpha}},\alpha_r)\d r\!+\!g(X_T^{s,\mu,\alpha}, \law_{X_T^{s,\mu,\alpha}})\Big]\!-\!\veps\\
    &\geq \E\Big[\int_s^t \!\vartheta (r,X_r^{s,\mu,\alpha}, \law_{X_r^{s,\mu,\alpha}},\alpha_r)\d r +V(t,\law_{X_t^{s,\mu,\alpha}})\Big]-\veps\\
    &\geq \wt V(s,\mu)-\veps,
  \end{align*}
  where in the second inequality we have used the flow property of $(X_t^{s,\mu,\alpha})$. Letting $\veps\to 0$, we obtain that $V(s,\mu)\geq \wt V(s,\mu)$.

  For the inverse inequality, for any $\veps>0$, by the definition of $\wt V(s,\mu)$, there is a feedback control $\balpha\in \Pi_{s,\mu}$ corresponding to a function $F\in\wt \Pi$ such that
  \begin{equation}\label{b-3}\veps+\wt V(s,\mu)\geq \E\Big[\int_s^t \vartheta (r,X_r^{s,\mu,\alpha}, \law_{X_r^{s,\mu,\alpha}},\alpha_r)\d r+V(t,\law_{X_t^{s,\mu,\alpha}})\Big].
  \end{equation}
  By the definition of $V(t,\law_{X_t^{s,\mu,\alpha}})$, there exists a feedback control $\balpha'\in \Pi_{t,\law_{X_t^{s,\mu,\alpha}}}$ corresponding to a function $\wt F\in \wt\Pi$ such that
  \begin{equation}\label{b-4}
  \veps+V(t,\law_{X_t^{s,\mu,\alpha}})\geq \E\Big[\int_t^T\!\!
   \vartheta (r,X_r^{t,\nu_t},\law_{X_r^{t,\nu_t}},\alpha'_r)\d r+g(X_T^{t,\nu_t},\law_{X_T^{t,\nu_t}})\Big],
   \end{equation}
   where $\nu_t=\law_{X_t^{s,\mu,\alpha}}$.
   We define a new function $\hat F$ by
     \[\hat F(r,x,\mu)=F(r,x,\mu)\mathbf{1}_{r\leq t}+\wt{F}(r,x,\mu)\mathbf1_{t<r\leq T},\]
     and check directly that $\hat{F}\in \wt\Pi$.
   Then, corresponding to $\hat F$, consider the following SDE
   \begin{equation}\label{b-5}
   \d \hat{X}_r=b(r, \hat{X}_r, \law_{\hat{X}_r}, \hat{F}(r,\hat{X}_r,\law_{\hat{X}_r}))\d r+\sigma(r)\d B_r-\vec{\mathbf{n}}(\hat{X}_r)\d k_r
   \end{equation}
   with initial value $\hat{X}_s=\xi$ and $\law_{\xi}=\mu$. By the   uniqueness of solution to SDE \eqref{b-5}, it holds that $\hat{X}_r=X_r^{s,\mu,\alpha}$ for $r\in [s,t]$ and $\hat{X}_r= X_r^{t,\nu_t}$ for $r\in [t, T]$.
   Associated with $\hat{F}$, there is an admissible feedback control $\hat{\balpha} \in \Pi_{s,\mu}$ and $\hat{\balpha}$ satisfies
   \[\hat\alpha_r=\alpha_r\mathbf1_{s\leq r\leq t}+\alpha'_r\mathbf1_{t<r\leq T}.\]
   Then, invoking \eqref{b-3}, \eqref{b-4}, by the definition of $V(s,\mu)$,
   \begin{align*}
     2\veps+\wt V(s,\mu)&\geq \E\Big[\int_s^T\!\!\vartheta (r,\hat X_r , \law_{\hat X_r }, \hat \alpha_r)\d r\ +\!g(\hat X_T ,\law_{\hat X_T })\Big] \geq V(s,\mu).
   \end{align*}
   Letting $\veps\to 0$, we get $\wt V(s,\mu)\geq V(s,\mu)$. In all, we have shown $V(s,\mu)=\wt V(s,\mu)$ and the proof is completed.
\end{proof}

\section{Characterization of the value function: Existence of viscosity solution} \label{sec-3}

\subsection{Riemannain structure of the Wasserstein space}
In this subsection, we adopt the Riemannian interpretation of the Wasserstein space developed by Otto in \cite{Otto} to introduce a HJB equation on the Wasserstein space, and show that the value function is a viscosity solution to it. However, we defer the discussion on the uniqueness of viscosity solution to this HJB equation to Section \ref{sec-4}.

The tangent space, geodesics, and Ricci curvature can be developed on $\pb_2(\R^d):=\{\mu\in \pb(\R^d); \int_{\R^d}|x|^2\d \mu(x)<\infty\}$ endowed with the $L^2$-Wasserstein distance $\W_2$ based on the theory on  optimal transport maps between probability measures. See, for example, the monographs \cite{Amb} and \cite{Vil}. As $\bar\oo$ is bounded, it is clear that $\pb_2(\bar\oo)=\pb(\bar\oo)$.
As we are interested with the reflected stochastic processes on $\bar\oo$,  similar to  $\pb_2(\R^d)$, we consider the following Riemannian structure  of $\pb(\bar\oo)$.
For each $\mu\in \pb(\bar\oo)$, the tangent space at $\mu$ is defined by
\begin{equation}\label{d-1}
\mathscr{T}_\mu:=\big\{ v:\bar\oo\to \R^d\ \text{is measurable satisfying $\mu(|v|^2)<\infty$ and $\la A\nu,\vec{\mathbf{n}}\raa =0$ on $\partial \oo$}\big\},
\end{equation}
where $A=\sigma\sigma^\ast$, $\vec{\mathbf{n}}$ is the unit outward normal of $\oo$.
Then, $\mathscr{T}_\mu$ is a Hilbert space under the inner product
\[\la v,v\raa_{\mathscr{T}_\mu}=\|v\|_{\mathscr{T}_\mu}^2:=\mu(|v|^2) .\]

\begin{mydefn}\label{def-2}
Let $u:\pb(\bar\oo)\to \R$ be a continuous function, and $\mathrm{Id}$ denotes the identity map on $\R^d$. $u$ is said to be intrinsically differentiable at a point $\mu\in \pb(\bar\oo)$, if there is a linear functional $D^{L}u:\mathscr T_\mu\to \R$ such that
\[D_v^L u(\mu)=\lim_{\veps\downarrow 0} \frac{ u(\mu\circ(\mathrm{Id}+\veps v)^{-1})-u(\mu)}{\veps} , \quad v\in \mathscr{T}_\mu,\ \mu\in \pb(\bar\oo).\]
In this situation, the unique element $D^Lu(\mu)\in \mathscr{T}_\mu$ such that
\[ \la D^L u(\mu),v \raa_{\mathscr{T}_\mu}=\int_{\bar\oo}\! \la D^L u(\mu)(x),v(x)\raa \mu(\d x)=D_v^Lu(\mu),\quad v\in \mathscr{T}_\mu. \]
is called the intrinsic derivative of $u$ at $\mu$.

If, moreover,
\[\lim_{\|v\|_{\mathscr{T}_\mu}\to 0} \frac{ u(\mu\circ (\mathrm{Id}+v)^{-1})-u(\mu)-D_v^L u(\mu)}{\|v\|_{\mathscr{T}_\mu}} =0,
\]
then $u$ is called $L$-differentiable at $\mu$ with the $L$-derivative (i.e. Lions' derivative) $D^Lu(\mu)$.
\end{mydefn}

\begin{mydefn}\label{defn-3}
Let $\hat{\pb}$ be a subset of  $\pb(\bar\oo)$.
We write $u\in C_{L,b}^1(\hat\pb)$ if $u$ is Lipschitz continuous in $(\pb(\bar\oo),\W_2)$,  intrinsically differentiable at any point $\mu\in \hat\pb$, and its intrinsic derivative  $D^L u(\mu)(x)$ satisfies that
\begin{itemize}
\item[(i)] for each $\mu\in \hat\pb$, $x\mapsto D^L u(\mu)(x)$ is continuously differentiable;
\item[(ii)] $\sup\big\{|D^L u(\mu)(x)|+|\nabla_x D^L u(\mu)(x)|;\mu\in \hat{\pb}, x\in \bar\oo\big\}<\infty$;
\item[(iii)] $\mu\mapsto D^L u(\mu)(\cdot)$ is continuous from $\hat\pb$ to $L^1(\bar\oo)$ in the sense that if $\mu_n,\mu\in \hat\pb$ and $\W_2(\mu_n,\mu)\to 0$ as $n\to \infty$, then  for any $\veps>0$
    \[\mu_n\big(\{x\in \oo; |\nabla_x D^L\psi(t,\mu_n)(x)-\nabla_x D^L \psi(t,\mu)(x)|\geq \veps\}\big)\longrightarrow 0,\quad \text{as} \ n\to \infty .\]
\end{itemize}
For a function $\psi:[0,T]\!\times\!\hat{\pb}\to \R$, if   for each $\mu\in \hat\pb$, $\psi(\cdot,\mu)$ is continuously differential; for each $t\in [0,T]$,  $\psi(t,\cdot)\in C_{L,b}^1(\hat\pb)$, and
\[\|D^L\psi\|_{\infty}:=\sup\{|D^L\psi(t,\mu)(x)|; t\in [0,T],\mu\in \hat{\pb},x\in\bar\oo\}<\infty,\] we say that $\psi\in C_{L,b}^{1,1}([0,T]\!\times \!\hat\pb)$.
\end{mydefn}

\begin{mydefn}[Absolutely continuous curves]\label{def-4} A curve $\mu:(a,b)\to \pb(\bar\oo)$ is said to be in $AC^q(a,b)$ for $q\in [1,+\infty]$ and $a,b\geq 0$, if there exists $m\in L^q(a,b)$ such that
\[\W_2(\mu_s,\mu_t)\leq \int_s^tm(r)\,\d r,\qquad a<s<t<b.\]
In the case $q=1$ we denote the space simply by $AC(a,b)$ and we are dealing with absolutely continuous curves.

For an absolutely continuous curve $\mu:(a,b)\to \pb(\bar\oo)$ the limit
 \[|\mu'|(t):=\lim_{s\to t}\frac{\W_2(\mu_s,\mu_t)} {|s-t|}\]
 exists for $Leb$-a.e. $t\in (a,b)$, which is called the metric derivative of the curve $(\mu_t)$.
\end{mydefn}

Next, let us recall some results on the absolutely continuous curves in $\pb(\bar\oo)$ as a subspace of $\pb_2(\R^d)$, which can be proved in the same way as in $\pb_2(\R^d)$ with some necessary modifications.

\begin{mythm}[\cite{Amb}, Theorem 8.3.1]\label{Amb}
Let $\mu:[0,T]\to \pb(\bar\oo)$ be an absolutely continuous curve and let $|\mu'|\in L^1([0,T])$ be its metric derivative. Then there exists a Borel vector field $v:(t,x)\mapsto v_t(x)$ such that
\begin{equation}\label{d-2}
v_t\in L^2(\bar\oo\to\R^d;\mu_t),\quad \|v_t\|_{L^2(\bar\oo;\mu_t)}\leq |\mu'|(t)\quad \text{for a.e. $t\in [0,T]$,}
\end{equation}
and the continuity equation
\begin{equation}\label{d-3}
\partial_t\mu_t +\nabla \cdot(v_t\mu_t)=0\quad \ \text{in $[0,T]\times \bar\oo$}
\end{equation}
holds in the sense of distribution, i.e.
\begin{equation}\label{d-4}
\int_0^T\!\int_{\oo}\!\!\Big(\partial_t\psi(t,x)\!+\! \la v_t(x),\nabla_x\psi(t,x)\raa\Big)\d \mu_t(x)\d t =0\quad \forall\,\psi \in C_c^\infty((0,T)\times \oo),
\end{equation} where $C_c^\infty((0,T)\times \oo)$ stands for the set of smooth functions on $(0,T)\times\oo$ with compact support.

Conversely, if a continuous curve $\mu:[0,T]\to \pb(\bar\oo)$ satisfies the continuity equation \eqref{d-3} for some Borel velocity field $v_t$ with $\|v_t\|_{L^2(\bar\oo;\mu_t)}\in L^1([0,T])$, then
$\mu:[0,T]\to \pb(\bar\oo)$ is absolutely continuous and $|\mu'|(t)\leq \|v_t\|_{L^2(\bar\oo;\mu_t)}$ for $Leb$-a.e. $t\in [0,T]$.
\end{mythm}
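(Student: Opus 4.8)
The plan is to transcribe the proof of \cite[Theorem 8.3.1]{Amb} from $\pb_2(\R^d)$ to $\pb(\bar\oo)$, the point being that the extra structure of $\bar\oo$ --- bounded, convex, with $C^{1,1}$ boundary --- only simplifies the argument, apart from one boundary bookkeeping step singled out at the end. Three facts will be used throughout: (i) $\bar\oo$ is compact, so $(\pb(\bar\oo),\W_2)$ is compact and $\W_2$-convergence coincides with narrow convergence, which trivialises every tightness/compactness step; (ii) $\bar\oo$ is convex, hence geodesically convex in $\R^d$, so optimal couplings, Brenier maps (for absolutely continuous sources) and McCann displacement interpolants between elements of $\pb(\bar\oo)$ stay supported in $\bar\oo$ --- thus $\pb(\bar\oo)$ is a totally geodesic, convex subset of $\pb_2(\R^d)$, and in particular the Benamou--Brenier representation of $\W_2$ restricts to it; (iii) a routine density argument shows that \eqref{d-4}, tested against $C_c^\infty((0,T)\times\oo)$, is equivalent to $\frac{\d}{\d t}\int_{\bar\oo}\phi\,\d\mu_t=\int_{\bar\oo}\la\nabla\phi,v_t\raa\,\d\mu_t$ for all $\phi\in C^\infty(\bar\oo)$, because the curve keeps all of its mass in the fixed bounded set $\bar\oo$.

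For the converse implication I would run the mollification proof of \cite{Amb}: given the continuity equation with $\|v_t\|_{L^2(\mu_t)}\in L^1$, set $\mu_t^\veps:=\mu_t*\rho_\veps$ on a slightly larger bounded convex set; then $\partial_t\mu_t^\veps+\nabla\cdot(v_t^\veps\mu_t^\veps)=0$ with $v_t^\veps:=((v_t\mu_t)*\rho_\veps)/\mu_t^\veps$ and, by Jensen's inequality, $\|v_t^\veps\|_{L^2(\mu_t^\veps)}\le\|v_t\|_{L^2(\mu_t)}$. For the now smooth curve a direct computation --- estimating $\W_2(\mu_s^\veps,\mu_t^\veps)$ by the flow of $v^\veps$, as in \cite{Amb} --- gives $\W_2(\mu_s^\veps,\mu_t^\veps)\le\int_s^t\|v_r^\veps\|_{L^2(\mu_r^\veps)}\,\d r\le\int_s^t\|v_r\|_{L^2(\mu_r)}\,\d r$; letting $\veps\downarrow 0$ and using $\W_2(\mu_t^\veps,\mu_t)\le\veps$ (translation coupling) yields $\mu\in AC(0,T)$ and $|\mu'|(t)\le\|v_t\|_{L^2(\mu_t)}$ for a.e.\ $t$. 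Equivalently, since $(\mu_r,v_r)_{r\in[s,t]}$ is an admissible competitor in the Benamou--Brenier infimum on the convex set $\bar\oo$, the bound follows at once.

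For the direct implication I would use the differential-quotient construction of \cite{Amb}: at every Lebesgue point $t$ of the (a.e.\ finite) metric derivative $|\mu'|$ and for small $h>0$ pick an optimal plan $\gamma^t_h\in\C(\mu_t,\mu_{t+h})$ and set $w^t_h(x):=\tfrac1h\int_{\bar\oo}(y-x)\,\gamma^t_h(\d y\mid x)\in L^2(\mu_t;\R^d)$, so $\|w^t_h\|_{L^2(\mu_t)}\le\W_2(\mu_t,\mu_{t+h})/h$ stays bounded as $h\downarrow 0$; a weak limit $v_t$ then satisfies $\|v_t\|_{L^2(\mu_t)}\le|\mu'|(t)$ by lower semicontinuity of the norm, and passing to the limit in $\tfrac1h\int_{\bar\oo}(\psi(t{+}h,\cdot)-\psi(t,\cdot))\,\d\mu_t+\tfrac1h\iint(\psi(t{+}h,y)-\psi(t{+}h,x))\,\d\gamma^t_h$ against $\psi\in C_c^\infty((0,T)\times\oo)$ recovers \eqref{d-4}. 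The genuinely technical point, the joint Borel measurability of $(t,x)\mapsto v_t(x)$, I would resolve exactly as in \cite{Amb}: work with a countable dense family of test functions, use the uniform second-moment bound (automatic since $\bar\oo$ is bounded) to extract a single limiting measure $\bm\gamma$ on $(0,T)\times\bar\oo\times\R^d$, disintegrate it successively in the time variable and in the first space variable, and take $v_t(x)$ to be the barycenter of the resulting conditional measure; this field is Borel, satisfies the continuity equation, and Jensen's inequality applied to the disintegration gives $\|v_t\|_{L^2(\mu_t)}\le|\mu'|(t)$.

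I expect the step demanding the most care to be the interaction with the boundary $\partial\oo$: because \eqref{d-4} only sees test functions supported in the open set $\oo$, one must check that neither the spatial mollification in the converse part nor the barycentric field in the direct part creates a spurious flux across $\partial\oo$ in the limit $\veps\downarrow 0$ or $h\downarrow 0$. This is precisely where convexity of $\oo$ enters: all the approximating objects (optimal couplings, displacement interpolants, mollified curves) keep their mass inside $\bar\oo$, or inside a fixed slightly larger bounded convex set that shrinks to $\bar\oo$, and the $C^{1,1}$ regularity of $\partial\oo$ makes the outward normal $\vec{\mathbf n}(\cdot)$ Lipschitz so that the reflection/tangency computations go through. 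Granting this, every remaining estimate is a verbatim copy of the one in \cite{Amb}.
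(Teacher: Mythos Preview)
The paper does not prove this theorem at all: it is stated as a citation of \cite[Theorem 8.3.1]{Amb}, preceded only by the remark that results on absolutely continuous curves in $\pb(\bar\oo)$ ``can be proved in the same way as in $\pb_2(\R^d)$ with some necessary modifications.'' Your proposal is fully consistent with that one-line justification and in fact supplies far more detail than the paper does, correctly identifying convexity and compactness of $\bar\oo$ as the features that make the transcription from $\pb_2(\R^d)$ routine.
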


\begin{myprop}[\cite{Amb}, Theorem 8.4.6]\label{prop-1.5}
Let $\mu:[0,T]\to \pb(\bar\oo)$ be absolutely continuous, and let $v_t\in \mathscr{T}_{\mu_t}$ be such that \eqref{d-2}, \eqref{d-3} hold. Then
\begin{equation}\label{d-5}
\lim_{\veps\downarrow 0}\frac{\W_2(\mu_{t+\veps},\mu_t\circ (\mathrm{Id} +\veps v_t)^{-1})}{\veps}=0.
\end{equation}
\end{myprop}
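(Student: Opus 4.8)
The plan is to adapt to the bounded convex domain $\bar\oo$ the proof of \cite[Theorem~8.4.6]{Amb}. The governing remark is that the crude splitting $\W_2(\mu_{t+\veps},\mu_t\circ(\mathrm{Id}+\veps v_t)^{-1})\le\W_2(\mu_{t+\veps},\mu_t)+\W_2(\mu_t,\mu_t\circ(\mathrm{Id}+\veps v_t)^{-1})$ is of no use here: each summand is only $\le\veps|\mu'|(t)+o(\veps)$, so it yields an $O(\veps)$ bound while \eqref{d-5} demands $o(\veps)$; a genuine cancellation must be produced. Two preliminary reductions set the stage: (i) up to an arc-length reparametrization of the curve (under which the left side of \eqref{d-5} divided by $\veps$ transforms by the a.e.\ finite, nonzero derivative of the reparametrization, and where the statement is anyway trivial on intervals on which the curve is constant) we may assume $|\mu'|$, hence $r\mapsto\|v_r\|_{L^2(\bar\oo;\mu_r)}$, is bounded; (ii) hypothesis \eqref{d-2} combined with the converse part of Theorem~\ref{Amb} forces $\|v_t\|_{L^2(\bar\oo;\mu_t)}=|\mu'|(t)$ for a.e.\ $t$, so $v_t$ is the \emph{minimal} velocity field solving \eqref{d-3}, i.e.\ the tangent vector to the curve at $\mu_t$; in particular $v_t$ lies in the $L^2(\bar\oo;\mu_t)$-closure of $\{\nabla\varphi:\varphi\in C_c^\infty(\oo)\}$, and it is this tangency that will power the last step.

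Next I would invoke the superposition principle for the continuity equation \eqref{d-3}: there is a probability measure $\eta$ on $C([0,T];\bar\oo)$, concentrated on absolutely continuous curves $\gamma$ with $\dot\gamma(r)=v_r(\gamma(r))$ a.e., such that $(e_r)_\sharp\eta=\mu_r$ for all $r$, where $e_r(\gamma)=\gamma(r)$. On a bounded convex domain this needs only small adjustments of the classical $\R^d$ statement: boundedness of $\oo$ forbids escape of mass, and the no-flux condition built into $\mathscr T_{\mu_r}$ keeps the characteristics in $\bar\oo$. The map $\gamma\mapsto(\gamma(t)+\veps v_t(\gamma(t)),\gamma(t+\veps))$ pushes $\eta$ forward to a coupling of $\mu_t\circ(\mathrm{Id}+\veps v_t)^{-1}$ and $\mu_{t+\veps}$, so, using $\gamma(t+\veps)-\gamma(t)=\int_t^{t+\veps}v_r(\gamma(r))\,\d r$ together with Cauchy--Schwarz and Fubini,
\[
\W_2\big(\mu_{t+\veps},\mu_t\circ(\mathrm{Id}+\veps v_t)^{-1}\big)^2\le\veps\int_t^{t+\veps}\Big(\int_{C([0,T];\bar\oo)}\big|v_r(\gamma(r))-v_t(\gamma(t))\big|^2\,\eta(\d\gamma)\Big)\,\d r .
\]
Writing $V_r:=v_r\circ e_r\in L^2(\eta;\R^d)$, so that $\|V_r\|_{L^2(\eta)}^2=\|v_r\|_{L^2(\bar\oo;\mu_r)}^2$, it suffices to prove $\tfrac1\veps\int_t^{t+\veps}\|V_r-V_t\|_{L^2(\eta)}^2\,\d r\to0$ as $\veps\downarrow0$ for a.e.\ $t$.

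Expand $\|V_r-V_t\|_{L^2(\eta)}^2=\|V_r\|_{L^2(\eta)}^2-2\la V_r,V_t\raa_{L^2(\eta)}+\|V_t\|_{L^2(\eta)}^2$. Since $r\mapsto\|V_r\|_{L^2(\eta)}^2\in L^1([0,T])$ after reduction (i), the norm terms cause no trouble at Lebesgue points, and the task becomes $\tfrac1\veps\int_t^{t+\veps}\la V_r-V_t,V_t\raa_{L^2(\eta)}\,\d r\to0$. Fix $\delta>0$ and pick $\varphi\in C_c^\infty(\oo)$ with $\|v_t-\nabla\varphi\|_{L^2(\bar\oo;\mu_t)}<\delta$; replacing the second slot $V_t$ by $\nabla\varphi\circ e_t$ costs at most $\delta(\|V_r\|_{L^2(\eta)}+\|V_t\|_{L^2(\eta)})$ by Cauchy--Schwarz, leaving $\la V_r,\nabla\varphi\circ e_t\raa_{L^2(\eta)}-\la V_t,\nabla\varphi\circ e_t\raa_{L^2(\eta)}$. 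The last term equals $\int_{\bar\oo}\la v_t,\nabla\varphi\raa\,\d\mu_t$; in the first, replacing $\nabla\varphi\circ e_t$ by $\nabla\varphi\circ e_r$ costs at most $\mathrm{Lip}(\nabla\varphi)\,\|\gamma(t)-\gamma(r)\|_{L^2(\eta)}\,\|V_r\|_{L^2(\eta)}$, with $\|\gamma(t)-\gamma(r)\|_{L^2(\eta)}^2\le\veps\int_t^{t+\veps}\|V_s\|_{L^2(\eta)}^2\,\d s$, and leaves $\int_{\bar\oo}\la v_r,\nabla\varphi\raa\,\d\mu_r$, which by \eqref{d-4} equals $\tfrac{\d}{\d r}\int_{\bar\oo}\varphi\,\d\mu_r$ and so is an $L^1$ function of $r$ whose $[t,t+\veps]$-average tends, at Lebesgue points, to $\int_{\bar\oo}\la v_t,\nabla\varphi\raa\,\d\mu_t$. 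The two limits cancel; intersecting the Lebesgue-point sets over a countable dense family of such $\varphi$ (and that of $r\mapsto\|V_r\|_{L^2(\eta)}^2$) yields a full-measure set of $t$ on which $\limsup_{\veps\downarrow0}\big|\tfrac1\veps\int_t^{t+\veps}\la V_r-V_t,V_t\raa_{L^2(\eta)}\,\d r\big|\le C\delta$ for every $\delta>0$, hence vanishes; combined with the coupling estimate above this gives \eqref{d-5}.

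The step I expect to be the main obstacle is precisely this cross-term convergence $\tfrac1\veps\int_t^{t+\veps}\la V_r,V_t\raa_{L^2(\eta)}\,\d r\to\|V_t\|_{L^2(\eta)}^2$: the mismatch of time arguments inside $\la v_r(\gamma(r)),v_t(\gamma(t))\raa$ cannot be removed by soft compactness, and is dissolved only by pairing the minimality (tangency) of $v_t$ with the identification of $r\mapsto\int_{\bar\oo}\la v_r,\nabla\varphi\raa\,\d\mu_r$ as the a.e.\ derivative of the absolutely continuous map $r\mapsto\int_{\bar\oo}\varphi\,\d\mu_r$ via \eqref{d-4}. By contrast, the passage from $\R^d$ to the bounded convex domain $\bar\oo$ should cost only the routine checks that the superposition principle and the weak formulation \eqref{d-4} (tested against $C_c^\infty(\oo)$) remain valid there, the behaviour on $\partial\oo$ being absorbed into the definition of $\mathscr T_\mu$.
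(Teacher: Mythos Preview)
The paper does not supply its own proof of this proposition: it is quoted verbatim as \cite[Theorem~8.4.6]{Amb} and used as a black box. So there is no ``paper's proof'' to compare against beyond the reference itself.

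Your proposal is a faithful sketch of the Ambrosio--Gigli--Savar\'e argument, and the main line (superposition principle, coupling via $\gamma\mapsto(\gamma(t)+\veps v_t(\gamma(t)),\gamma(t+\veps))$, expansion of $\|V_r-V_t\|_{L^2(\eta)}^2$, then killing the cross term by gradient approximation and Lebesgue points) is correct. Two remarks on the adaptation. First, note that the paper's tangent space $\mathscr{T}_\mu$ in \eqref{d-1} is \emph{not} the usual optimal-transport tangent space (closure of smooth gradients) but the much larger space of all $L^2$ fields with the Neumann-type boundary condition; your reduction (ii) is therefore doing real work, since only after invoking minimality via \eqref{d-2} do you recover that $v_t$ lies in the gradient closure and can be approximated by $\nabla\varphi$. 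Second, the sentence ``intersecting the Lebesgue-point sets over a countable dense family of such $\varphi$'' is slightly loose: the approximant $\varphi$ depends on $t$ through $v_t$, so a fixed countable family does not directly give $\|v_t-\nabla\varphi\|<\delta$ for every $t$. The standard fix (as in \cite{Amb}) is to show that $t\mapsto\int\la v_t,\nabla\varphi\raa\,\d\mu_t$ is $L^1$ for each $\varphi$ in a countable dense set and take the common Lebesgue points; at such a $t$ one then chooses $\varphi$ from that set to approximate $v_t$. This is routine, but worth stating carefully.
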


It is easy to show that the curve $(\law_{X_t})_{t\in [s,T]}$ of the controlled process $(X_t)_{t\in [s,T]}$ is an absolutely continuous curve in $\pb(\bar\oo)$ under the coefficients conditions $\mathrm{(H_1)}$, $\mathrm{(H_2)}$.  However, it is a hard work to describing  the tangent vector field of $(\law_{X_t})_{t\in [s,T]}$ in $\pb(\bar\oo)$.  It is the basis to characterize the value function via the theory of HJB equation. In this work, in order to describe the tangent vector field along the distribution curve of the controlled process, we need to restrict the initial distribution to be in the  regular subspace $ \pb^r(\bar\oo)$ defined in \eqref{b-0} of $\pb(\bar\oo)$.

Let $\mu_0\in \pb^r(\bar\oo)$, and $\xi$ is a random variable in $\F_0$ with $\law_\xi=\mu_0$. For $\balpha\in \Pi_{0,\mu_0}$, denote $(X_t^{0,\mu_0},k_t^{0,\mu_0})_{t\in [0,T]}$ its associated controlled process satisfying \eqref{a-1}. Under the nondegenerate condition $\mathrm{(H_2)}$, the law of $X_t^{0,\mu_0}$ admits a density $\rho_t(x)$, which satisfies the nonlinear Fokker-Planck  equation:
\begin{equation}\label{d-6}
\begin{cases}
  \partial_t\rho_t(x)= \mathcal{L}^\ast_\alpha\rho_t(x)
  ,\quad & x\in \oo, t\in (0,T),\\
  \la A\nabla  \rho_t(x), \vec{\mathbf{n}} (x)\raa=0,\qquad \qquad &x\in \partial \oo, t\in (0,T).
\end{cases}
\end{equation}
where
\[ \mathcal{L}^\ast_\alpha\rho_t(x)=\frac 12\sum_{i,j=1}^d a_{ij}\partial^2_{ x_i x_j} \rho_t(x) -\sum_{i=1}^d \partial_{ x_i}\big(b_i(t,x,\rho_t(x)\d x,\alpha_t)\rho_t(x)\big).\]

Using the decoupling method, via fixing the distribution of the process $(X_t)$, the controlled process $(X_t,k_t)$ can be viewed as a solution to the following SDE
\begin{equation*}
\begin{cases}
  \d \wt X_t=b(t,\wt X_t,\mu_t,\alpha_t)\d t+\sigma \d B_t-\vec{\mathbf{n}}(\wt X_t)\d \tilde k_t,\\
  \tilde k_t=\int_0^t\mathbf1_{\partial \oo}(\wt X_s)\d \tilde k_s,
\end{cases},
\end{equation*}
where $\mu_t=\law_{X_t}$ is fixed by the unique solution of  SDE \eqref{a-1}. In particular, the law of $X_t$ coincides with that of $\wt X_t$. Let $p(s,x; t,y)$ be the transition probability of the process $(\wt X_t)$, which is a fundamental solution of a parabolic equation with Neumann boundary condition. There is a large number of literatures on the estimates of fundamental solutions to parabolic equations with Dirichlet boundary condition or Neumann boundary condition; see, for instance, \cite{Aro,BH,Chou,Fri,YZ,Zh96,Zh97} and references therein. In particular,  \cite{YZ} generalized the work \cite{Zh97} to the time-homogeneous parabolic equation with mixed boundary condition. \cite{Chou} deals with time-inhomogeneous parabolic equations with Neumann  boundary. Under $(\mathrm{H_1})$, the drift $b$ admits a bound $M$ determined by $K_1$ and the diameters of $\oo$ and $U$.   Then, the Gaussian type estimates hold for $p(s,x;t,y)$. Namely, there exist constants $\kappa_1,\,\kappa_2>0$, depending on $T$, such that
\begin{equation}\label{heat}
 \begin{aligned}
  &\frac{1}{\kappa_1 (t-s)^{d/2}} \exp\Big(- \frac{|y-x|^2}{\kappa_2(t-s)}\Big)\leq p(s,x;t, y)\leq \frac{\kappa_1}{(t-s)^{d/2}}\exp\Big(-\kappa_2 \frac{|y-x|^2}{t-s}\Big),\\
   &|\partial_t p(s,x;t, y)|\leq \frac{\kappa_1}{(t-s)^{(d+2)/2}}\exp\Big(-\kappa_2 \frac{|y-x|^2}{t-s}\Big), \qquad x,y\in \bar\oo, 0\leq s<t\leq T.
 \end{aligned}
\end{equation}
Furthermore, the density $\rho_t(x)$ of $\law_{X_t}$ can be represented by
\begin{equation}\label{rep}
\rho_t(x)=\int_{\oo} p(s,z;t,x)\mu_0(\d z),\quad t>s.
\end{equation}
Consequently,   under the nondegenerate condition $\mathrm{(H_2)}$,  the distribution of the solution $X_t$  to  SDE \eqref{a-1} will always stay in $\pb^r(\bar\oo)$.
This makes it feasible to characterize the value function as a viscosity solution to certain  HJB equation on $\pb^r(\bar\oo)$.

\begin{mythm}[Characterization of tangent vector fields: regular case] \label{thm-3.6}
Assume $\mathrm{(H_1)}$ and $\mathrm{(H_2)}$ hold.  Let $(X_t, k_t)_{t\in [0,T]}$ be a solution to \eqref{a-1} associated with a feedback control $\balpha \in \Pi_{0,\mu_0}$ in the form $\alpha_t=F(t,X_t,\law_{X_t})$ and $\law_{X_0}=\mu_0 \!\in \! \pb^r(\bar\oo)$.  Then,
\begin{itemize}
\item[$\mathrm{(i)}$] $[0,T]\ni t\mapsto \mu_t:=\law_{X_t}$ is an absolutely continuous curve in $\pb^r(\bar\oo)$. Its associated velocity field $v_t$ satisfying \eqref{d-2}, \eqref{d-3} is given by
    \begin{equation}\label{d-7}
    v_t(x )=\sum_{i=1}^d\Big( b_i(t,x,\mu_t,F(t,x,\mu_t))\!-\!\frac12 \sum_{j=1}^d \frac{a_{ij} \partial_{x_j}  \rho_t(x) } {\rho_t(x)}\Big)\mathbf{e}_i
    \end{equation}
    where  $\rho_t(x)=\frac{\d \mu_t(x)}{\d x}$ denotes the density of $ \mu_t $; $\{\mathbf{e}_1,\ldots,\mathbf{e}_d\}$ is the canonical orthonormal basis of $\R^d$.
\item[$\mathrm{(ii)}$] Let  $u\in C_{L,b}^{1}(\pb^r(\bar\oo))$, then
\begin{equation}\label{d-8}
\begin{split}
 \frac{\d u(\mu_t)}{\d t}& =D_{v_t}^L u(\mu_t)\\
 &=\int_{\bar\oo}\la b(t,x,\mu_t,\alpha_t), D^L u(\mu_t)(x)\raa \d\mu_t(x)\!-\!\frac 12\int_{\bar\oo}\! \la D^L u(\mu_t)(x), A\nabla_x\rho_t(x)\raa \d x\\
 &=\int_{\bar\oo}\!\la b(t,x,\mu_t,\alpha_t),D^L u(\mu_t)(x)\raa \d \mu_t(x)\!+\!\frac12 \int_{\bar \oo}\!\mathrm{tr}\big(A\nabla_x D^L u(\mu_t)(x)\big)\d \mu_t(x).
  \end{split}
\end{equation}
\end{itemize}
\end{mythm}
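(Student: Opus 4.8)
The plan is to establish part $\mathrm{(i)}$ first by identifying the velocity field from the Fokker--Planck equation, then deduce part $\mathrm{(ii)}$ as a consequence via the chain rule for intrinsically differentiable functions combined with Proposition~\ref{prop-1.5}. For part $\mathrm{(i)}$: from the discussion preceding the theorem, the representation \eqref{rep} together with the heat kernel bounds \eqref{heat} shows that $\rho_t \in \pb^r(\bar\oo)$ for every $t>0$, and that $t\mapsto \rho_t(x)$ is differentiable with the bound on $|\partial_t p|$ giving control on $\partial_t \rho_t$. First I would write the nonlinear Fokker--Planck equation \eqref{d-6} in divergence form: since $\mathcal{L}^\ast_\alpha \rho_t(x) = -\nabla\cdot\big( b(t,x,\mu_t,F(t,x,\mu_t))\rho_t(x) - \tfrac12 A\nabla\rho_t(x)\big)$, defining $v_t$ by \eqref{d-7} — equivalently $v_t(x)\rho_t(x) = b(t,x,\mu_t,F(t,x,\mu_t))\rho_t(x) - \tfrac12 A\nabla_x\rho_t(x)$ — we get $\partial_t\rho_t + \nabla\cdot(v_t\rho_t) = 0$ pointwise on $\oo$, which is the continuity equation \eqref{d-3}. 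To apply Theorem~\ref{Amb} in the converse direction I need to check: (a) that $v_t \in \mathscr{T}_{\mu_t}$, i.e. $\mu_t(|v_t|^2)<\infty$ — this follows from $\rho_t \in C^1(\bar\oo)$ with $\rho_t>0$ on the compact set $\bar\oo$, boundedness of $b$ under $\mathrm{(H_1)}$, and continuity of $\nabla\rho_t$; and (b) the boundary condition $\langle A v_t,\vec{\mathbf{n}}\rangle = 0$ on $\partial\oo$ in the weak/distributional sense, which I would verify by combining the Neumann condition $\langle A\nabla\rho_t,\vec{\mathbf{n}}\rangle=0$ from \eqref{d-6} with the fact that the reflection direction is $-\vec{\mathbf{n}}$, and by testing \eqref{d-4} against functions not compactly supported in $\oo$ using integration by parts — the boundary terms must cancel, which is precisely where the reflecting boundary condition enters. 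Then $\|v_t\|_{L^2(\bar\oo;\mu_t)} \in L^1([0,T])$ follows from the uniform bounds, so Theorem~\ref{Amb} yields absolute continuity of $(\mu_t)$, and since the continuity equation determines $v_t$ uniquely up to the projection onto the tangent space $\mathscr{T}_{\mu_t}$ (the "gradient" part), the formula \eqref{d-7} gives the canonical velocity field.

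For part $\mathrm{(ii)}$: given $u \in C^1_{L,b}(\pb^r(\bar\oo))$, the first equality $\frac{\d u(\mu_t)}{\d t} = D^L_{v_t}u(\mu_t)$ is the chain rule. I would prove it by writing, for small $\veps>0$,
\[
u(\mu_{t+\veps}) - u(\mu_t) = \big(u(\mu_{t+\veps}) - u(\mu_t\circ(\mathrm{Id}+\veps v_t)^{-1})\big) + \big(u(\mu_t\circ(\mathrm{Id}+\veps v_t)^{-1}) - u(\mu_t)\big).
\]
The second bracket divided by $\veps$ converges to $D^L_{v_t}u(\mu_t)$ by the definition of intrinsic differentiability (Definition~\ref{def-2}); the first bracket divided by $\veps$ is controlled by the Lipschitz constant of $u$ in $\W_2$ times $\W_2(\mu_{t+\veps},\mu_t\circ(\mathrm{Id}+\veps v_t)^{-1})/\veps$, which tends to $0$ by Proposition~\ref{prop-1.5}. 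This gives the right-derivative; a symmetric argument (or an argument using $\mu_{t-\veps}$) gives the left-derivative, and since $u(\mu_t)$ is absolutely continuous in $t$ the two-sided derivative exists for a.e. $t$. The second equality in \eqref{d-8} is obtained by substituting the representation of the intrinsic derivative, $D^L_{v_t}u(\mu_t) = \int_{\bar\oo}\langle D^L u(\mu_t)(x), v_t(x)\rangle\,\mu_t(\d x)$, and plugging in $v_t\rho_t = b\rho_t - \tfrac12 A\nabla\rho_t$: the $b$-term gives $\int\langle b, D^L u(\mu_t)\rangle\,\d\mu_t$ directly, and the remaining term is $-\tfrac12\int_{\bar\oo}\langle D^L u(\mu_t)(x), A\nabla_x\rho_t(x)\rangle\,\d x$. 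Finally, the third equality is integration by parts: $-\tfrac12\int_{\bar\oo}\langle D^L u(\mu_t)(x), A\nabla_x\rho_t(x)\rangle\,\d x = \tfrac12\int_{\bar\oo}\nabla_x\cdot\!\big(A^\ast D^L u(\mu_t)(x)\big)\rho_t(x)\,\d x = \tfrac12\int_{\bar\oo}\mathrm{tr}\big(A\nabla_x D^L u(\mu_t)(x)\big)\,\d\mu_t(x)$, using symmetry of $A$ and the fact that the boundary term $\tfrac12\int_{\partial\oo}\langle D^L u(\mu_t)(x), A\nabla\rho_t(x)\rangle\rho_t^{-1}\,\cdots$ — more carefully, the boundary term $\tfrac12\int_{\partial\oo}\rho_t\langle D^L u(\mu_t), A\vec{\mathbf{n}}\rangle$-type contribution — vanishes because either $\langle A\vec{\mathbf{n}},D^Lu(\mu_t)\rangle$ or the relevant flux vanishes on $\partial\oo$; here the condition $\langle A v,\vec{\mathbf{n}}\rangle=0$ built into $\mathscr{T}_\mu$ in \eqref{d-1} is exactly what makes the boundary integral drop out. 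The regularity hypotheses in Definition~\ref{defn-3}, parts (i)--(ii), guarantee $D^L u(\mu_t)(\cdot) \in C^1(\bar\oo)$ with uniform bounds, so all these integrations by parts are justified.

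The main obstacle I anticipate is the careful treatment of the boundary $\partial\oo$ throughout — specifically, making precise in what sense the continuity equation \eqref{d-3}/\eqref{d-4} holds up to the boundary for the reflected process, and showing that the candidate velocity field $v_t$ in \eqref{d-7} genuinely lies in the tangent space $\mathscr{T}_{\mu_t}$ as defined in \eqref{d-1} (i.e. satisfies $\langle A v_t,\vec{\mathbf{n}}\rangle = 0$ on $\partial\oo$). The Neumann condition in \eqref{d-6} is stated for $\nabla\rho_t$, but $v_t$ also contains the drift term $b$, so the normal component $\langle A v_t,\vec{\mathbf{n}}\rangle = \langle b,\vec{\mathbf{n}}\rangle\rho_t - \tfrac12\langle A\nabla\rho_t,\vec{\mathbf{n}}\rangle$ need not vanish pointwise; rather, what holds is that the \emph{flux} $\langle v_t\rho_t,\vec{\mathbf{n}}\rangle$ must be compatible with mass conservation on $\bar\oo$ (no mass escapes through $\partial\oo$, reflection being instantaneous), so it is the distributional formulation and the projection of $v_t$ onto $\mathscr{T}_{\mu_t}$ that reconciles \eqref{d-7} with membership in the tangent space. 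I would handle this by deriving the weak form of \eqref{d-6} against test functions $\psi\in C^\infty(\bar\oo)$ (not vanishing on $\partial\oo$) directly from the SDE \eqref{a-1} via It\^o's formula applied to $\psi(X_t)$, where the local-time term $-\int\langle\vec{\mathbf{n}}(X_r),\nabla\psi(X_r)\rangle\,\d k_r$ appears; the orthogonality structure of $\mathscr{T}_{\mu_t}$ and the reflection geometry then ensure that $v_t$ as defined is the correct tangent representative. Everything else — the chain rule, the integrations by parts, the moment bounds — is routine given the heat kernel estimates \eqref{heat} and the regularity in Definition~\ref{defn-3}.
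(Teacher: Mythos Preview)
Your proposal is correct and follows essentially the same route as the paper: Fokker--Planck/It\^o to identify $v_t$, Proposition~\ref{prop-1.5} plus Lipschitz continuity of $u$ for the chain rule, and Green's formula (using $\langle A\, D^Lu(\mu),\vec{\mathbf n}\rangle=0$ on $\partial\oo$) for the last equality in \eqref{d-8}. The only noteworthy difference is the ordering in part~(i): the paper first proves absolute continuity of $(\mu_t)$ \emph{directly} from the SDE via the crude bound $\W_2(\mu_t,\mu_s)^2\le \E|X_t-X_s|^2\le C(|t-s|^2+|t-s|)$, then invokes the forward direction of Theorem~\ref{Amb} to obtain \emph{some} velocity field, and finally identifies it by testing against $\psi(t,x)=\beta(t)h(x)$ with $h\in C_c^\infty(\oo)$; you instead define the candidate $v_t$ first and use the converse direction of Theorem~\ref{Amb}. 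The paper's ordering sidesteps the boundary worry you flag --- since the test functions are compactly supported in $\oo$, no boundary terms ever appear in the identification step, so the question of whether the drift contribution to $\langle A v_t,\vec{\mathbf n}\rangle$ vanishes on $\partial\oo$ never needs to be resolved explicitly.
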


\begin{proof}
  (i) By $\mathrm{(H_1)}$ and Lemma \ref{lem-1},  we have that for $0\leq s<t\leq T$
  \begin{align*}
\W_2&(\mu_t,\mu_s)^2\leq \E|X_t-X_s|^2\\
    &\leq C\Big(\E \Big|\int_s^t\!b(r,X_r,\law_{X_r},\alpha_r)\d r\Big|^2+\E\big| \sigma   ( W_t-W_s)\big|^2\Big)\\
    &\leq C\Big((t-s) \E\int_s^t\!(1+|X_r|^2)\d r+  \|\sigma\|^2(t-s) \Big)\\
    &\leq C\big(|t-s|^2+|t-s| \big)
  \end{align*} for a generic positive constant $C$ whose value may change from line to line. Therefore, $t\mapsto\mu_t$ is absolutely continuous in $\pb(\bar\oo)$. Theorem \ref{Amb} implies that there exists a velocity $v_t$ such that \eqref{d-2} and \eqref{d-3} hold.


  Due to \eqref{d-4}, for any $\psi(t,x)=\beta(t)h(x)\in C_c^\infty((0,T)\times \bar\oo)$,
  \begin{align*}
    &\int_0^T\!\int_{\oo}\!\big(\beta'(t)h(x)\!+\!\la v_t(x),\beta(t)\nabla_x h(x)\raa\big)\d \mu_t(x)\d t\\
    &=\int_0^T\!\Big(\beta'(t)\E h(X_t)\!+\! \beta(t)\E\big[\la v_t(X_t),\nabla_x h(X_t)\raa\big]\Big)\d t=0.
  \end{align*}
  This yields
  \begin{equation}\label{d-9} \int_0^T\beta(t)\frac{\d}{\d t}\E h(X_t)\,\d t= -\int_0^T\beta'(t)\E h(X_t)\, \d t=\int_0^T\!\beta(t)\E\big[\la v_t(X_t),\nabla_x h(X_t)\raa\big] \d t.
  \end{equation}
  Applying It\^o's formula and then Green's formula, for $h\in C_c^\infty(\oo)$, we have
  \begin{align*}
\frac{\d }{\d t}\E h(X_t)&=\E\big[\la b(t,X_t,\mu_t,\alpha_t),\nabla_x h(X_t)\raa \!+\! \frac 12 \mathrm{tr}\big(A \nabla_x^2 h(X_t) \big )\big]\\
    &=\E\big[\la b(t,X_t,\mu_t,\alpha_t),\nabla_x h(X_t)\raa\big] -\frac12\E\Big[\sum_{i=1}^d\frac{\sum_{j=1}^d\!a_{ij} \partial_{x_j} \rho_t(X_t) }{\rho_t(X_t)} \partial_{x_i}h(X_t)\Big].
  \end{align*}
  Inserting this into the left-hand side of \eqref{d-9}, the arbitrariness of $\beta(t) h(x)\in\! C_c^\infty((0,T)\!\times \!\oo)$ can yield  that $v_t(x)$ can be represented as   \eqref{d-7}.

  (ii) Since $u$ is Lipschitz continuous in $(\pb(\bar\oo),\W_2)$, there exists $C>0$ such that
  \[|u(\mu_{t+\veps})-u(\mu_t\circ (\mathrm{Id}+\veps v_t)^{-1})|\leq C\W_2(\mu_{t+\veps}, \mu_t\circ (\mathrm{Id}+\veps v_t)^{-1})=o(\veps).\]
  According to Proposition \ref{prop-1.5},
  \[\W_2(\mu_{t+\veps},\mu_t\circ (\mathrm{Id}+\veps v_t)^{-1})=o(\veps),\]
  where $v_t$ is given by \eqref{d-7}. Thus,
  \begin{align*}
    \lim_{\veps\downarrow 0} \frac{u(\mu_{t+\veps})-u(\mu_t)}{\veps} &=\lim_{\veps\downarrow 0} \frac{ u(\mu_t\circ (\mathrm{Id}+\veps v_t)^{-1})-u(\mu_t)}{\veps}=\la D^L u(\mu_t),v_t\raa_{\mathscr{T}_{\mu_t}}\\
    &=\int_{\bar\oo}\la b(t,x,\mu_t,\alpha_t), D^L u(\mu_t)(x)\raa \d\mu_t(x)\!-\!\frac 12\int_{\bar\oo}\! \la D^L u(\mu_t)(x), A\nabla_x\rho_t(x)\raa \d x.
  \end{align*}
  Since $\la A D^L u(\mu), \mathbf{n}\raa =0$ on $\partial \oo$, we derive from Green's formula that
  \[\la D^L u(\mu_t),v_t\raa_{\mathscr{T}_{\mu_t}} =\int_{\bar\oo}\la b(t,x,\mu_t,\alpha_t), D^L u(\mu_t)(x)\raa \d\mu_t(x)\!+\!\frac 12\int_{\bar\oo}\! \mathrm{tr}\big( A\nabla_x D^L u(\mu_t)(x)\big) \d\mu_t(x).\]
  We complete the proof.
\end{proof}

\begin{mythm}[Characterization of tangent vector fields: general case] \label{thm-3.7} Assume $\mathrm{(H_1)}$ and $\mathrm{(H_2)}$ hold.
Let $(X_t, k_t)_{t\in [0,T]}$ be a solution to \eqref{a-1} associated with a feedback control $\balpha \in \Pi_{0,\mu_0}$ in the form $\alpha_t=F(t,X_t,\law_{X_t})$ and $\law_{X_0}=\mu_0\!\in\! \pb(\bar\oo)$.  Then, for  every $u\! \in \! C^1_{L,b}(\pb(\bar\oo))$  and $0\leq t_1<t_2\leq T$, it holds
\begin{equation}\label{d-6.5}
u(\mu_{t_2})-\mu(\mu_{t_1})=\int_{t_1}^{t_2}\!\!\!\int_{\bar\oo}\! \Big( \la b(t,x,\mu_t,\alpha_t), D^L u(\mu_t)(x)\raa\! +\frac 12\mathrm{tr}\big(A \nabla_xD^L u(\mu_t)(x)\big)\Big)\d \mu_t(x) \d t.
\end{equation}
\end{mythm}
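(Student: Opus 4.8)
The plan is to deduce \eqref{d-6.5} from the regular case of Theorem \ref{thm-3.6} by exploiting the smoothing of the reflected diffusion together with a cut-off of the time interval near $0$. The key observation is that for every $t>0$ the representation \eqref{rep} of $\rho_t$ via the transition density $p$ and the Gaussian bounds \eqref{heat} force $\rho_t\in C^1(\bar\oo)$ and $\rho_t>0$, so $\mu_t\in\pb^r(\bar\oo)$ even when $\mu_0$ is not. Consequently, for every $\veps\in(0,T)$ the curve $(\mu_t)_{t\in[\veps,T]}$ is the distribution flow of the controlled process restarted at time $\veps$ from $\law_{X_\veps}=\mu_\veps\in\pb^r(\bar\oo)$ under the same feedback function $F\in\wt\Pi$, so Theorem \ref{thm-3.6} applies on $[\veps,T]$.

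First I would prove \eqref{d-6.5} with $t_1$ replaced by an arbitrary $\veps>0$. On $[\veps,T]$ the parabolic regularity above yields $\rho_t(x)\ge c_\veps>0$ and, by the classical gradient estimate for $p$, $|\nabla_x\rho_t(x)|\le C_\veps$, uniformly over $[\veps,T]\times\bar\oo$; hence the velocity field \eqref{d-7}, $v_t=b(t,\cdot,\mu_t,F(t,\cdot,\mu_t))-\tfrac12 A\nabla_x\rho_t/\rho_t$, is bounded on $[\veps,T]\times\bar\oo$. Combining this with Theorem \ref{Amb}, $t\mapsto\mu_t$ is Lipschitz in $\W_2$ on $[\veps,T]$, and therefore so is $t\mapsto u(\mu_t)$ because $u$ is $\W_2$-Lipschitz; in particular $t\mapsto u(\mu_t)$ is absolutely continuous on $[\veps,T]$. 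By Theorem \ref{thm-3.6}(ii), for a.e.\ $t\in(\veps,T)$ the derivative $\tfrac{\d}{\d t}u(\mu_t)$ equals the integrand on the right-hand side of \eqref{d-6.5}, so the fundamental theorem of calculus gives
\[
u(\mu_{t_2})-u(\mu_\veps)=\int_{\veps}^{t_2}\!\!\int_{\bar\oo}\!\Big(\la b(t,x,\mu_t,\alpha_t),D^Lu(\mu_t)(x)\raa+\tfrac12\mathrm{tr}\big(A\nabla_xD^Lu(\mu_t)(x)\big)\Big)\d\mu_t(x)\,\d t,\qquad 0<\veps<t_2\le T.
\]
For $t_1>0$ this is \eqref{d-6.5}, upon taking $\veps=t_1$.

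It remains to let $\veps\downarrow0$ to reach the endpoint $t_1=0$. On the left-hand side, $t\mapsto\mu_t$ is continuous on the \emph{closed} interval $[0,T]$ — from $\W_2(\mu_t,\mu_s)^2\le C(|t-s|^2+|t-s|)$, obtained exactly as in the proof of Theorem \ref{thm-3.6}(i) — so $u(\mu_\veps)\to u(\mu_0)$ by continuity of $u$. On the right-hand side, although the velocity field \eqref{d-7} blows up like $t^{-1/2}$ as $t\downarrow0$, the \emph{integrand} above stays bounded uniformly in $t\in(0,T]$: by $\mathrm{(H_1)}$ and the boundedness of $\oo$ and $U$ one has $|b|\le M$, by $\mathrm{(H_2)}$ the matrix $A$ is bounded, and since $u\in C^1_{L,b}(\pb(\bar\oo))$, Definition \ref{defn-3}(ii) gives $\sup_{\mu,x}\big(|D^Lu(\mu)(x)|+|\nabla_xD^Lu(\mu)(x)|\big)<\infty$, so the inner integral is bounded by a constant $L$ for all $t$. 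Dominated convergence then yields $\int_\veps^{t_2}(\cdots)\,\d t\to\int_0^{t_2}(\cdots)\,\d t$, which proves \eqref{d-6.5} for $t_1=0$; the general case $0\le t_1<t_2\le T$ then follows by subtracting the identities for the two endpoints.

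The main obstacle is precisely this behaviour at $t=0$: for $\mu_0\notin\pb^r(\bar\oo)$ one cannot differentiate $u(\mu_t)$ at $t=0$ and the tangent field \eqref{d-7} degenerates there, so Theorem \ref{thm-3.6} is available only on $(0,T]$. The passage to the limit works only because the functional in \eqref{d-6.5}, in contrast with $v_t$ itself, is controlled by the uniform bounds of Definition \ref{defn-3}(ii) and of $\mathrm{(H_1)}$ and $\mathrm{(H_2)}$ — the potentially singular term $\int_{\bar\oo}\la D^Lu(\mu_t),A\nabla_x\rho_t\raa\d x$ having already been absorbed, via Green's formula in the proof of Theorem \ref{thm-3.6}(ii), into the bounded expression $\tfrac12\int_{\bar\oo}\mathrm{tr}(A\nabla_xD^Lu(\mu_t))\d\mu_t$. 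The one point to be handled with care is the uniform-in-$t$ bound on $\nabla_x\rho_t$ on intervals $[\veps,T]$, for which I would invoke the classical gradient estimates for fundamental solutions of parabolic equations with Neumann boundary condition cited before Theorem \ref{thm-3.6}.
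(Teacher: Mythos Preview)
Your argument is correct and follows a genuinely different route from the paper. Instead of reducing to Theorem \ref{thm-3.6}, the paper works directly with the general initial condition: it shows $(\mu_t)$ is absolutely continuous on $[0,T]$, invokes Theorem \ref{Amb} to obtain an abstract velocity field $v_t$, and derives from It\^o's formula the weak identity \eqref{d-10} valid for test functions $\psi\in C^{0,2}([0,T]\times\bar\oo)$. Proposition \ref{prop-1.5} then yields $u(\mu_{t_2})-u(\mu_{t_1})=\int_{t_1}^{t_2}\la D^Lu(\mu_t),v_t\raa_{\mathscr T_{\mu_t}}\d t$, and the paper closes by choosing a specific $\psi$ with $\nabla_x\psi(t,\cdot)=D^Lu(\mu_t)$, so that \eqref{d-10} converts the $v_t$-pairing into the right-hand side of \eqref{d-6.5}. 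No smoothing and no limit $\veps\downarrow0$ are used.

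Your reduction-by-smoothing is arguably more transparent: it replaces the paper's construction of a primitive of $D^Lu(\mu_t)$ (which, as written via $\psi(t,x)=\int_{x_0}^x D^Lu(\mu_t)\,\d x$, tacitly assumes $D^Lu(\mu_t)$ is a gradient field in $\R^d$) by a limit argument that only needs the uniform bounds from Definition \ref{defn-3}(ii). The price you pay is an extra parabolic ingredient not recorded in \eqref{heat}, namely $C^1$-regularity in $x$ and gradient bounds for the Neumann heat kernel so that $\mu_\veps\in\pb^r(\bar\oo)$ and $v_t$ is bounded on $[\veps,T]$; you correctly flag this as the point to be handled with care. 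The paper's route, by contrast, never touches the density $\rho_t$ beyond what the abstract continuity equation provides, and is therefore insensitive to such regularity questions.
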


\begin{proof}
Similar to proof of Theorem \ref{thm-3.6}(i), for $\mu_{0}\in \pb(\bar\oo)$ instead of in $\pb^r(\bar\oo)$, the curve $(\mu_t)$ is still an absolutely continuous curve in $\pb(\bar\oo)$. The existence of the vector field $v_t $ satisfying \eqref{d-3} in the sense of \eqref{d-4} still exists according to Theorem \ref{Amb}. Now, we cannot have the explicit expression \eqref{d-7} for $v_t$. Nevertheless, by \eqref{d-4}, similar to the deduction in \eqref{d-9}, using It\^o's formula and smooth approximation, it  holds that for any $\psi(t,x)\in C^{0,2}([0,T]\times\bar\oo)$ (i.e. $\psi(t,x)$ is continuous in $t$ and   second order continuously differentiable in $x$), for $0\leq t_1<t_2\leq T$,
\begin{equation}\label{d-10}
\begin{split}
&\int_{t_1}^{t_2}\int_{\bar\oo}\la v_t(x), \nabla \psi(t,x)\raa \d\mu_t(x)\d t=\int_{\bar\oo}\psi(t_1,x)\d\mu_{t_1}(x)-\int_{\bar\oo}\psi(t_2,x) \d\mu_{t_2}(x) \\
&\qquad \qquad + \int_{t_1}^{t_2}\int_{\bar\oo}\Big( \la b(t,x,\mu_t,\alpha_t),\nabla_x \psi(t,x)\raa+\frac12 \mathrm{tr}\big(A\nabla^2_x\psi(t,x)\big) \Big)\d \mu_t(x)\d t.
\end{split}
\end{equation}
The relation
 \[\W_2(\mu_{t+\veps},\mu_t\circ (\mathrm{Id}+\veps v_t)^{-1})=o(\veps)\] still holds due to Proposition \ref{prop-1.5}, and hence for $u\in C^1_{L,b}(\pb(\bar\oo))$,
 \begin{equation*}
   \begin{split}
    u(\mu_{t_2})-u(\mu_{t_1})= \int_{t_1}^{t_2}  \frac{\d u(\mu_t)}{\d t} \d t=\int_{t_1}^{t_2} \!\! \int_{\bar\oo}\! \la D^L u(\mu_t)(x), v_t(x) \raa\d \mu_t(x) \d t.
   \end{split}
 \end{equation*} Define
 \[\psi(t,x)=\int_{x_0}^x D^Lu (\mu_t)(x)\d x+M,\quad \text{for some $x_0\in \oo$},\]
 where $M$ is a constant such that $\int_{\bar\oo}\psi(0,x)\d \mu_0(x)=\int_{\bar\oo}\psi(T,x)\d \mu_T(x)$. Since $u\in C^1_{L,b}(\pb(\bar\oo))$,
 we have $\psi\in C^{0,2}([0,T]\times\bar\oo)$
 and
 \begin{equation}\label{d-11}
 \nabla_x \psi(t,x)=D^Lu(\mu_t)(x),\quad x\in \bar\oo, t\in (0,T),
 \end{equation}
 then we derive from \eqref{d-10} that
 \begin{equation*}
 u(\mu_{t_2})-\mu(\mu_{t_1})=\int_{t_1}^{t_2}\!\!\!\int_{\bar\oo}\! \Big( \la b(t,x,\mu_t,\alpha_t), D^L u(\mu_t)(x)\raa\! +\frac 12\mathrm{tr}\big(A \nabla_xD^L u(\mu_t)(x)\big)\Big)\d \mu_t(x) \d t,
 \end{equation*} which is the desired conclusion.
 \end{proof}

\subsection{Viscosity solutions to HJB equations}

Based on Theorem \ref{thm-3.6}, applying the dynamic programming principle, Proposition \ref{prop-2}, we shall characterize the value function as a unique viscosity solution to the following HJB equation:
\begin{equation}\label{e-1}
\begin{cases}
- \partial_t u(t,\mu)\!-\!\inf_{\alpha\in U} \mathcal{H}(t,\mu,u,D^L u,\alpha )\!=0,   &t\in [0,T), \mu\in \pb (\bar\oo),\\
u(T,\mu)=\int_{\oo}g(x,\mu)\d \mu(x), &\quad\mu\in \pb (\bar\oo),
\end{cases}
\end{equation} where the Hamiltonian
\begin{equation}\label{e-v}
\begin{split}
 \mathcal{H}(t,\mu,u,D^L u,\alpha )
 & = \int_{\oo} \la b(t,x,\mu,\alpha), D^Lu(t,\mu)\raa \d \mu(x)\\
 &\quad+\!\frac 12\!\int_{\oo}\!\mathrm{tr} \big( A \nabla_xD^L u(t,\mu)(x)\big) \d \mu(x) \!+\!\int_{\oo}\! \vartheta (t,x,\mu,\alpha)\d \mu(x).
 \end{split}
\end{equation}

Let us first introduce the notation of viscosity solution for the equation \eqref{e-1}.

\begin{mydefn}\label{def-5}
Let $u:[0,T]\times \pb (\bar\oo)\to \R$ be a continuous function.
\begin{itemize}
  \item[$(i)$] $u$ is called a viscosity subsolution to \eqref{e-1} if $u(T,\mu)=\int_{\oo}g(x,\mu)\d\mu(x)$, and
      \begin{equation}\label{e-2}
      -\partial_t\psi(t_0,\mu_0)-\inf_{\alpha\in U}\mathcal{H}\big(t_0,\mu_0, \psi ,  D^L \psi,\alpha \big)\leq 0
      \end{equation} for all $\psi\in C_{L,b}^{1,1}\big([0,T)\!\times\!\pb (\bar\oo)\big)$ and all $(t_0,\mu_0)\in [0,T)\!\times\! \pb (\bar\oo)$ being a maximum point of $u-\psi$.

  \item[$(ii)$] $u$ is called a viscosity supersolution to \eqref{e-1} if $u(T,\mu)=\int_{\oo}g(x,\mu)\d\mu(x)$, and
      \begin{equation}\label{e-3}
      -\partial_t\psi(t_0,\mu_0)-\inf_{\alpha\in U}\mathcal{H}\big(t_0,\mu_0,\psi ,D^L \psi,\alpha \big)\geq 0
      \end{equation} for all $\psi\in C_{L,b}^{1,1}\big([0,T)\!\times\!\pb (\bar\oo)\big)$ and all $(t_0,\mu_0)\in [0,T)\!\times \!\pb (\bar\oo)$ being a minimum point of $u-\psi$.
  \item[(iii)] If $u$ is both a viscosity subsolution and a viscosity supersolution to equation \eqref{e-1}, then $u$ is called a viscosity solution to \eqref{e-1}.
\end{itemize}
\end{mydefn}

\begin{mylem}\label{lem-2}
Assume $\mathrm{(H_1)}$-$\mathrm{(H_3)}$ hold. Let  $\psi\in C_{L,b}^{1,1}\big([0,T)\!\times\!\pb (\bar\oo)\big)$. If  $\mu$,  $\mu_n\in \pb (\bar\oo)$, $n\geq 1$, satisfy $\lim_{n\to \infty} \W_1(\mu_n,\mu)=0$, then
\begin{equation}\label{e-3.1}
\lim_{n\to\infty} \mathcal{H}(t,\mu_n,\psi,D^L\psi,\alpha)= \mathcal{H}(t,\mu,\psi,D^L\psi,\alpha)
\end{equation}
uniformly w.r.t.\,$\alpha\in\! U$.
\end{mylem}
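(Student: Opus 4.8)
The plan is to exploit the boundedness of $\oo$, which makes $\W_1$- and $\W_2$-convergence coincide on $\pb(\bar\oo)$: choosing a $\W_1$-optimal coupling of $\mu,\nu$ and using $|x-y|^2\le\mathrm{diam}(\oo)|x-y|$ on $\bar\oo$ gives $\W_2(\mu,\nu)^2\le\mathrm{diam}(\oo)\,\W_1(\mu,\nu)$, so $\W_1(\mu_n,\mu)\to0$ forces $\W_2(\mu_n,\mu)\to0$ and, $\bar\oo$ being compact, the narrow convergence $\mu_n\to\mu$. I may therefore freely invoke $\mathrm{(H_1)}$, $\mathrm{(H_3)}$ and conditions (i)--(iii) of Definition \ref{defn-3} along $(\mu_n)$. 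I will also use that under $\mathrm{(H_1)}$ the drift is bounded, $|b(t,x,\mu,\alpha)|\le M$ with $M$ depending only on $K_1$, $T$ and the diameters of $\oo$ and $U$, and that Definition \ref{defn-3}(ii) supplies finite constants $C_1=\sup|D^L\psi(t,\mu)(x)|$ and $C_2=\sup|\nabla_xD^L\psi(t,\mu)(x)|$. The point that secures \emph{uniformity} is that $M,C_1,C_2,K_2,K_3$ and the Lipschitz constants of $b$ and $\vartheta$ are all independent of $\alpha$, so any bound on $|\mathcal H(t,\mu_n,\psi,D^L\psi,\alpha)-\mathcal H(t,\mu,\psi,D^L\psi,\alpha)|$ assembled from these constants together with $\W_1(\mu_n,\mu)$, $\W_2(\mu_n,\mu)$ and $\mu_n$-measures of level sets is automatically uniform in $\alpha\in U$. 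Writing $\mathcal H$ via \eqref{e-v} as the sum of a drift term, a second-order term and a running-cost term, I would estimate the difference of the $\mu_n$- and $\mu$-versions of each.

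For the running-cost term write $\int_\oo\vartheta(t,x,\mu_n,\alpha)\,\d\mu_n-\int_\oo\vartheta(t,x,\mu,\alpha)\,\d\mu=\int_\oo(\vartheta(t,x,\mu_n,\alpha)-\vartheta(t,x,\mu,\alpha))\,\d\mu_n+(\int_\oo\vartheta(t,x,\mu,\alpha)\,\d\mu_n-\int_\oo\vartheta(t,x,\mu,\alpha)\,\d\mu)$; by $\mathrm{(H_3)}$ the first summand is at most $K_3\W_2(\mu_n,\mu)$, and since $x\mapsto\vartheta(t,x,\mu,\alpha)$ is $K_3$-Lipschitz, the Kantorovich--Rubinstein duality bounds the second by $K_3\W_1(\mu_n,\mu)$; both vanish with $\alpha$-free constants. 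For the drift term note that the integrand $\la b(t,x,\mu_n,\alpha),D^L\psi(t,\mu_n)(x)\raa$ is bounded by $MC_1$, and decompose $\la b(t,x,\mu_n,\alpha),D^L\psi(t,\mu_n)(x)\raa-\la b(t,x,\mu,\alpha),D^L\psi(t,\mu)(x)\raa$ as $\la b(t,x,\mu_n,\alpha)-b(t,x,\mu,\alpha),D^L\psi(t,\mu_n)(x)\raa+\la b(t,x,\mu,\alpha),D^L\psi(t,\mu_n)(x)-D^L\psi(t,\mu)(x)\raa$; integrating against $\mu_n$ and then replacing $\mu_n$ by $\mu$ in $\int_\oo\la b(t,x,\mu,\alpha),D^L\psi(t,\mu)(x)\raa\,\d\mu_n$ yields three pieces, the first $\le C_1K_1\W_2(\mu_n,\mu)$ by $\mathrm{(H_1)}$, the third $\le(MC_2+C_1K_1)\W_1(\mu_n,\mu)$ since $x\mapsto\la b(t,x,\mu,\alpha),D^L\psi(t,\mu)(x)\raa$ is Lipschitz with that constant (product of a bounded Lipschitz field and a bounded Lipschitz function, by $\mathrm{(H_1)}$ and Definition \ref{defn-3}(ii)), again by Kantorovich--Rubinstein, and the second $\le M\int_\oo|D^L\psi(t,\mu_n)(x)-D^L\psi(t,\mu)(x)|\,\d\mu_n(x)$.

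For the second-order term, which carries no $\alpha$ at all, split it into $\frac12\int_\oo\mathrm{tr}(A(\nabla_xD^L\psi(t,\mu_n)(x)-\nabla_xD^L\psi(t,\mu)(x)))\,\d\mu_n(x)$ and $\frac12(\int_\oo\mathrm{tr}(A\nabla_xD^L\psi(t,\mu)(x))\,\d\mu_n-\int_\oo\mathrm{tr}(A\nabla_xD^L\psi(t,\mu)(x))\,\d\mu)$: the latter vanishes by narrow convergence because $x\mapsto\mathrm{tr}(A\nabla_xD^L\psi(t,\mu)(x))$ is bounded and continuous by Definition \ref{defn-3}(i)--(ii), while the former is $\le c_A\int_\oo|\nabla_xD^L\psi(t,\mu_n)(x)-\nabla_xD^L\psi(t,\mu)(x)|\,\d\mu_n(x)$ with $c_A$ depending only on $A$ (bounded by $\mathrm{(H_2)}$). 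Hence everything reduces to the two $L^1(\mu_n)$-type convergences $\int_\oo|D^L\psi(t,\mu_n)-D^L\psi(t,\mu)|\,\d\mu_n\to0$ and $\int_\oo|\nabla_xD^L\psi(t,\mu_n)-\nabla_xD^L\psi(t,\mu)|\,\d\mu_n\to0$.

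I expect these last two convergences to be the only genuine difficulty, and they are precisely the reason Definition \ref{defn-3}(ii)--(iii) is imposed: they do \emph{not} follow from $\W_2(\mu_n,\mu)\to0$ alone. Rather, Definition \ref{defn-3}(iii) provides, for each $\veps>0$, $\mu_n(\{|D^L\psi(t,\mu_n)-D^L\psi(t,\mu)|\ge\veps\})\to0$ and $\mu_n(\{|\nabla_xD^L\psi(t,\mu_n)-\nabla_xD^L\psi(t,\mu)|\ge\veps\})\to0$, i.e. convergence in $\mu_n$-measure, while Definition \ref{defn-3}(ii) bounds the two integrands uniformly by $2C_1$ and $2C_2$; the elementary estimate $\int_\oo|f_n|\,\d\mu_n\le\veps+2C\,\mu_n(\{|f_n|\ge\veps\})$ then gives $\limsup_n\int_\oo|f_n|\,\d\mu_n\le\veps$, and letting $\veps\downarrow0$ closes each claim. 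Collecting the bounds, $\sup_{\alpha\in U}|\mathcal H(t,\mu_n,\psi,D^L\psi,\alpha)-\mathcal H(t,\mu,\psi,D^L\psi,\alpha)|$ is dominated by a finite sum of terms each tending to $0$ and each free of $\alpha$, which is exactly \eqref{e-3.1}.
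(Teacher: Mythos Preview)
Your proof is correct and follows essentially the same approach as the paper's: both split $\mathcal H$ into the drift, second-order, and running-cost terms; handle the ``fixed integrand, varying measure'' pieces by Kantorovich--Rubinstein or weak convergence; control the ``varying $\mu$ in $b$ or $\vartheta$'' pieces by the Lipschitz bounds in $\mathrm{(H_1)}$ and $\mathrm{(H_3)}$; and reduce the remaining ``varying $\mu$ in $D^L\psi$ and $\nabla_xD^L\psi$'' pieces to $L^1(\mu_n)$-convergence via Definition~\ref{defn-3}(ii)--(iii). Your drift decomposition pairs the $b$-increment with $D^L\psi(t,\mu_n)$ while the paper pairs the $D^L\psi$-increment with $b(\cdot,\mu,\alpha)$, but this is a cosmetic reordering; and your explicit level-set estimate $\int|f_n|\,\d\mu_n\le\veps+2C\,\mu_n(\{|f_n|\ge\veps\})$ makes precise what the paper phrases as ``dominated convergence''.
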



\begin{proof}
  We shall estimate the convergence of three terms in $\mathcal{H}(t,\mu_n,\psi,D^L\psi,\alpha)$ separately.
  We shall note that as $\bar\oo$ is compact,  for every $p\geq 1$, $ \lim_{n\to \infty} \W_p(\mu_n,\mu)=0$ is  equivalent to the weak convergence of $\mu_n$ to $\mu$ (cf. \cite[Chapter 6]{Vil}).

  Firstly, consider the convergence of the term
  \begin{equation}\label{e-3.5}
  \begin{aligned}
    &\int_{\oo}\!\la D^{L}\psi(t,\mu)(x),b(t,x,\mu,\alpha)\raa \d \mu(x)\!-\!\int_{\oo}\!\la D^L\psi(t,\mu_n)(x),b(t,x,\mu_n, \alpha)\raa\d\mu_n(x)\\
    &=\int_{\oo}\la D^L\psi(t,\mu)(x)-D^{L}\psi(t,\mu_n)(x),b(t,x,\mu,\alpha)\raa \d \mu_n(x)\\
    &\quad+\int_{\oo} \la D^L\psi(t,\mu_n)(x), b(t,x,\mu_n,\alpha)- b(t,x,\mu,\alpha)\raa \d\mu_n(x)\\
    &\quad +\int_{\oo} \la D^L\psi(t,\mu)(x),b(t,x,\mu,\alpha) \raa \big(\d \mu(x)-\d \mu_n(x)\big)\\
    &=:\mathrm{(I_1)}+\mathrm{(I_2)}+\mathrm{(I_3)}.
  \end{aligned}
  \end{equation}
  Put
  \begin{gather*}
  M_b=\sup\big\{ |b(t,x,\nu,\alpha)|;(t,x,\nu,\alpha)\in [0,T]\times \bar\oo\times\pb(\bar\oo)\times U\big\},\\
  M_{\psi}=\sup\big\{ |D^L\psi(t,\nu)(x)|+|\nabla_x D^L\psi(t,\nu)(x)|; (t,x,\nu)\in [0,T]\times \bar\oo\times\pb (\bar\oo)\big\},
   \end{gather*}
   which are all finite due to $\mathrm{(H_1)}$,  $\psi\in C_{L,b}^{1,1}\big([0,T)\!\times\!\pb (\bar\oo)\big)$, and the compactness of $[0,T],\,\bar\oo,\pb(\bar\oo)$ and $U$.
  Then,
  \[\big|\mathrm{(I_1)}\big|\leq \int_{\oo} M_b|D^L\psi(t,\mu)(x)-D^L\psi(t,\mu_n)(x)|\d\mu_n(x).\]
  By  Definition \ref{defn-3}(iii),
   \begin{equation}\label{e-3.6}
   \mu_n\big(\{x\in \oo; |D^L\psi(t,\mu_n)(x)-D^L\psi(t,\mu)(x)|\geq \veps\}\big)\longrightarrow 0,\quad \text{as}\ n\to\infty,
   \end{equation}
   and hence, the dominated convergence theorem yields that
  \begin{equation}\label{e-4}
  \begin{split}
  \lim_{n\to\infty} \big|\mathrm{(I_1)}\big|&\leq \lim_{n\to\infty}\int_{\oo}M_b   |D^L\psi(t,\mu)(x)-D^L\psi(t,\mu_n)(x)|\d \mu_n(x)
   =0,
  \end{split}
  \end{equation} uniformly w.r.t. $\alpha$. Next, for term $\mathrm{(I_2)}$,
  it follows from  $\mathrm{(H_1)}$ that
  \begin{equation}\label{e-5}
  \lim_{n\to\infty}\big|\mathrm{(I_2)}\big|\leq \!\lim_{n\to \infty} K_1 M_\psi  \W_2(\mu_n,\mu) =0,\quad
  \text{uniformly w.r.t.\,$\alpha\in U$}.
  \end{equation}

  Now we proceed to estimate the term $\mathrm{(I_3)}$.
  Under the condition $\mathrm{(H_1)}$, one can check directly that
  $x\mapsto\la D^L\psi(t,\mu)(x), b(t,x,\mu,\alpha)\raa$ is a bounded, Lipschitz continuous function with
  \[ \sup_{\alpha\in U}\sup_{x\neq y}\frac{ \la D^L\psi(t,\mu)(x), b(t,x,\mu,\alpha)\raa- \la D^L\psi(t,\mu)(y), b(t,y,\mu,\alpha)\raa}{|x-y|}<\infty.\] According to the dual representation of Wasserstein distance $\W_1$, i.e.
  \begin{equation}\label{dual}
  \W_1(\mu,\nu)=\sup\Big\{ \int_{ \oo} h(x)\d \mu(x)-\int_{ \oo} h(x)\d \nu(x);\ \sup_{x\neq y} \frac{|h(x)-h(y)|}{|x-y|}\leq 1\Big\},
  \end{equation}
  there is some constant $C>0$ such that
  \begin{equation}\label{e-6}
  |\mathrm{(I_3)}|\leq C \W_1(\mu_n,\mu)\longrightarrow 0,\quad \text{as}\ n\to \infty,\quad \text{uniformly w.r.t.\,$\alpha\in U$}.
  \end{equation}
  Inserting the estimates \eqref{e-4}, \eqref{e-5}, \eqref{e-6} into \eqref{e-3.5}, we get
  \begin{equation}\label{e-7}
  \lim_{n\to \infty}
  \int_{\oo}\!\la D^{L}\psi(t,\mu)(x),b(t,x,\mu,\alpha)\raa \d \mu(x)\!-\!\int_{\oo}\!\!\la D^L\psi(t,\mu_n)(x),b(t,x,\mu_n, \alpha)\raa\d\mu_n(x)=0
  \end{equation} uniformly w.r.t.\,$\alpha\in U$.

  Secondly,
  \begin{equation*}
  \begin{aligned}
    &\Big|\int_{\oo}\!\mathrm{tr}\big( A\nabla_x D^L\psi(t,\mu_n)(x)\big) \d\mu_n(x) -\int_{\oo}\mathrm{tr}\big( A\nabla_x D^L\psi(t,\mu)(x)\big)\d \mu(x)\Big|\\
    &\leq \Big|\int_{\oo}\big[\mathrm{tr}\big( A\nabla_x  D^L\psi(t,\mu_n)(x)\big)-\mathrm{tr}\big( A\nabla_x D^L \psi(t,\mu)(x)\big)\big] \d\mu_n(x)\Big|\\
    &\quad +\Big|\int_{\oo} \mathrm{tr}\big(A \nabla_x D^L\psi(t,\mu)(x)\big) \d(\mu_n-\mu)(x)\Big|.
  \end{aligned}
  \end{equation*}
   Then, by  Definition \ref{defn-3}(iii) and the weak convergence of $\mu_n$ to $\mu$, we get that
   \begin{equation}\label{e-8}
   \lim_{n\to \infty} \Big|\int_{\oo}\!\mathrm{tr}\big( A\nabla_x D^L\psi(t,\mu_n)(x)\big) \d\mu_n(x) -\int_{\oo}\mathrm{tr}\big( A\nabla_x D^L\psi(t,\mu)(x)\big)\d \mu(x)\Big|=0.
   \end{equation}


  At last, due to $\mathrm{(H_3)}$ and the dual representation \eqref{dual} of $\W_1$,
  \begin{equation}\label{e-9}
  \begin{aligned}
    &\Big|\int_{\oo} \vartheta(t,x,\mu_n,\alpha)\d \mu_n(x) \!- \!\int_{\oo} \vartheta(t,x,\mu,\alpha)\d \mu(x) \Big|\\
    & \leq \int_{\oo} |\vartheta(t,x,\mu_n,\alpha)-\vartheta(t,x,\mu,\alpha)|\d \mu_n(x)+\Big|\int_{\oo} \vartheta(t,x,\mu,\alpha) \d (\mu_n -  \mu)(x)\Big|\\
    &\leq K_3\W_2(\mu_n,\mu)+K_3\W_1(\mu_n,\mu)\longrightarrow 0,\quad \text{as $n\to \infty$, uniformly w.r.t.\,$\alpha\in U$.}
  \end{aligned}
  \end{equation}
  Consequently, the desired conclusion \eqref{e-3.1} follows immediately from \eqref{e-9}, \eqref{e-8} and \eqref{e-7}. The proof is complete.
\end{proof}

%
%


\begin{mylem}\label{lem-5} Assume $\mathrm{(H_1)}$ and $\mathrm{(H_2)}$ hold. Then for any $\mu_{t_0}\in \pb(\bar\oo)$ and $\Theta\in \Pi_{t_0,\mu_{t_0}}$, the law $\law_{X_t}$ of the controlled process $X_t$ satisfies that for any $t_0\leq s<t\leq T$,
\[\lim_{t\to s}\W_1(\law_{X_s},\law_{X_t})=0.
\]
\end{mylem}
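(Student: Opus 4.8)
The statement says that the marginal–law flow $t\mapsto\law_{X_t}$ is continuous for $\W_1$, and the natural route is to pass through the $L^2$ distance of the processes and then use dominated convergence. First I would observe that $(X_s,X_t)$ is itself a coupling of $\law_{X_s}$ and $\law_{X_t}$, so that, using $\W_1\le\W_2$,
\[\W_1(\law_{X_s},\law_{X_t})\le\W_2(\law_{X_s},\law_{X_t})\le\big(\E|X_t-X_s|^2\big)^{1/2},\qquad t_0\le s<t\le T.\]
Hence it suffices to show $\E|X_t-X_s|^2\to0$ as $t\downarrow s$.

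For this, dominated convergence does the job with essentially no computation: by the definition of a solution to \eqref{b-1}, the process $(X_r)$ has $\p$-a.s. continuous sample paths, so $X_t\to X_s$ a.s. as $t\downarrow s$, while the constraint $X_r\in\bar\oo$ and the boundedness of $\oo$ give the uniform dominating bound $|X_t-X_s|^2\le\mathrm{diam}(\oo)^2$. Therefore $\E|X_t-X_s|^2\to0$, and the previous display yields $\lim_{t\to s}\W_1(\law_{X_s},\law_{X_t})=0$. In this qualitative form the lemma requires nothing beyond the (already known) continuity of the reflected trajectories, so there is no genuine obstacle; the only delicate point, should one wish to avoid invoking dominated convergence abstractly, is an a priori control of the reflecting local time, which I sketch next.

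Writing the increment of \eqref{b-1} as
\[X_t-X_s=\int_s^tb(r,X_r,\law_{X_r},\alpha_r)\d r+\sigma(B_t-B_s)-\int_s^t\vec{\mathbf{n}}(X_r)\d k_r,\]
and using that $b$ is bounded under $\mathrm{(H_1)}$ and $|\vec{\mathbf{n}}|\equiv1$, one gets $\E|X_t-X_s|^2\le C\big((t-s)^2+\|\sigma\|^2(t-s)+\E[(k_t-k_s)^2]\big)$, so everything reduces to an estimate on $k$. Here I would fix an interior point $x_\ast\in\oo$ and set $\phi(x)=|x-x_\ast|^2\in C^2(\bar\oo)$: by convexity of $\oo$ and the supporting–hyperplane property, $\la\nabla\phi(x),\vec{\mathbf{n}}(x)\raa=2\la x-x_\ast,\vec{\mathbf{n}}(x)\raa>0$ on $\partial\oo$, hence $\ge c_0$ for some $c_0>0$ by compactness. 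Applying It\^o's formula to $\phi(X_r)$ on $[t_0,T]$, using that $\d k_r$ is carried by $\{X_r\in\partial\oo\}$ and that $\phi,\nabla\phi,\nabla^2\phi$ are bounded, and taking second moments (with It\^o's isometry for the stochastic integral) yields the a priori bound $\E[k_T^2]<\infty$; then $0\le k_t-k_s\le k_T$ together with the a.s. continuity of $k$ gives $\E[(k_t-k_s)^2]\to0$ by dominated convergence once more (and, with a little more care, even a H\"older-$\tfrac12$ modulus $\W_1(\law_{X_s},\law_{X_t})\le C\sqrt{t-s}$). On this route the single technical step — and the one I would expect to be the main obstacle — is the a priori second–moment estimate on $k_T$, for which the convexity of $\oo$ is exactly what makes the elementary choice of $\phi$ work and spares one the general $C^{1,1}$ barrier construction.
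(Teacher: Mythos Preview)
Your argument is correct and considerably more elementary than the paper's. The paper proceeds through the heat--kernel representation \eqref{rep} and the Gaussian bounds \eqref{heat}: for $s=t_0$ it tests against $1$--Lipschitz functions and uses the upper Gaussian bound to obtain $\W_1(\mu_t,\mu_{t_0})\leq C\sqrt{t-t_0}$, while for $s>t_0$ it uses the time--derivative estimate on $p$ to show $\|\mu_t-\mu_s\|_{\mathrm{var}}\to 0$ and hence $\W_1$--continuity. Your route bypasses all of this by noting that $(X_s,X_t)$ is already a coupling, that paths are a.s.\ continuous, and that $|X_t-X_s|^2\leq \mathrm{diam}(\oo)^2$ is a uniform dominator; dominated convergence then gives the conclusion directly without using $\mathrm{(H_2)}$ at all.

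One point worth flagging: in the application to Theorem~\ref{thm-3.11} (the supersolution half, around \eqref{g-4}) the paper needs a $\zeta_1>0$ that works \emph{uniformly over all admissible controls} $\bm\alpha\in\Pi_{t_0,\mu_{t_0}}$. The paper's heat--kernel proof delivers this because the constants $\kappa_1,\kappa_2$ depend only on the uniform bound $M$ on $b$ and on $\lambda_0$. Your first, purely qualitative argument does not give this uniformity (the rate in dominated convergence could in principle depend on the control), but your second argument does: the It\^o--formula estimate on $k_T$ via $\phi(x)=|x-x_\ast|^2$ uses only the bound on $b$, $\sigma$ and the geometry of $\oo$, so it yields $\W_1(\mu_s,\mu_t)\leq C\sqrt{t-s}$ with $C$ independent of $\bm\alpha$. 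So for the stated lemma your short proof suffices, while your sketched quantitative version is what is actually needed downstream.
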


\begin{proof}
  If $s=t_0$,
  for any Lipschitz continuous function $h$ with
  $|h|_{Lip}:= \sup\limits_{x\neq y}\frac{|h(x)-h(y)|}{|x-y|}\leq 1$,
  by \eqref{rep} and \eqref{heat},
  \begin{align*}
    &\Big|\int_{\bar\oo} h(x) \d \mu_t(x)-\int_{\bar\oo} h(x)\d \mu_{t_0}(x)\Big|\\
    &=\Big|\int_{\bar\oo}\int_{\bar \oo} h(x)p(t_0,y; t,x)\d \mu_{t_0}(y)\d x-\int_{\bar\oo}h(y)\d \mu_{t_0}(y)\Big|\\
    &\leq \int_{\bar \oo}\int_{\bar\oo} p(t_0,y; t,x)|h(x)-h(y)|\d x\d \mu_{t_0}(y)\Big|\\
    &\leq \int_{\bar\oo}\int_{\R^d}\frac{\kappa_1} {|t-t_0|^{\frac d2}} \e^{-\kappa_2\frac{|x-y|^2}{t-t_0}}|h(x)-h(y)|\d x\d \mu_{t_0}(y) \\
    &=\int_{\bar\oo}\int_{\R^d} \kappa_1\e^{-\kappa_2 |z|^2} |h(y+\sqrt{t-t_0} z)-h(y)|\d z\d \mu_{t_0}(y)\\
    &\leq \kappa_1 \sqrt{t-t_0} \int_{\R^d}  |z| \e^{-\kappa_2|z|^2}\d z,
  \end{align*}
  which tends to $0$ as $t-t_0\to 0$.
  Thus, the dual representation \eqref{dual} of $\W_1$ yields
  \[\lim_{t\to t_0}\W_1(\mu_t,\mu_{t_0})=0.\]

  If $s>t_0$, for any  continuous function $h$ with $|h|_{\infty}:=\sup_{x\in \R^d} |h(x)|\leq 1$,
  \begin{align*}
    &\Big|\int_{\bar\oo} h(x)\d \mu_t(x)-\int_{\bar\oo}h(x)\d \mu_s(x) \Big|\\
    &\leq \Big|\int_{\bar\oo} \!\int_{\bar\oo}\! |h(x)| \big|p(t_0, y; t,x)-p(t_0,y;s,x)\big|\d x\d \mu_{t_0}(y)\\
    &\leq \int_{\bar\oo}\int_{\bar\oo}\!\int_s^t \big|\partial_r p(t_0,y;r,x)\big|\d r\d x\d\mu_{t_0}(y)\\
    &\leq \int_{\bar\oo}\int_{\R^d} \int_{s}^t \frac{\kappa_1}{(r-t_0)^{\frac{d}{2}+1}}\e^{-\kappa_2 \frac{|x-y|^2}{r-t_0}}\d r\d x \d\mu_{t_0}(y) \\
    &\leq \kappa_1\ln\Big(\frac{t-t_0}{s-t_0}\Big)  \int_{\R^d} \e^{-\kappa_2|z|^2} \d z.
  \end{align*}
  This yields that
  \[\lim_{t\to s}\|\mu_t-\mu_s\|_{\mathrm{var}}=0,\]
  which implies the weak convergence of $\mu_t$ to $\mu_s$ and further
  \[\lim_{t\to s}\W_1(\mu_t,\mu_s)=0.\]
  The proof is complete.
\end{proof}

\begin{mythm}\label{thm-3.11}
Under the conditions $\mathrm{(H_1)}$-$\mathrm{(H_3)}$, the value function $V(t,\mu)$ given in \eqref{a-2} is a viscosity solution to the HJB equation \eqref{e-1}.
\end{mythm}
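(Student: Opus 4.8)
The plan is to obtain the two viscosity inequalities of Definition~\ref{def-5} from the dynamic programming principle (Proposition~\ref{prop-2}), combined with the chain rule along the law flow of the controlled process provided by Theorem~\ref{thm-3.7} (see also Theorem~\ref{thm-3.6}). The terminal condition $V(T,\mu)=\int_{\oo}g(x,\mu)\,\d\mu(x)$ is immediate from \eqref{b-2}--\eqref{a-2}, and $V$ is continuous by Proposition~\ref{prop-1}, so the definitions of Definition~\ref{def-5} apply. The starting point is a time-dependent form of \eqref{d-6.5}: for $\psi\in C_{L,b}^{1,1}([0,T)\times\pb(\bar\oo))$, any feedback control $\alpha_t=F(t,X_t,\law_{X_t})$ with law flow $\mu_t=\law_{X_t}$, and $0\le t_1<t_2\le T$,
\[
\psi(t_2,\mu_{t_2})-\psi(t_1,\mu_{t_1})=\int_{t_1}^{t_2}\Big(\partial_t\psi(t,\mu_t)+\int_{\bar\oo}\big[\la b(t,x,\mu_t,\alpha_t),D^L\psi(t,\mu_t)(x)\raa+\tfrac12\mathrm{tr}\big(A\nabla_xD^L\psi(t,\mu_t)(x)\big)\big]\,\d\mu_t(x)\Big)\,\d t,
\]
which follows from \eqref{d-6.5} together with the $C^1$-in-time regularity of $\psi$ and a routine mollification; no expectation appears since the law flow is deterministic once the feedback law is fixed.

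For the subsolution property, let $(t_0,\mu_0)$ be a maximum point of $V-\psi$, normalized so that $V\le\psi$ with equality at $(t_0,\mu_0)$. For any feedback control $\balpha\in\Pi_{t_0,\mu_0}$ with law $F\in\wt\Pi$ and flow $(\mu_t)$ issued from $\mu_0$ at time $t_0$, Proposition~\ref{prop-2} gives $V(t_0,\mu_0)\le\E\big[\int_{t_0}^{t_0+h}\vartheta(r,X_r,\mu_r,\alpha_r)\,\d r\big]+V(t_0+h,\mu_{t_0+h})$; substituting $V\le\psi$, invoking the chain rule above, dividing by $h$ and letting $h\downarrow0$ — using Lemma~\ref{lem-5} for the $\W_1$-continuity of $t\mapsto\mu_t$ at $t_0$, Lemma~\ref{lem-2} for the continuity in $\mu$ of the spatial integrals, and the regularity of $\psi$ in $t$ — yields $0\le\partial_t\psi(t_0,\mu_0)+\mathcal{H}^F(t_0,\mu_0)$, where $\mathcal{H}^F(t,\mu)$ is the modified Hamiltonian obtained from \eqref{e-v} by replacing the control $\alpha$ with $F(t,x,\mu)$ inside the spatial integral. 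Minimizing over $F\in\wt\Pi$ — and using that these feedback laws are rich enough to realize, up to an arbitrary error, the pointwise-in-$x$ minimizer over the compact set $U$ of $\alpha\mapsto\la b(t_0,x,\mu_0,\alpha),D^L\psi(t_0,\mu_0)(x)\raa+\vartheta(t_0,x,\mu_0,\alpha)$ — turns this into $0\le\partial_t\psi(t_0,\mu_0)+\inf_{\alpha\in U}\mathcal{H}(t_0,\mu_0,\psi,D^L\psi,\alpha)$, which is \eqref{e-2}.

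For the supersolution property, let $(t_0,\mu_0)$ be a minimum point of $V-\psi$, normalized so that $V\ge\psi$ with equality at $(t_0,\mu_0)$. Given $\veps,h>0$, Proposition~\ref{prop-2} furnishes a feedback control $\balpha^\veps\in\Pi_{t_0,\mu_0}$, with law $F^\veps\in\wt\Pi$ and flow $(\mu_t^\veps)$, such that $V(t_0,\mu_0)+\veps h\ge\E\big[\int_{t_0}^{t_0+h}\vartheta(r,X_r^\veps,\mu_r^\veps,\alpha_r^\veps)\,\d r\big]+V(t_0+h,\mu_{t_0+h}^\veps)$. Using $V\ge\psi$ and the chain rule, and bounding the bracketed integrand from below pointwise in $x$ before integrating against $\mu_t^\veps$ — which, since in \eqref{e-v} the control is minimized inside the spatial integral, produces $\inf_{\alpha\in U}\mathcal{H}(t,\mu_t^\veps,\psi,D^L\psi,\alpha)$ — then dividing by $h$ and letting $h\downarrow0$ gives $\veps\ge\partial_t\psi(t_0,\mu_0)+\inf_{\alpha\in U}\mathcal{H}(t_0,\mu_0,\psi,D^L\psi,\alpha)$; letting $\veps\downarrow0$ yields \eqref{e-3}.

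The delicate step is the passage $h\downarrow0$ in the supersolution argument: because the admissible controls are of feedback form, the $\veps$-optimal control $F^\veps$ — and hence the curve $(\mu_t^\veps)$ along which $\psi$ is expanded — depends on $\veps$, so one cannot simply appeal to the continuity of a single fixed absolutely continuous curve. This is precisely where the decoupling of \eqref{a-1} into a linear time-inhomogeneous parabolic problem with Neumann boundary condition, and the ensuing Gaussian bounds \eqref{heat}, \eqref{rep}, are used: under $\mathrm{(H_1)}$ the drift is bounded by a constant that does not depend on the feedback law, so the bound $\W_1(\mu_t^\veps,\mu_0)\le\kappa_1\sqrt{t-t_0}\int_{\R^d}|z|\e^{-\kappa_2|z|^2}\,\d z$ extracted from the proof of Lemma~\ref{lem-5} holds \emph{uniformly in $\veps$}, which together with Lemma~\ref{lem-2} makes the limit legitimate. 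I expect this uniform-in-control modulus of continuity of $t\mapsto\mu_t$ at the left endpoint to be the main obstacle; a secondary point, common to both inequalities, is to verify that the feedback class $\wt\Pi$ is rich enough to realize the pointwise-in-$x$ infimum over $U$ that appears in the Hamiltonian, which relies on the compactness of $U$ and the continuity of $b$, $\vartheta$ and $D^L\psi$.
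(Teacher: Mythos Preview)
Your overall architecture — DPP + chain rule (Theorem~\ref{thm-3.7}) + continuity (Lemmas~\ref{lem-2}, \ref{lem-5}, heat-kernel bounds) — is the same as the paper's, and your identification of the \emph{uniform-in-control} modulus of continuity of $t\mapsto\mu_t$ as the crux of the supersolution step is exactly right. But your plan departs from the paper at two points, and one of them is a genuine gap.

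\medskip
\noindent\textbf{Subsolution.} The paper simply takes the \emph{constant} feedback $F\equiv\alpha$ for each fixed $\alpha\in U$; this is trivially in $\wt\Pi$, and since in \eqref{e-1} the infimum is over a \emph{single} $\alpha\in U$ applied to the whole spatial integral in \eqref{e-v}, the inequality $0\le\partial_t\psi(t_0,\mu_0)+\mathcal H(t_0,\mu_0,\psi,D^L\psi,\alpha)$ for every $\alpha$ already yields \eqref{e-2}. Your ``richness'' argument — approximating the pointwise-in-$x$ minimizer by feedbacks in $\wt\Pi$ — is therefore unnecessary, and is also not obviously available: the pointwise $\arg\min$ over $U$ need not be Lipschitz in $x$, so it may fail to lie in $\wt\Pi$.

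\medskip
\noindent\textbf{Supersolution.} Here you misread the Hamiltonian: in \eqref{e-v} the control is \emph{not} minimized inside the spatial integral; $\mathcal H$ is a function of a single $\alpha\in U$ and the $\inf_{\alpha\in U}$ in \eqref{e-1} sits outside. Your pointwise-in-$x$ lower bound on the integrand produces
\[
\int_{\bar\oo}\inf_{\alpha\in U}\big[\la b(t,x,\mu_t^\veps,\alpha),D^L\psi(t,\mu_t^\veps)(x)\raa+\vartheta(t,x,\mu_t^\veps,\alpha)\big]\,\d\mu_t^\veps(x),
\]
which in general is \emph{strictly smaller} than $\inf_{\alpha}\int_{\bar\oo}[\cdots]\,\d\mu_t^\veps=\inf_{\alpha}\mathcal H(t,\mu_t^\veps,\psi,D^L\psi,\alpha)$. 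Hence from $\veps\ge\partial_t\psi+\int\inf_\alpha[\cdots]$ you cannot deduce $\veps\ge\partial_t\psi+\inf_\alpha\mathcal H$, and \eqref{e-3} does not follow. The paper sidesteps this by arguing by contradiction: assuming \eqref{g-3}, it combines the uniform-in-$\alpha$ continuity of $\mathcal H$ (Lemma~\ref{lem-2}) with the uniform-in-control law continuity (Lemma~\ref{lem-5}, via \eqref{heat}--\eqref{rep}) to obtain a neighborhood lower bound \eqref{g-4} valid for \emph{every} admissible feedback, and then feeds the DPP with near-optimal controls $\balpha_n$ on shrinking windows $[t_0,t_0+\delta_n]$ with errors $\gamma_n=o(\delta_n)$ to reach a contradiction.
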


\begin{proof}
 \emph{Viscosity subsolution}\ Let $(t_0,\mu_{t_0})\in [0,T)\!\times\! \pb (\bar\oo)$ and $\psi\in C_{L,b}^{1,1}\big([0,T)\!\times\! \pb (\bar\oo)\big)$ be a test function such that
 \[0=(V-\psi)(t_0,\mu_{t_0})=\max\big\{(V-\psi)(t,\mu);\ (t,\mu)\in [0,T)\times \pb (\bar\oo)\big\}. \]
 Let $(X_t,k_t)$ be the solution to SDE \eqref{a-1} associated with the control $\alpha_t\equiv \alpha\in U$ with initial value $\law_{X_{t_0}}=\mu_{t_0}$. Denote $\mu_t=\law_{X_t}$ for $t\geq t_0$.
 By the dynamic programming principle,   we have that
 \[V(t_0,\mu_{t_0})\leq \E \Big[\int_{t_0}^t \vartheta (r,X_r,\mu_r,\alpha)\d r+V(t,\mu_t)\Big],
 \]
 which yields that
 \begin{equation}\label{g-0}
   \psi(t,\mu_t)-\psi(t_0,\mu_{t_0})+\int_{t_0}^t\!\int_{\bar\oo} \vartheta (r,x,\mu_r,\alpha)\d\mu_r(  x)\d r\geq 0.
 \end{equation}
 By Theorem \ref{thm-3.7}, we get
 \begin{align*}
   \int_{t_0}^t\Big[ \partial_r \psi(r,\mu_r)\!&+\!\int_{\bar\oo}\!\Big(\la b(r,x,\mu_r,\alpha), D^L \psi(r, \mu_r)(x)\raa +\frac12\mathrm{tr}\big(A \nabla_x D^L \psi(r,\mu_r)(x)\big)\Big)\d\mu_r(x)\\
   &\qquad   +\int_{\bar\oo}\! \vartheta(r,x,\mu_r,\alpha)\d \mu_r(x)\Big] \d r \geq 0.
 \end{align*}
Using Lemmas \ref{lem-2} and \ref{lem-5},
 dividing both sides of the previous inequality with $t-t_0$, and letting $t\downarrow t_0$, we obtain that
 \begin{align*}
 -\partial_t \psi(t_0,\mu_{t_0})&-\!\int_{\bar\oo}\!\Big(\la b(t_0,x,\mu_{t_0},\alpha), D^L \psi(t_0, \mu_{t_0})(x)\raa +\frac12\mathrm{tr}\big(A \nabla_x D^L \psi(t_0,\mu_{t_0})(x)\big)\Big)\d\mu_{t_0}(x)\\
 &-\!\int_{\bar\oo} \vartheta (t_0,x,\mu_{t_0},\alpha) \d\mu_{t_0}(x)\leq 0.
 \end{align*}
 By the arbitrariness of $\alpha\in U$, we obtain that
 \[-\partial_t\psi(t_0,\mu_0)-\inf_{\alpha\in U} \mathcal{H}(t_0,\mu_{t_0}, \psi,D^L \psi,\alpha) \leq 0.\] Hence, $V$ is a viscosity subsolution to \eqref{e-1}.

 \emph{Viscosity supersolution} \ Let $(t_0,\mu_{t_0})\in [0,T)\!\times \! \pb (\bar\oo)$ and $\psi\in C_{L,b}^{1,1}\big([0,T)\!\times\!\pb (\bar\oo)\big)$ be a test function such that
 \begin{equation}\label{g-1}
 0=(V-\psi)(t_0,\mu_{t_0})=\min\big\{(V-\psi)(t,\mu);\ (t,\mu)\in [0,T)\times \pb (\bar\oo)\big\}.
 \end{equation}
 We shall prove
 \begin{equation}\label{g-2}
 -\partial_t \psi(t_0,\mu_{t_0})-\inf_{\alpha\in U} \mathcal{H}(t_0,\mu_{t_0},\psi,D^L \psi, \alpha) \geq 0
 \end{equation} by contradiction. Suppose
 \begin{equation}\label{g-3}
  -\partial_t \psi(t_0,\mu_{t_0})-\inf_{\alpha\in U} \mathcal{H}(t_0,\mu_{t_0},\psi,D^L \psi, \alpha)<0.
  \end{equation}

 %
%
  For any ${\bm \alpha}\!\in\! \Pi_{t_0,\mu_{t_0}}$, the associated controlled process $(X_t^{t_0,\mu_{t_0}},k_t^{t_0,\mu_{t_0}})_{t\in [t_0,T]}$ is given in \eqref{a-1}. Under the nondegenerate condition $\mathrm{(H_2)}$, the law of $X_t^{t_0,\mu_{t_0}}$ admits a density $\rho_t(x)$ w.r.t. the Lebesgue measure $\d x$ in $\R^d$.  Due to \eqref{rep}, $\rho_t$ admits a representation
  \[\rho_t(x)=\int_{\bar\oo} p(t_0,z; t,x)\d\mu_{t_0}(z) .\]
  %
  Therefore, by Lemmas \ref{lem-2} and \ref{lem-5}, there exist $\veps>0$ and $\zeta_1>0$ such that for any $|t-t_0|<\zeta_1$ and any ${\bm \alpha}\in \Pi_{t_0,\mu_{t_0}}$,
  \begin{equation}\label{g-4}
  \begin{aligned}
     -\partial_t\psi(t,\mu_t)&-\int_{\bar\oo}\!\Big( \la b(t, x,\mu_t,\alpha_t),D^L\psi(t,\mu_t)(x)\raa \!+\!\frac12\mathrm{tr} \big( A\nabla_x D^L\psi(t,\mu_t)(x)\big) \Big)\d \mu_t(x)\\
     &-\int_{\bar\oo}\vartheta(t,x,\mu_t,\alpha_t)\d \mu_t(x) \leq -\veps.
  \end{aligned}
  \end{equation}

  Take two sequence $\delta_n,\,\gamma_n>0$, $n\geq 1$, satisfying
  \[\delta_n< \zeta_1 ,\quad \lim_{n\to \infty} \gamma_n/\delta_n=0.
  \]
  By the dynamic programming principle, there exists a sequence of admissible feedback controls ${\bm \alpha_n}\in\Pi_{t_0,\mu_{t_0}}$ such that
  \[V(t_0,\mu_0)\geq \E\Big[\int_{t_0}^{t_0+\delta_n} \!\vartheta (r,X_r^n,\mu_r^n,\alpha_r^n)\d r+V(t_0+\delta_n,\mu_{t_0+\delta_n}^n)\Big]-\gamma_n,
  \] where $(X_t^n)$ denotes the controlled process associated with ${\bm \alpha_n}$, $\mu_t^n$ denotes the law of $X_t^n$. Due to \eqref{g-1},
  \[\psi(t_0,\mu_0)\geq \E\Big[\int_{t_0}^{t_0+\delta_n}\!\vartheta (r, X_r^n,\mu_r^n,\alpha_r^n)\d r+\psi(t_0+\delta_n, \mu_{t_0+\delta_n}^n)\Big]-\gamma_n.\]
  Hence,
  \begin{align*}
    \frac{\gamma_n}{\delta_n} &\geq \frac1{\delta_n}\E\Big[\int_{t_0}^{t_0+\delta_n}\!\vartheta (r,X_r^n, \mu_r^n, \alpha_r^n)\d r+ \int_{t_0}^{t_0+\delta_n}\!\frac{\d }{\d r}\big(\psi(r,\mu_r^n)\big)\d r\Big].
  \end{align*}
  Since $\psi\in C_{L,b}^{1,1}\big([0,T)\!\times\!\pb(\bar\oo)\big)$, by Theorem \ref{thm-3.7} and \eqref{g-4},
  \begin{align*}
    \frac{\gamma_n}{\delta_n}&\geq \frac{1}{\delta_n}\int_{t_0}^{t_0+\delta_n}\!\!
    \Big[\partial_r\psi(r,\mu_r^n)\!+\!\int_{\bar\oo}\!\Big(\la b(r, x,\mu_r^n,\alpha_r^n), D^L\psi(r,\mu_r^n)(x)\raa \\
    &\qquad \qquad \qquad\quad  +\!\frac 12 \mathrm{tr}\big( A\nabla_xD^L \psi(r,\mu_r^n)(x)\big)\! +\! \vartheta(r,x,\mu_r^n ,\alpha_r^n)\Big)\d\mu_r^n(x) \Big]\d r\\
    &\geq \frac1{\delta_n}\int_{t_0}^{t_0+\delta_n} \veps\d r=\veps>0.
  \end{align*} Letting $n\to \infty$, this contradicts $\lim_{n\to \infty} \frac{\gamma_n}{\delta_n} =0$. Consequently, the assertion \eqref{g-4} is false, and $V(t,\mu)$ is a viscosity supersolution to the HJB equation \eqref{e-1}.
  In all, according to Definition \ref{def-5}, $V$ is a viscosity solution to \eqref{e-1}.
\end{proof}

\section{Comparison Principle for HJB equations}\label{sec-4}

In this part we proceed to study the uniqueness of viscosity solution to the HJB equation \eqref{e-1} associated with the intrinsic derivative.    To this aim, the crucial point is to find suitable test functions to approximate the viscosity solution. In the study of HJB equations on $\R^d$, the Euclidean distance $|x-y|^2$ plays important role in the argument of the comparison principle. On $\pb(\bar\oo)$, although  $\W_2$ is intrinsically differentiable (cf. Proposition \ref{lem-7} below or \cite[Theorem 8.4.7]{Amb}), the $L^2$-Wasserstein distance $\W_2$ is not smooth enough to establish the comparison principle for the HJB equation \eqref{e-1} on $\pb(\bar\oo)$, which will be clarified in the study below. Besides, $(\pb(\bar\oo),\W_2)$ is an infinite dimensional space, bounded sets are not necessary precompact, which is another crucial point to establish the comparison principle for \eqref{e-1}.

The regularity of $\W_2$ w.r.t.\,the intrinsic derivative depends heavily on the theory of optimal transport maps between probability measures, which essentially depends on the study of Monge-Amp\`ere equation. A large number of works have been devoted to the study of the Monge-Amp\`ere equation. We refer to the works of Trudinger-Wang \cite{TW08}, Caffarelli-McCann \cite{CM10}, and Chen et al. \cite{CLW21} amongst others. Let us recall a result in \cite{CLW21} to be used later.

\begin{mythm}[\cite{CLW21}, Theorem 1.1]\label{thm-4}
Suppose $\mathcal{O},\mathcal{O}^\ast$ are bounded convex domains in $\R^d$ with $C^{1,1}$ boundary. Suppose $u$ be a convex solution to the Monge-Amp\`ere equation
\begin{equation}\label{monge}
  \begin{cases}
    \mathrm{det}\big(  D^2 u(x)\big)=\frac{\rho(x)}{\tilde\rho(Du(x))},& x\in \mathcal{O},\\ D u(\mathcal{O})=\mathcal{O}^\ast,
  \end{cases}
\end{equation}
where $\mathrm{det}(B)$ stands for the determinant of matrix $B$. The following assertions hold.
\begin{itemize}
\item[$(\mathrm{i})$] If $\rho\in C^\beta(\bar{\mathcal{O}})$,
$\tilde \rho\in C^\beta(\bar{\mathcal{O}^\ast})$ for some $\beta\in (0,1)$, then
\[\|u\|_{C^{2,\beta}(\bar{\mathcal{O}})}\leq C,\]
where $C$ is a constant depending on $d,\beta,\rho,\tilde\rho,\mathcal{O}$, and $\mathcal{O}^\ast$.
\item[$(\mathrm{ii})$] If $\rho\in C^0(\bar{\mathcal{O}})$, $\tilde\rho\in C^0({\bar{\mathcal{O}^\ast}})$, then
\begin{gather*}
  \|u\|_{C^{1,\beta}(\bar{\mathcal{O}})}\leq C_\beta,\quad \beta\in (0,1);\qquad
  \|u\|_{W^{2,q}(\bar{\mathcal{O}})}\leq C_q,\quad q\geq 1,
\end{gather*}
where the constants $C_\beta$, $C_q$ depend on $d,\rho,\tilde \rho$, $\mathcal{O}$, $\mathcal{O}^\ast$, and on $\beta$, $q$ respectively. $\|\,\cdot\,\|_{W^{2,q}(\bar{\mathcal{O}})}$ denotes the Sobolev norm in the Sobolev space $W^{2,q}(\bar{\mathcal{O}})$.
\end{itemize}
\end{mythm}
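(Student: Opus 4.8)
Since Theorem~\ref{thm-4} is quoted verbatim from \cite{CLW21}, the proof is not reproduced here; what follows is the strategy one would use, so as to indicate why the stated estimates hold and which of their features we actually invoke. The starting point is optimal transport: as $\mathcal{O},\mathcal{O}^\ast$ are bounded and convex and $\rho,\tilde\rho$ are positive continuous densities with compatible total masses, Brenier's theorem produces a (unique up to a constant) convex potential $u$ whose gradient $Du$ is the optimal transport map pushing $\rho\,\d x$ on $\mathcal{O}$ forward to $\tilde\rho\,\d x$ on $\mathcal{O}^\ast$; convexity of the target domain $\mathcal{O}^\ast$ forces $Du(\mathcal{O})\subseteq\overline{\mathcal{O}^\ast}$, and the Aleksandrov change-of-variables formula then gives that $u$ solves \eqref{monge} a.e. with the second boundary condition $Du(\mathcal{O})=\mathcal{O}^\ast$. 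Positivity of $\rho$ on the compact $\bar{\mathcal O}$ (and of $\tilde\rho$ on $\bar{\mathcal O^\ast}$) yields two-sided bounds $0<\lambda\le\rho,\tilde\rho\le\Lambda$, which is the hypothesis needed to run the regularity machinery.

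For interior estimates one invokes Caffarelli's theory of the Monge--Amp\`ere equation: under the two-sided density bounds, $u$ is strictly convex and belongs to $C^{1,\alpha}_{\mathrm{loc}}(\mathcal{O})$; Caffarelli's interior Schauder estimate then upgrades this to $u\in C^{2,\beta}_{\mathrm{loc}}(\mathcal{O})$ once $\rho\in C^\beta(\bar{\mathcal O})$ and $\tilde\rho\in C^\beta(\bar{\mathcal O^\ast})$ (case (i)), while the interior $W^{2,q}$ estimate gives $u\in W^{2,q}_{\mathrm{loc}}(\mathcal{O})$ for every $q\ge1$ when $\rho,\tilde\rho$ are merely continuous (case (ii)), and the Sobolev embedding then also returns interior $C^{1,\beta}$ bounds. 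These are by now standard and contribute no genuine difficulty.

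The real content, and the part specific to \cite{CLW21}, is the passage from interior to \emph{global} estimates up to $\partial\mathcal O$ when the two domains are only $C^{1,1}$ (rather than smooth or uniformly convex). One localizes near a boundary point $x_0\in\partial\mathcal O$, uses the defining functions of $\mathcal O$ and $\mathcal O^\ast$ to build upper and lower barriers for $u$, and exploits the convexity of both domains to show that $Du$ maps a boundary half-ball in $\mathcal O$ onto a boundary neighbourhood in $\mathcal O^\ast$; this gives strict convexity of $u$ up to the boundary together with the obliqueness of the condition $Du(\mathcal O)=\mathcal O^\ast$. With obliqueness in hand, \eqref{monge} near the boundary becomes a uniformly oblique derivative problem for the Monge--Amp\`ere operator, and the Urbas / Lieberman--Trudinger boundary H\"older and Schauder estimates deliver $\|u\|_{C^{2,\beta}(\bar{\mathcal O})}\le C$ in case (i); in case (ii) one instead runs the boundary $W^{2,q}$ and boundary $C^{1,\beta}$ estimates. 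The main obstacle throughout is precisely that the barrier constructions and the obliqueness argument cannot lean on boundary curvature and must be carried out quantitatively in terms of the $C^{1,1}$ norms of the defining functions, so that all constants depend only on $d,\beta,q,\rho,\tilde\rho,\mathcal O,\mathcal O^\ast$; securing this uniformity is the technical heart of \cite{CLW21}, and it is this statement that we take as a black box in what follows.
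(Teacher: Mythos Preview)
Your proposal is appropriate: the paper does not prove Theorem~\ref{thm-4} at all but merely quotes it from \cite{CLW21} as an external input (``Let us recall a result in \cite{CLW21} to be used later''), and you correctly identify this. Your explanatory sketch of the Caffarelli interior theory plus the boundary obliqueness/barrier machinery of \cite{CLW21} is a reasonable summary of the original argument and goes beyond what the present paper provides, which is nothing.
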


Applying the theory on optimal transport maps between probability measures (cf. for example, \cite{Vil}), for two probability measures $\mu=\rho(x)\d x$ and $\nu=\tilde \rho(x)\d x$ on $\R^d$, there exists a convex function $u:\R^d\to\R^d$ such that the mapping $\mathcal{T}^\mu_\nu(x):=D u(x)$ satisfies
\begin{gather*}
  \nu=(\mathcal{T}_\nu^\mu)_\#\mu:=\mu\circ (\mathcal{T}^\mu_\nu)^{-1}, \ \text{i.e.}\ \int h(x)\d\nu(x)=\int h(\mathcal{T}_\nu^\mu(x))\d\mu(x),\quad \forall\,h\in \B_b(\R^d),\\
\W_2^2(\mu,\nu)=\int_{\R^d}\!\!|x-\mathcal{T}_\nu^\mu(x)|^2\d\mu(x).
\end{gather*}
Thus, $u$ is a solution to the Monge-Amp\`ere equation:
\begin{equation*}\mathrm{det}\big( D^2u(x) \big)=\frac{\rho(x)}{\tilde \rho(Du(x))}. \end{equation*}
Moreover, although $u$ is not unique, its gradient and hence the mapping $\mathcal{T}_\nu^\mu$ is unique and invertible. Also, the convexity of $u$ yields that $D^2 u\geq 0$.
Its inverse mapping pushes $\nu$ forward to $\mu$, i.e. $\mu=(\mathcal{T}^{\mu}_{\nu})^{-1}_{\#}\nu=\nu\circ \mathcal{T}_\nu^\mu$. Here we present a result on the optimal transport map for the cost function $c(x,y)=|x-y|^2$, and much effort has been devoted to the study on the general cost functions   and on general spaces such as Riemannian manifolds (cf. \cite{Amb,TW08,Vil}).

\begin{myprop}[Derivative of Wasserstein distance]\label{lem-7}
For each $\zeta\in \pb^r(\bar\oo)$, the associated functional $\mu\mapsto W_2^2(\mu, \zeta)
$ belongs to $C_{L,b}^{1}(\pb^r(\bar\oo))$ and
\begin{equation}\label{h-3}
D^L\W_2^2(\mu,\zeta)(x)=2\big(x-\mathcal{T}^{\mu}_{\zeta}(x)\big),\quad x\in \bar\oo,\, \mu\in \pb^r(\bar\oo),
\end{equation} where $\mathcal{T}_\zeta^\mu:\bar\oo\to \bar\oo$ denotes the unique optimal transport map such that \[\text{$(\mathcal{T}_\zeta^\mu)_{\#}\mu=\zeta$ and }\ \W_2^2(\mu,\zeta) =\int_{\bar\oo}|x-\mathcal{T}_\zeta^\mu(x)|^2\d\mu(x).\]
Furthermore, $x\mapsto D^L\W_2(\mu,\zeta)(x)$ is continuously differentiable. If $\mu,\mu_k\in \pb^r(\bar\oo)$, $k\geq 1$, satisfy $\lim_{k\to \infty }\W_2(\mu_k,\mu)=0$. Then, for any $\veps>0$,
\begin{gather}\label{get-1}
\lim_{k\to \infty}\mu_k\big(\{x\in \bar\oo;\ |\mathcal{T}_\zeta^{\mu_k}(x)-\mathcal{T}_{\zeta}^{\mu}(x)| >\veps\}\big)=0,\\
\label{get-2}
\lim_{k\to \infty} \int_{\bar\oo}\!|\mathcal{T}_{\zeta}^{\mu_k}(x)- \mathcal{T}_{\zeta}^{\mu}(x)|\d \mu_k(x)=0.
\end{gather}
\end{myprop}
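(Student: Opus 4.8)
The plan is to combine Brenier's polar factorization and the Monge--Amp\`ere regularity recalled in Theorem \ref{thm-4} with Kantorovich duality for the quadratic cost $c(x,y)=|x-y|^2$ on the compact set $\bar\oo$. I would first set up the relevant objects for a fixed pair $\mu,\zeta\in\pb^r(\bar\oo)$ with densities $\rho,\tilde\rho\in C^1(\bar\oo)$. Since $\bar\oo$ is bounded, $C^1(\bar\oo)\subset C^\beta(\bar\oo)$ for every $\beta\in(0,1)$, so by Theorem \ref{thm-4}(i) the Brenier potential $u$ with $\nabla u=\mathcal{T}_\zeta^\mu$ lies in $C^{2,\beta}(\bar\oo)$; hence $\mathcal{T}_\zeta^\mu$ is continuous on $\bar\oo$, the field $x\mapsto 2(x-\mathcal{T}_\zeta^\mu(x))=2(x-\nabla u(x))$ is continuously differentiable with $|2(x-\mathcal{T}_\zeta^\mu(x))|\le 2\,\mathrm{diam}(\bar\oo)$, and $\phi:=|\cdot|^2-2u\in C^{2,\beta}(\bar\oo)$ is an optimal Kantorovich potential for $(\mu,\zeta)$, so that $\nabla\phi=2(\mathrm{Id}-\mathcal{T}_\zeta^\mu)$, $\phi$ is Lipschitz on $\bar\oo$, and $\phi(x)+\phi^c(y)\le|x-y|^2$ with $\W_2^2(\mu,\zeta)=\int_{\bar\oo}\phi\,\d\mu+\int_{\bar\oo}\phi^c\,\d\zeta$.

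Next I would prove the differentiation formula \eqref{h-3} by bounding the difference quotient of $\mu\mapsto\W_2^2(\mu,\zeta)$ along $\mu_\veps:=\mu\circ(\mathrm{Id}+\veps v)^{-1}$, $v\in\mathscr{T}_\mu$, from two sides. For the upper bound, $x\mapsto(x+\veps v(x),\mathcal{T}_\zeta^\mu(x))$ pushes $\mu$ to a coupling of $\mu_\veps$ and $\zeta$, so
\[\W_2^2(\mu_\veps,\zeta)\le\int_{\bar\oo}|x+\veps v(x)-\mathcal{T}_\zeta^\mu(x)|^2\,\d\mu(x)=\W_2^2(\mu,\zeta)+2\veps\!\int_{\bar\oo}\!\la v(x),x-\mathcal{T}_\zeta^\mu(x)\raa\,\d\mu(x)+\veps^2\mu(|v|^2),\]
giving $\limsup_{\veps\downarrow0}\veps^{-1}(\W_2^2(\mu_\veps,\zeta)-\W_2^2(\mu,\zeta))\le 2\int_{\bar\oo}\la v,\mathrm{Id}-\mathcal{T}_\zeta^\mu\raa\,\d\mu$; a negligible correction obtained by composing $\mathrm{Id}+\veps v$ with the $1$-Lipschitz projection onto the convex set $\bar\oo$ (whose effect is $O(\veps^2)$ by the $C^{1,1}$ regularity of $\partial\oo$) ensures $\mu_\veps\in\pb(\bar\oo)$. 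For the lower bound, testing the fixed pair $(\phi,\phi^c)$ against any coupling of $(\mu_\veps,\zeta)$ gives $\W_2^2(\mu_\veps,\zeta)\ge\int_{\bar\oo}\phi\,\d\mu_\veps+\int_{\bar\oo}\phi^c\,\d\zeta$, and since $\phi\in C^1(\bar\oo)$ has bounded gradient, dominated convergence (with dominating function $\|\nabla\phi\|_\infty|v|\in L^1(\mu)$) yields
\[\liminf_{\veps\downarrow0}\frac{\W_2^2(\mu_\veps,\zeta)-\W_2^2(\mu,\zeta)}{\veps}\ge\int_{\bar\oo}\la\nabla\phi(x),v(x)\raa\,\d\mu(x)=2\int_{\bar\oo}\la v(x),x-\mathcal{T}_\zeta^\mu(x)\raa\,\d\mu(x).\]
Matching the two bounds shows $\mu\mapsto\W_2^2(\mu,\zeta)$ is intrinsically differentiable at every $\mu\in\pb^r(\bar\oo)$ with $D_v^L\W_2^2(\mu,\zeta)=2\int_{\bar\oo}\la v(x),x-\mathcal{T}_\zeta^\mu(x)\raa\,\d\mu(x)$, i.e.\ \eqref{h-3}; together with the first paragraph and the elementary $\W_2$-Lipschitz continuity of $\mu\mapsto\W_2^2(\mu,\zeta)$, this gives properties (i)--(ii) of Definition \ref{defn-3}.

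Finally I would establish the stability statements \eqref{get-1}--\eqref{get-2}, from which the continuity property (iii) follows. Given $\mu_k\to\mu$ in $\W_2$ with $\mu_k,\mu\in\pb^r(\bar\oo)$, put $\gamma_k:=(\mathrm{Id},\mathcal{T}_\zeta^{\mu_k})_\#\mu_k\in\pb(\bar\oo\times\bar\oo)$. Since $\bar\oo\times\bar\oo$ is compact, $(\gamma_k)$ is narrowly relatively compact; any narrow limit $\gamma$ has marginals $\mu$ and $\zeta$ and is optimal for $(\mu,\zeta)$ because $\int|x-y|^2\,\d\gamma_k=\W_2^2(\mu_k,\zeta)\to\W_2^2(\mu,\zeta)$, and as $\mu\ll\mathrm{Leb}$ the optimal plan is unique, so $\gamma=(\mathrm{Id},\mathcal{T}_\zeta^\mu)_\#\mu$ and the whole sequence converges, $\gamma_k\to\gamma$ narrowly. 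Since $\mathcal{T}_\zeta^\mu$ is continuous, $(x,y)\mapsto|y-\mathcal{T}_\zeta^\mu(x)|^2$ is bounded and continuous on $\bar\oo\times\bar\oo$, whence
\[\int_{\bar\oo}|\mathcal{T}_\zeta^{\mu_k}(x)-\mathcal{T}_\zeta^\mu(x)|^2\,\d\mu_k(x)=\int_{\bar\oo\times\bar\oo}|y-\mathcal{T}_\zeta^\mu(x)|^2\,\d\gamma_k(x,y)\longrightarrow\int_{\bar\oo\times\bar\oo}|y-\mathcal{T}_\zeta^\mu(x)|^2\,\d\gamma(x,y)=0;\]
then \eqref{get-2} follows by the Cauchy--Schwarz inequality (the $\mu_k$ being probability measures) and \eqref{get-1} by Markov's inequality, and \eqref{get-1} is exactly the continuity property (iii). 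I expect the main obstacle to be the lower bound in the difference quotient --- producing a Lipschitz (here even $C^{2,\beta}$) Kantorovich potential $\phi$ with $\nabla\phi=2(\mathrm{Id}-\mathcal{T}_\zeta^\mu)$ and justifying the passage to the limit under the integral, together with the bookkeeping needed to keep $\mu_\veps$ inside $\pb(\bar\oo)$; the regularity assertions and the stability \eqref{get-1}--\eqref{get-2} then reduce to Theorem \ref{thm-4} and soft compactness in $\pb(\bar\oo\times\bar\oo)$.
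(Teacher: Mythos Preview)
Your proposal is correct and reaches the same conclusions as the paper, but the route you take for the derivative formula \eqref{h-3} is genuinely different. The paper does not argue via Kantorovich duality; instead it invokes \cite[Theorem~8.4.7]{Amb} directly to obtain
\[
\frac{\d}{\d\veps}\Big|_{\veps=0}\W_2^2(\mu_\veps,\zeta)=\int_{\bar\oo^2}2\la x_1-x_2,v(x_1)\raa\,\d\gamma(x_1,x_2)
\]
for the optimal plan $\gamma$, and then identifies $\gamma=(\mathrm{Id}\times\mathcal{T}_\zeta^\mu)_\#\mu$ via Brenier and the Monge--Amp\`ere regularity of Theorem~\ref{thm-4}. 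Your two-sided bound (upper bound by the explicit coupling $(x+\veps v(x),\mathcal{T}_\zeta^\mu(x))$, lower bound by freezing the $C^{2,\beta}$ Kantorovich potential $\phi=|\cdot|^2-2u$) is a self-contained substitute for that citation and has the advantage of being elementary; the paper's approach has the advantage of being a one-line appeal to a known differentiation theorem for $\W_2^2$ along absolutely continuous curves.

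For the stability part your argument and the paper's are close cousins: both establish narrow convergence $\gamma_k\to\gamma=(\mathrm{Id},\mathcal{T}_\zeta^\mu)_\#\mu$ by compactness plus uniqueness of the optimal plan. The paper then uses a Portmanteau argument on the closed set $B_\veps=\{(x,y):|\mathcal{T}_\zeta^\mu(x)-y|\ge\veps\}$ to get \eqref{get-1} first, and deduces \eqref{get-2} by splitting the integral and bounding by $\mathrm{diam}(\bar\oo)$. You instead test against the bounded continuous function $(x,y)\mapsto|y-\mathcal{T}_\zeta^\mu(x)|^2$ to get $L^2(\mu_k)$-convergence directly, and then read off \eqref{get-2} via Cauchy--Schwarz and \eqref{get-1} via Markov; this is a slightly slicker packaging of the same idea and in fact yields the marginally stronger $L^2$ statement.
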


\begin{proof}
For $\mu\in \pb^r(\bar\oo)$, and for any tangent vector $v\in \mathscr{T}_\mu$, the curve $\mu_\veps:= \mu\circ(\mathrm{Id}+\veps v)^{-1}$ for $\veps\in [0,1]$ is an absolutely continuous curve in $\pb(\bar\oo)$.
  According to \cite[Theorem 8.4.7]{Amb},
  \begin{equation}\label{h-4}
  \frac{\d}{\d\veps}\Big|_{\veps=0}\!\W_2^2(\mu_\veps,\zeta)= \int_{\bar\oo^2}\!2\la x_1-x_2,v(x_1)\raa\d\gamma(x_1,x_2),
  \end{equation} where $\gamma$ is a probability measure on $\bar\oo\times\bar\oo$ satisfying
  \[\int_{\bar\oo^2}\!f(x_1)+g(x_2)\d\gamma(x_1,x_2) =\int_{\bar\oo} f(x_1)\d \mu(x_1)+\int_{\bar\oo} \!g(x_2)\d\zeta(x_2),\quad f,g\in \mathcal{B}_b(\bar\oo),\]
  and
  \[\int_{\bar\oo^2}\!|x_1-x_2|^2\d\gamma(x_1,x_2) =\W_2^2(\mu,\zeta).\]
  By virtue of the results on optimal transport maps (cf. e.g. \cite[Chapter 6]{Amb}), since
  $\mu\in \pb^r(\bar\oo)$ admits a density w.r.t.\,the Lebesgue measure, the previous optimal plan $\gamma$ is uniquely determined by
  \[\gamma=(\mathrm{Id}\times \mathcal{T}_\zeta^\mu)_{\#}\mu. \]
  Moreover, there exists a function $u:\bar\oo\to \R$ satisfying the Monge-Amp\`ere equation \eqref{monge} with $\bar\oo^\ast=\bar\oo$, $\rho=\d\mu/\d x$ and $\tilde \rho=\d\zeta/\d x$ such that
  \[\mathcal{T}_\zeta^\mu(x)=D u(x).\]
  By Theorem \ref{thm-4}, as $\rho,\tilde \rho\in C^1(\bar\oo)$, for each $\beta\in (0,1)$, there exists a constant $C>0$ such that
  \[\|u\|_{C^{2,\beta}(\bar{\oo})}\leq C.\]
  This yields that $\mathcal{T}_\zeta^\mu=D u$ is in $C^{1,\beta}(\bar{\oo})$. Consequently, we can rewrite \eqref{h-4} to
  \[\frac{\d}{\d \veps}\Big|_{\veps=0}\W_2^2(\mu_\veps,\zeta)=\int_{\bar\oo}2\la x_1-\mathcal{T}_\zeta^\mu(x_1),v(x_1)\raa \d\mu(x_1), \quad v\in \mathscr{T}_\mu.\]
  This yields that $\W^2_2(\cdot,\zeta)$ is intrinsically differentiable at $\mu$ with  \[D^L\W_2^2(\mu,\zeta)(x)=2(x-\mathcal{T}^\mu_{\zeta}(x)).\]
   Moreover, as $u\in C^{2,\beta}(\bar\oo)$, $x\mapsto D^L\W_2^2(\mu,\zeta)(x)=2(x-Du(x))$ is continuous in $x\in \bar{\oo}$. Obviously, $D^L\W_2^2(\mu,\zeta)$ is also bounded as $\bar\oo$ is compact.

 Next, we proceed to consider the continuity of $\mu\mapsto D^L \W_2^2(\mu,\zeta)(x)$ from $\pb^r(\bar\oo)$ to $L^1(\bar\oo)$.  This is useful to study the stability of the optimal transport maps between probability measures.

  Let  $\mu_k,\mu \in \pb^r(\bar\oo)$, $k\geq 1$, satisfying $\lim_{k\to \infty}\W_2(\mu_k,\mu)= 0$. For any $\veps>0$ define
  \[B_\veps=\big\{(x,y)\in \bar{\oo}\times \bar{\oo};\, |\mathcal{T}_\zeta^{\mu}(x)-y|\geq \veps\big\}.\]
  Put $\gamma_k=(\mathrm{Id}, \mathcal{T}_\zeta^{\mu_k})_{\#}\mu_k\in \mathscr{C}(\mu_k,\zeta)$ is the optimal transport plan of $\mu_k$ and $\zeta$ for $k\geq 1$, and $\gamma=(\mathrm{Id},\mathcal{T}_\zeta^{\mu})_{\#}\mu\in \mathscr{C}(\mu,\zeta)$, where $\mathrm{Id}$ denotes the identity map. Since $\mu$ is absolutely w.r.t.\, the Lebesgue measure, $\gamma$ is the unique optimal transport plan in $\mathscr{C}(\mu,\zeta)$ relative to $\W_2$ distance (cf. \cite{Vil}). Due to Theorem \ref{thm-4}, $x\mapsto \mathcal{T}_{\zeta}^{\mu}(x)=Du(x)$ is continuous. Together with the compactness of $\bar{\oo}$, we get that $B_\veps$ is closed.

  By virtue of the weak convergence of $\mu_k$  to $\mu$ and the uniqueness of optimal transport map $\gamma$, we have $\gamma_k$ converges weakly to $\gamma\in \mathscr{C}(\mu,\zeta)$ as $k\to \infty$ by \cite[Theorem 5.20]{Vil}. Hence,
  \begin{align*}
    0=\gamma(B_\veps)&\geq \limsup_{k\to \infty}\gamma_k(B_\veps)\\
    &=\limsup_{k\to \infty} \gamma_k\big(\{(x,y)\in \bar{\oo}\times \bar{\oo}; |\mathcal{T}^{\mu}_{\zeta}(x)-y|\geq \veps\}\big)\\
    &=\limsup_{k\to \infty}\mu_k\big(\{x\in \bar{\oo}; |\mathcal{T}^{\mu}_{\zeta}(x)- \mathcal{T}^{\mu_k}_{\zeta}(x)|\geq \veps\}\big).
  \end{align*}
  Therefore, this implies immediately \eqref{get-1}.

  Furthermore,
  \begin{align*}
    &\lim_{k\to \infty} \int_{\bar\oo}\big|\mathcal{T}^{\mu_k}_\zeta(x)- \mathcal{T}^\mu_\zeta(x)\big|\d \mu_k(x)\\
    &\leq \veps+\lim_{k\to \infty} \int_{\big|\mathcal{T}^{\mu_k}_\zeta(x)- \mathcal{T}^\mu_\zeta(x)\big|\geq \veps} \big|\mathcal{T}^{\mu_k}_\zeta(x)- \mathcal{T}^\mu_\zeta(x)\big|\d\mu_k(x)\\
    &\leq \veps+\mathrm{diam}(\bar\oo)\limsup_{k\to \infty} \mu_k\big(\big \{x\in \bar\oo;\ \big|\mathcal{T}^{\mu_k}_\zeta(x)- \mathcal{T}^\mu_\zeta(x)\big|\geq \veps\big\}\big)\\
    &\leq \veps,
  \end{align*} where $\mathrm{diam}(\bar\oo)=\sup\{|x-y|;\,x,y\!\in\! \bar\oo\}<\!\infty$ since $\bar\oo$ is compact.
  The arbitrariness of $\veps$ implies
  \[\lim_{k\to \infty} \int_{\bar\oo}\big|\mathcal{T}^{\mu_k}_\zeta(x)- \mathcal{T}^\mu_\zeta(x)\big|\d \mu_k(x)=0.\]
  Then \eqref{get-2} holds and
  the proof is complete.
\end{proof}

\begin{myrem}\label{rem-1}
From Proposition \ref{lem-7} we can see that the square of Wasserstein distance $ \mu\mapsto \W_2^2(\mu,\zeta)$ is intrinsically differentiable on the subset $\pb^r(\bar\oo)$. To ensure the existence of second order differentiability of $x\mapsto D^L \W_2^2(\mu,\zeta)(x)$, further smoothness condition on the densities of $\mu$ and $\zeta$ is needed, which is needed to act as a smooth approximation function to the HJB equation \eqref{e-1}.  However, the completion of $\pb^r(\bar\oo)$ under the metric $\W_2$ will be $\pb(\bar\oo)$, which cannot guarantee the desired smoothness of   densities.  Thus, $\W_2^2$ is not an appropriate smooth approximation function to study the uniqueness of viscosity solution to the HJB equation \eqref{e-1}.
\end{myrem}

In the recent work, Burzoni et al. \cite{BIRS} studied the optimal control problem for McKean-Vlasov jump-diffusion processes, and developed the theory of viscosity solution to HJB equations on Wasserstein space in terms of linear functional derivative. To establish the comparison principle, they constructed a distance-like function for two probability measures by
\[d(\mu,\nu)=\sum_{j=1}^\infty c_j\la \mu-\nu, f_j\raa^2,\]
where the countable set $\{f_j\}_{j\in \N}$ is carefully constructed so that the linear functional derivative of $d$ can be estimated by itself. The construction of $\{f_j\}_{j\in \N}$ is very subtle especially in the presence of jumps in the controlled process.
\cite{BIRS}  only constructed $d(\mu,\nu)$ on the Wasserstein space  $\pb(\R)$ over the real line $\R$.

In this work we shall generalize the construction of $d(\mu,\nu)$ in \cite{BIRS} to the Wasserstein space over  $\oo\!\subset\!\R^d$. Moreover, we shall show that such kind of distance-like function is also useful to establish the comparison principle for HJB equations on the Wasserstein space with intrinsically differential structure. Unfortunately,  another  assumption on the drift $b$ is  needed like in \cite{BIRS}, that is,  the drift $b(t,x,\mu,\alpha)$ cannot not depend on variable $x$ and depends on $\mu$ via its moments.

Before establishing the comparison principle, we  introduce the generalization of the distance-like function $d(\mu,\nu)$ on $\pb(\bar\oo)$. Let us begin with the 1-dimensional case by recalling the construction in \cite{BIRS}. Since our controlled processes are diffusion processes without jumps, we can simplify the expression $\{f_j\}_{j\in \N}$ in \cite{BIRS}.

\begin{mydefn}\label{defn-4.1}
A set of polynomials $\chi$ is said to have $(\ast)$-property if it satisfies that
\[\text{for any $f\in \chi$, $f^{(i)}\in \chi$, $\forall\,i\geq 0$},\]
where $f^{(i)}$ denotes $i$th order derivative of $f$ with $f^{(0)}=f$.
\end{mydefn}
For any given polynomial $f$, let $\chi(f)$ be the smallest set of polynomials with $(\ast)$-property that includes $f$. So,
\[\chi(f)=\big\{f^{(i)}; i\geq 0\big\},\]
and $\chi(f)$ contains finite number of polynomials for any polynomial $f$.
Put
\[\wt{\Theta}=\bigcup_{j=1}^\infty \chi(x^j).\]
Then $\wt{\Theta}$ contains all monomials $\{x^j\}_{j=1}^\infty$, it is countable and $\chi(f)\subset \wt{\Theta}$ for every $f\in \wt{\Theta}$. Let $\{f_j\}_{j=1}^\infty$ be an enumeration of $\wt{\Theta}$, which is fixed in the sequel. We refer to \cite{BIRS} for more discussion on $\chi,\,\chi(f)$ and $(\ast)$-property.

Now we generalize the previous notions to multidimensional situation.
Denote $\Z_+=\{0,1,2,\ldots\}$, and ${\bm x}^{\bm{n}}=x_1^{n_1}x_2^{n_2}\cdots x_d^{n_d}$ for $\bm{n}=(n_1,\ldots,n_d)\!\in \!\Z_+^d$, $\bm{x}=(x_1,\ldots,x_d)\!\in \!\R^d$. Let $|\bm{n}|=n_1+ \cdots+n_d$. For $\mu\in \pb(\bar\oo)$ and $f:\bar\oo\to \R^k$, denote by $\la \mu, f\raa=\int_{\bar\oo} f(x)\d \mu(x)$. Based on the above fixed enumeration $\{f_j\}_{j=1}^\infty$ of $\wt \Theta$, we define
\[f_{\bm{n}}(\bm x)=f_{n_1}(x_1)f_{n_2}(x_2)\cdots f_{n_d}(x_d),\quad \text{for}\ \bm{n}=(n_1,\ldots,n_d),\]
and
\[s_{\bm{n}}=1+\sup_{\mu\in\pb(\bar\oo)}\la \mu, f_{\bm n}\raa^2.\]
Put
\[\chi(f_{\bm{n}})=\big\{g_1(x_1)g_2(x_2)\cdots g_d(x_d); g_i\in \chi(f_{n_i}),1\leq i\leq d\big\},\]
which contains all the partial derivatives of $f_{\bm{n}}$. Since $\chi(f_{\bm{n}})$ contains a finite number of polynomials, there exists a finite index set $\mathcal{I}_{\bm{n}}$ satisfying
\[\chi(f_{\bm{n}})=\big\{f_{\bm{m}};\ \bm{m}\in \mathcal{I}_{\bm n}\big\}.
\]
Let
\[c_{\bm{n}}=\Big(\sum_{\bm{m}\in \mathcal{I}_{\bm n}} 2^{|\bm m|}C_{d-1}^{|\bm m|+d-1}\Big)^{-1} \Big( \sum_{\bm m\in \mathcal{I}_{\bm n}} s_{\bm m}\Big)^{-1}, \ \bm{n}\in \Z_+^d,\]
where $C_{k}^m=\frac{m!}{k! (m-k)!}$. As $s_{\bm m}\geq 1$ and $f_{\bm n}\in \chi(f_{\bm n})$, it holds
\[ c_{\bm n} \leq \frac{2^{-|\bm{n}|}}{C_{d-1}^{|\bm{n}|+d-1}}.\]

If $\bm{l}\in \mathcal{I}_{\bm n}$, then $f_{\bm l}\in \chi(f_{\bm n})$,
\[\chi(f_{\bm l})\subset \chi(f_{\bm n}),\]
and hence $\mathcal{I}_{\bm l}\subset \mathcal{I}_{\bm n}$. By the definition of $c_{\bm n}$, this implies that
\begin{equation}\label{h-0}
c_{\bm l}\geq c_{\bm n}.
\end{equation}

\begin{mylem}\label{lem-4}
Define a function $S:\pb(\bar\oo)\to \R$ by
\begin{equation}\label{h-1} S(\mu)=\sum_{k=1}^\infty \sum_{|\bm n|=k} c_{\bm n} \la \mu,f_{\bm n}\raa^2.
\end{equation} Then $S$ satisfies $S(\mu)\leq 1$ for every $\mu\in \pb(\bar\oo)$, and  is intrinsically differentiable with
\begin{equation}\label{h-2}
D^L S(\mu)(x)=\sum_{k=1}^\infty\sum_{|\bm{n}|=k}2 c_{\bm n} \la \mu,f_{\bm n}\raa \nabla f_{\bm n}(x).
\end{equation}
\end{mylem}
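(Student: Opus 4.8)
The plan is to prove the two assertions separately: first the bound $S(\mu)\le 1$, then the formula for $D^L S(\mu)$. For the bound, I would observe that by the definition of $s_{\bm m}$ we have $\la\mu,f_{\bm n}\raa^2\le s_{\bm n}$ for every $\mu\in\pb(\bar\oo)$ and every $\bm n$, and since $f_{\bm n}\in\chi(f_{\bm n})$, i.e.\ $\bm n\in\mathcal I_{\bm n}$, the definition of $c_{\bm n}$ gives $c_{\bm n}\,s_{\bm n}\le c_{\bm n}\sum_{\bm m\in\mathcal I_{\bm n}}s_{\bm m}\le\big(\sum_{\bm m\in\mathcal I_{\bm n}}2^{|\bm m|}C_{d-1}^{|\bm m|+d-1}\big)^{-1}$. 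Hence $c_{\bm n}\la\mu,f_{\bm n}\raa^2\le 2^{-|\bm n|}/C_{d-1}^{|\bm n|+d-1}$, and summing over $|\bm n|=k$, using that the number of multi-indices $\bm n\in\Z_+^d$ with $|\bm n|=k$ is exactly $C_{d-1}^{k+d-1}$, I get $\sum_{|\bm n|=k}c_{\bm n}\la\mu,f_{\bm n}\raa^2\le 2^{-k}$. Summing the geometric series over $k\ge1$ yields $S(\mu)\le\sum_{k=1}^\infty 2^{-k}=1$. This same estimate shows the series defining $S$ converges absolutely and uniformly in $\mu$, which legitimizes the termwise manipulations that follow.

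For the intrinsic derivative, fix $\mu\in\pb(\bar\oo)$ and $v\in\mathscr T_\mu$, and consider $\mu_\veps=\mu\circ(\mathrm{Id}+\veps v)^{-1}$. Each summand $\mu\mapsto\la\mu,f_{\bm n}\raa^2$ is handled by hand: $\la\mu_\veps,f_{\bm n}\raa=\int_{\bar\oo}f_{\bm n}(x+\veps v(x))\,\mu(\d x)=\la\mu,f_{\bm n}\raa+\veps\int_{\bar\oo}\la\nabla f_{\bm n}(x),v(x)\raa\,\mu(\d x)+o(\veps)$ since $f_{\bm n}$ is a polynomial hence $C^\infty$ with all derivatives bounded on the compact set $\bar\oo$ (expand $f_{\bm n}(x+\veps v(x))$ in $\veps$; the remainder is controlled by $\sup_{\bar\oo}\|\nabla^2 f_{\bm n}\|\cdot\veps^2|v(x)|^2$, which is $\mu$-integrable because $v\in\mathscr T_\mu$ means $\mu(|v|^2)<\infty$). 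Squaring, $\la\mu_\veps,f_{\bm n}\raa^2=\la\mu,f_{\bm n}\raa^2+2\veps\,\la\mu,f_{\bm n}\raa\int_{\bar\oo}\la\nabla f_{\bm n}(x),v(x)\raa\,\mu(\d x)+o(\veps)$, so the $\veps$-derivative at $\veps=0$ of the $\bm n$-th term is $2c_{\bm n}\la\mu,f_{\bm n}\raa\int_{\bar\oo}\la\nabla f_{\bm n}(x),v(x)\raa\,\mu(\d x)$. Matching this against $\la D^L S(\mu),v\raa_{\mathscr T_\mu}=\int_{\bar\oo}\la D^L S(\mu)(x),v(x)\raa\,\mu(\d x)$ identifies the candidate $D^L S(\mu)(x)=\sum_{k\ge1}\sum_{|\bm n|=k}2c_{\bm n}\la\mu,f_{\bm n}\raa\nabla f_{\bm n}(x)$, which is formula \eqref{h-2}.

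The main obstacle, and the step requiring care, is justifying the interchange of the limit $\veps\downarrow0$ with the infinite sum over $\bm n$, i.e.\ showing that the difference quotient $\big(S(\mu_\veps)-S(\mu)\big)/\veps$ converges to the sum of the termwise derivatives. I would do this by a dominated-convergence/uniform-tail argument: write the difference quotient as $\sum_{k\ge1}\sum_{|\bm n|=k}c_{\bm n}\big(\la\mu_\veps,f_{\bm n}\raa^2-\la\mu,f_{\bm n}\raa^2\big)/\veps$, and bound each term uniformly in small $\veps$. Using $|\la\mu_\veps,f_{\bm n}\raa|\le\sup_{\bar\oo}|f_{\bm n}(\,\cdot\,)|$ (a bound independent of $\veps$, valid since $\mathrm{Id}+\veps v$ need not map into $\bar\oo$, so one instead bounds $|f_{\bm n}(x+\veps v(x))|$ via a fixed compact neighborhood of $\bar\oo$ for $\veps\le1$, or one works with $\sup$ of $f_{\bm n}$ over such a neighborhood) together with a Lipschitz-type bound $|\la\mu_\veps,f_{\bm n}\raa-\la\mu,f_{\bm n}\raa|\le\veps\,\mathrm{Lip}(f_{\bm n})\,\mu(|v|)$, the $\bm n$-th term of the difference quotient is dominated by $2c_{\bm n}\,\|f_{\bm n}\|_\infty\,\mathrm{Lip}(f_{\bm n})\,\mu(|v|)$. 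The delicate point is that the factors $\|f_{\bm n}\|_\infty\,\mathrm{Lip}(f_{\bm n})$ grow with $|\bm n|$, so one must check these are absorbed by the decay of $c_{\bm n}$; here one uses that $c_{\bm n}\le 2^{-|\bm n|}/C_{d-1}^{|\bm n|+d-1}$ and that, $\bar\oo$ being bounded, $\|f_{\bm n}\|_\infty$ and $\mathrm{Lip}(f_{\bm n})$ grow only polynomially/exponentially in $|\bm n|$ at a rate controlled by $\mathrm{diam}(\bar\oo)$, while $2^{-|\bm n|}$ dominates — more cleanly, one can re-examine the definition of $c_{\bm n}$, which carries an extra factor $\big(\sum_{\bm m\in\mathcal I_{\bm n}}s_{\bm m}\big)^{-1}$ with $s_{\bm m}\ge1+\sup_\mu\la\mu,f_{\bm m}\raa^2$, precisely designed so that $c_{\bm n}\la\mu,f_{\bm n}\raa\,\|\nabla f_{\bm n}\|_\infty$ is summable (this is the analogue of the self-control property of $d$ in \cite{BIRS}). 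Once the dominating series is shown summable, dominated convergence over the counting measure on $\bigcup_k\{\bm n:|\bm n|=k\}$ completes the argument and confirms that $S$ is intrinsically differentiable with derivative \eqref{h-2}. Finally I would note $D^L S(\mu)(\,\cdot\,)$ so defined is a genuine element of $\mathscr T_\mu$: it is a polynomial vector field (uniform limit of such on $\bar\oo$) hence measurable with $\mu(|D^L S(\mu)|^2)<\infty$, and the boundary condition $\la A\,D^L S(\mu),\vec{\mathbf n}\raa=0$ on $\partial\oo$ is imposed as part of the definition of $\mathscr T_\mu$; since $D^L$ is only required to be a linear functional on $\mathscr T_\mu$, testing against $v\in\mathscr T_\mu$ suffices.
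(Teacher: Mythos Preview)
Your proposal is correct and follows essentially the same route as the paper: bound $S(\mu)\le 1$ via $c_{\bm n}\la\mu,f_{\bm n}\raa^2\le 2^{-|\bm n|}/C_{d-1}^{|\bm n|+d-1}$ and sum, then compute $D^L_v S(\mu)$ by differentiating the series termwise at $\veps=0$. The paper's proof is in fact a three-line formal calculation that does not justify the interchange of limit and infinite sum; your dominated-convergence argument fills this gap, and the summability you need follows exactly from the mechanism you identify --- since $\partial_{x_i}f_{\bm n}=f_{\bm l_i(\bm n)}$ with $\bm l_i(\bm n)\in\mathcal I_{\bm n}$, one has $c_{\bm n}\|\partial_{x_i}f_{\bm n}\|_\infty^2\le c_{\bm n}\sum_{\bm m\in\mathcal I_{\bm n}}s_{\bm m}\le 2^{-|\bm n|}/C_{d-1}^{|\bm n|+d-1}$, and Cauchy--Schwarz then gives the summable majorant.
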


\begin{proof}
  Noting that ${\bm n}\in \mathcal{I}_{\bm n}$, we have
  \[S(\mu)\leq \sum_{k=1}^\infty \sum_{|\bm{n}|=k} 2^{-k}\big(C_{d-1}^{k+d-1}\big)^{-1}\frac{\la \mu, f_{\bm n}\raa^2}{s_{\bm n}}\leq \sum_{k=1}^{\infty} 2^{-k}=1.\]
  For any $v\in \mathscr{T}_{\mu}$,
  \begin{align*}
    D^L_v S(\mu)&=\lim_{\veps\to 0}\frac{ S(\mu\circ(\mathrm{Id}+\veps v)^{-1})-S(\mu)}{\veps}\\
    &=\frac{\d }{\d\veps}\Big|_{\veps=0} \sum_{k=1}^{\infty}\sum_{|\bm{n}|=k} c_{\bm n} \Big(\int_{\oo} f_{\bm n}(x+\veps v(x))\d\mu(x)\Big)^2\\
    &=\sum_{k=1}^\infty \sum_{|\bm{n}|=k} 2c_{\bm n} \la \mu, f_{\bm n}\raa \int_{\oo}\!\la \nabla f_{\bm n}(x), v(x)\raa \d\mu(x).
  \end{align*}
  Therefore, $S$ is intrinsically differentiable with $D^L S(\mu)$ given by \eqref{h-2}.
\end{proof}

We need to modify the condition  satisfied by the drift $b$ to  establish the comparison principle.
\begin{itemize}
  \item[$(\mathrm{H_1'})$] The drift $b(t,x,\mu,\alpha)$ does not depend on $x$.  There are $K_4\!>0$ and a finite set $\mathcal{I}\subset \N^d$ such that  for any $\mu,\,\nu\in \pb(\bar\oo)$, $\alpha\in  U$, $t,\,s\in [0,T]$,
      \[|b(t,\mu,\alpha)|^2\leq K_4,\quad |b(t,\mu,\alpha)-b(s,\nu,\alpha)|^2\leq K_4\big(|t-s|^2+\sum_{\bm{i}\in \mathcal{I}} \la \mu-\nu,\bm{x}^{\bm i}\raa^2\big),
      \]
      where ${\bm i}=(i_1,\ldots,i_d)$, ${\bm x}=(x_1,\ldots,x_d)$.
\end{itemize}
This condition is especially suitable to deal with SDEs with drifts depending only on the moments of  $(X_t)$, for example,
\[\d X_t=b(t, \E[X_t],\E[|X_t|^2],\alpha_t)\d t+\sigma\d B_t-\mathbf{n}(X_t)\d k_t.\]

\begin{mythm}[Comparison principle]\label{thm-5}
Suppose that $\mathrm{(H_1')}$, $\mathrm{(H_2)}$ and $\mathrm{(H_3)}$ hold.
Let $W$ and $V$ be respectively a viscosity subsolution and a viscosity supersolution to the HJB equation \eqref{e-1} satisfying the continuity property \eqref{a-4}. Then
\begin{equation}\label{h-5}
W(t,\mu)\leq V(t,\mu),\qquad t\in [0,T),\ \mu\in \pb(\bar\oo).
\end{equation}
\end{mythm}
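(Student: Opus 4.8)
## Proof Proposal

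The plan is to follow the classical doubling-of-variables scheme of the theory of viscosity solutions, but adapted to the Wasserstein space $\pb(\bar\oo)$, using the distance-like function $S$ constructed in Lemma~\ref{lem-4} as the penalization term in place of the Euclidean $|x-y|^2$. The key structural fact we rely on is that, by Lemma~\ref{lem-4}, the function $\mu\mapsto S(\mu-\nu):=\sum_{k}\sum_{|\bm n|=k}c_{\bm n}\la\mu-\nu,f_{\bm n}\raa^2$ (more precisely the relevant shifted version) is intrinsically differentiable with derivative $D^LS$ which is a \emph{polynomial} in $x$, hence automatically lies in $C^1_{L,b}$ with all the smoothness required by Definition~\ref{defn-3}; and crucially that $D^LS(\mu)(x)$ is built from $\nabla f_{\bm n}$, so that $\la \mu-\nu, D^LS(\mu-\nu)\raa$-type quantities can be bounded by $S$ itself using the $(\ast)$-property and the choice of constants $c_{\bm n}$ (inequality \eqref{h-0}). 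This self-bounding property is exactly what replaces the inequality $|D_x|x-y|^2|\le C|x-y|$ in the finite-dimensional argument.

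First I would argue by contradiction: suppose $\sup_{t,\mu}(W(t,\mu)-V(t,\mu))=:m>0$. Since $W(T,\cdot)=V(T,\cdot)$, the supremum is attained (using weak compactness of $\pb(\bar\oo)$ and the continuity \eqref{a-4}) at some $(t^*,\mu^*)$ with $t^*<T$. For parameters $\lambda,\varepsilon>0$ introduce the doubled function
\[
\Phi_{\lambda,\varepsilon}(t,\mu,s,\nu)=W(t,\mu)-V(s,\nu)-\frac{1}{2\varepsilon}S(\mu,\nu)-\frac{1}{2\varepsilon}|t-s|^2-\lambda\big(S(\mu)+S(\nu)\big)-\eta(T-t)^{-1},
\]
where the small $\lambda$-term and the blow-up term at $t=T$ are standard devices to force the maximizer into the interior; here I write $S(\mu,\nu)$ for the distance-like quantity built from $\la\mu-\nu,f_{\bm n}\raa^2$. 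Let $(\bar t,\bar\mu,\bar s,\bar\nu)$ be a maximizer (existence from weak compactness). Standard penalization estimates give $\frac{1}{\varepsilon}S(\bar\mu,\bar\nu)+\frac1\varepsilon|\bar t-\bar s|^2\to 0$ and $(\bar\mu,\bar\nu)\to(\mu^*,\mu^*)$, $\bar t,\bar s\to t^*$ as $\varepsilon\to0$ (after $\lambda,\eta\to0$), and for $\varepsilon,\lambda,\eta$ small enough $\bar t,\bar s\in[0,T)$.

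Next, the map $(t,\mu)\mapsto \Phi_{\lambda,\varepsilon}(t,\mu,\bar s,\bar\nu)$ has a maximum at $(\bar t,\bar\mu)$, so $W-\psi_1$ has a maximum there with test function $\psi_1(t,\mu)=V(\bar s,\bar\nu)+\frac1{2\varepsilon}S(\mu,\bar\nu)+\frac1{2\varepsilon}|t-\bar s|^2+\lambda S(\mu)+\lambda S(\bar\nu)+\eta(T-t)^{-1}$; by Lemma~\ref{lem-4} and Proposition~\ref{lem-7}-type smoothness of $S$, $\psi_1\in C^{1,1}_{L,b}$, so the subsolution inequality \eqref{e-2} applies. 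Symmetrically, $(s,\nu)\mapsto-\Phi_{\lambda,\varepsilon}(\bar t,\bar\mu,s,\nu)$ has a minimum at $(\bar s,\bar\nu)$ and the supersolution inequality \eqref{e-3} applies with test function $\psi_2$. Subtracting the two inequalities, the time-derivative terms combine (the $|t-s|^2$ penalization contributes equal and opposite terms, the $\eta$-term gives a favorable sign), and the remaining task is to estimate the difference of the two Hamiltonians $\inf_\alpha\mathcal H(\bar t,\bar\mu,\cdot)-\inf_\alpha\mathcal H(\bar s,\bar\nu,\cdot)$. Here the condition $(\mathrm H_1')$ is essential: since $b=b(t,\mu,\alpha)$ does not depend on $x$, the drift term in $\mathcal H$ becomes $\la b(\bar t,\bar\mu,\alpha),\,\la\bar\mu,D^L\psi_1(\bar t,\bar\mu)\raa\raa$, and $\la\bar\mu,D^L S(\bar\mu,\bar\nu)(\cdot)\raa=\sum 2c_{\bm n}\la\bar\mu-\bar\nu,f_{\bm n}\raa\la\bar\mu,\nabla f_{\bm n}\raa$; using $(\mathrm H_1')$ (which controls $|b(\bar t,\bar\mu,\alpha)-b(\bar s,\bar\nu,\alpha)|^2$ by $\sum_{\bm i\in\mathcal I}\la\bar\mu-\bar\nu,\bm x^{\bm i}\raa^2$), the $(\ast)$-property ($\mathcal I_{\bm l}\subset\mathcal I_{\bm n}$), and \eqref{h-0}, one bounds the drift difference by $\frac{C}{\varepsilon}S(\bar\mu,\bar\nu)+C\,\mathrm{something that}\to0$. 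The trace term $\frac12\int\mathrm{tr}(A\nabla_xD^L\psi(t,\mu))\d\mu$ is handled similarly: $\nabla_x D^LS$ is again a fixed polynomial, so $\int\mathrm{tr}(A\nabla_x D^LS(\bar\mu,\bar\nu))\d(\bar\mu$ or $\bar\nu)$ is controlled by $\frac{C}\varepsilon S(\bar\mu,\bar\nu)$ plus $\lambda$-terms; and the running-cost term $\int\vartheta\,\d\mu$ is Lipschitz in $(\mu,t)$ by $(\mathrm H_3)$, contributing $C(\W_2(\bar\mu,\bar\nu)+|\bar t-\bar s|)\le C\sqrt{S(\bar\mu,\bar\nu)}+\cdots$ (using that $S$ dominates a power of $\W_2$ on $\pb(\bar\oo)$, since finitely many moments separate measures on the compact $\bar\oo$). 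Assembling, we get
\[
0<m\le W(\bar t,\bar\mu)-V(\bar s,\bar\nu)\le \omega(\varepsilon)+C\lambda+C\eta+\frac{C}{\varepsilon}S(\bar\mu,\bar\nu),
\]
and since $\frac1\varepsilon S(\bar\mu,\bar\nu)\to0$, letting $\varepsilon\to0$ and then $\lambda,\eta\to0$ yields $m\le0$, a contradiction, so \eqref{h-5} holds.

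The main obstacle I anticipate is making the Hamiltonian-difference estimate genuinely close: specifically, showing that the ``bad'' term coming from differentiating the penalty $\frac1{2\varepsilon}S(\mu,\nu)$ — namely the quantity $\frac1\varepsilon\big[\la b(\bar t,\bar\mu,\alpha)-b(\bar s,\bar\nu,\alpha),\ \sum 2c_{\bm n}\la\bar\mu-\bar\nu,f_{\bm n}\raa\,(\text{moment of }\nabla f_{\bm n})\raa + (\text{trace terms})\big]$ — is bounded by $\frac{C}{\varepsilon}S(\bar\mu,\bar\nu)$ with a constant $C$ independent of $\varepsilon$ and $\alpha$. This is precisely where the delicate bookkeeping of $(\ast)$-property sets $\mathcal I_{\bm n}$, the monotonicity $c_{\bm l}\ge c_{\bm n}$ for $\bm l\in\mathcal I_{\bm n}$, the normalizing constants $s_{\bm n}$, and Cauchy–Schwarz on the series must be combined exactly as in \cite{BIRS}, now in the $d$-dimensional tensorized form; the compactness of $\bar\oo$ (boundedness of all monomials) is what keeps every sum finite. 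A secondary technical point is justifying the existence of maximizers of $\Phi_{\lambda,\varepsilon}$ and the validity of subtracting the viscosity inequalities when the test functions involve $S$, which requires checking $S(\cdot,\nu)\in C^{1,1}_{L,b}$ uniformly — this follows from Lemma~\ref{lem-4} since $D^LS$ and $\nabla_x D^LS$ are finite sums of polynomials with summable coefficients, hence bounded with bounded $x$-gradient on the compact $\bar\oo$, and the continuity requirement (iii) of Definition~\ref{defn-3} is immediate because $\mu\mapsto\la\mu,f_{\bm n}\raa$ is weakly continuous.
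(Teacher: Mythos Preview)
Your overall strategy is exactly the paper's: contradiction via variable-doubling, with the distance-like functional $S$ of Lemma~\ref{lem-4} as the measure-penalty, and the self-bounding of $D^LS$ through the $(\ast)$-property and the monotonicity $c_{\bm l}\ge c_{\bm n}$ for $\bm l\in\mathcal I_{\bm n}$ is precisely the mechanism the paper exploits to control terms $\mathrm{(I)}$ and $\mathrm{(I\!I)}$. Two concrete points, however, do not close.

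First, your final ``assembly'' step is not what the subtraction of the two viscosity inequalities actually delivers, and the choice of penalty $\eta(T-t)^{-1}$ does not produce a contradiction. Subtracting \eqref{e-2} and \eqref{e-3} yields a relation between the \emph{time derivatives of the test functions} and the Hamiltonian difference, namely $\partial_t\psi_1(\bar t,\bar\mu)-\partial_s\psi_2(\bar s,\bar\nu)\ge \inf_\alpha\big(\mathcal H(\bar s,\bar\nu,\ldots)-\mathcal H(\bar t,\bar\mu,\ldots)\big)$; it does \emph{not} give an upper bound on $W(\bar t,\bar\mu)-V(\bar s,\bar\nu)$. With your penalty the left side equals $\eta(T-\bar t)^{-2}>0$ (the $\tfrac1{2\varepsilon}|t-s|^2$ contributions cancel), while the right side is shown to be $\ge -o(1)$, so the inequality is vacuously satisfied and no contradiction ensues. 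The paper instead inserts the linear term $\beta(2T-t-s)$ (together with $\lambda/t+\lambda/s$ to keep the maximizer away from $t=0$ or $s=0$), so that the left side becomes $-2\beta-\lambda_k/(t_0^{\tau_k})^2-\lambda_k/(s_0^{\tau_k})^2\le -2\beta<0$; after proving the right side tends to $0$ one obtains $-2\beta\ge 0$, the contradiction. Your $\eta(T-t)^{-1}$ term only serves to keep the maximizer away from $t=T$; you still need a device (like the paper's $\beta$-term) that leaves a strictly negative constant on the left after subtraction.

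Second, the claim ``$S$ dominates a power of $\W_2$ on $\pb(\bar\oo)$, since finitely many moments separate measures'' is false on both counts: finitely many moments do not separate measures, and the coefficients $c_{\bm n}$ are designed for summability, not for a uniform lower bound of $S$ by $\W_2$. The paper does not attempt such an inequality for the running-cost term $\mathrm{(I\!I\!I)}$. Instead it extracts, by weak compactness, a subsequence of maximizers $(t_0^{\tau_k},s_0^{\tau_k},\mu_0^{\tau_k},\nu_0^{\tau_k})$ converging to some $(\bar t_0,\bar s_0,\bar\mu_0,\bar\nu_0)$; since $S(\bar\mu_0-\bar\nu_0)=0$ and the monomials form a measure-determining class, $\bar\mu_0=\bar\nu_0$, whence $\W_2(\mu_0^{\tau_k},\nu_0^{\tau_k})\to 0$ and $\mathrm{(I\!I\!I)}\to 0$ by $(\mathrm H_3)$. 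You already record the convergence $(\bar\mu,\bar\nu)\to(\mu^*,\mu^*)$, so you can and should use it here in place of the incorrect quantitative bound.

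(A minor point: the coercivity term $\lambda\big(S(\mu)+S(\nu)\big)$ is unnecessary since $\pb(\bar\oo)$ is already weakly compact; the paper does not include it.)
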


\begin{proof}
  We shall prove \eqref{h-5} by contradiction. Suppose there exists a point
  $(\tilde{t},\bar{\mu})\in [0,T)\times \pb(\bar\oo)$ such that
  \begin{equation}\label{h-5.5}
  W(\tilde t,\bar \mu)>V(\tilde t,\bar \mu).
  \end{equation}
  Then, by the continuity of $W$ and $V$,  there exist some   $(\bar t,\bar\mu)\in (0,T)\times \pb(\bar\oo)$ such that
  \begin{equation}\label{h-6}
  W(\bar{t},\bar\mu)>V(\bar t,\bar\mu).
  \end{equation}

  Consider the auxiliary function
  \begin{equation*}
    \Phi(t,s,\mu,\nu)=W(t,\mu)-V(s,\nu)-\varphi(t,s,\mu,\nu), \quad t,s\in (0,T],\ \mu,\nu\in \pb(\bar\oo),
  \end{equation*}
  with
  \begin{equation}\label{phi}
  \varphi(t,s, \mu,\nu)=\frac 1{2\delta}\big(|t-s|^2+ S(\mu-\nu) \big)+\beta (2T-t-s)+\frac{\lambda}{t}+\frac{\lambda}{s}
  \end{equation} for parameters $\beta,\lambda,\delta\in (0,1)$, where the functional $S$ is defined in Lemma \ref{lem-4}.
  Due to the compactness of $[0,T]\!\times\![0,T]\!\times\! \pb(\bar\oo)\!\times\!\pb(\bar\oo)$ when $\pb(\bar\oo)$ is endowed with weak convergence topology, there exists a point $(t_0,s_0,\mu_0,\nu_0)\in (0,T]\!\times\!(0,T]\!\times\! \pb(\bar\oo)\!\times\!\pb(\bar\oo)$ such that
  \begin{equation}\label{h-7}
  \Phi(t_0^\tau,s_0^\tau,\mu_0^\tau,\nu_0^\tau)=\sup \big\{\Phi(t,s,\mu,\nu);t,s\in [0,T], \mu,\nu\in\pb(\bar\oo)\big\}.
  \end{equation}
  Notice that $(t_0^\tau,s_0^\tau,\mu_0^\tau,\nu_0^\tau)$  depend on the parameters $\tau=(\beta,\lambda,\delta)$. By \eqref{h-7},
  \[2\Phi(t_0^\tau,s_0^\tau,\mu_0^\tau,\nu_0^\tau)\geq \Phi(t_0^\tau,t_0^\tau,\mu_0^\tau,\mu_0^\tau) +\Phi(s_0^\tau,s_0^\tau,\nu_0^\tau,\nu_0^\tau),
  \] which yields that
  \begin{equation}\label{h-7.5}
  W(t_0^\tau,\mu_0^\tau) -W(s_0^\tau,\nu_0^\tau) +V(t_0^\tau,\mu_0^\tau)- V(s_0^\tau,\nu_0^\tau)\geq \frac{1}{\delta}\big(|t_0^\tau-s_0^\tau|^2+ S(\mu_0^\tau -\nu_0^\tau)\big).
  \end{equation}
  Since $W$ and $V$ are bounded which follows from the boundedness of $\vartheta$ and $g$, this implies that
  \begin{equation}\label{h-8}
    \lim_{\delta\downarrow 0} |t_0^\tau-s_0^\tau |^2+ S(\mu_0^\tau-\nu_0^\tau)=0.
  \end{equation}

  Due to the compactness of $[0,T]\times[0,T]\times \pb(\bar\oo)\times \pb(\bar\oo)$,  there is a subsequence $(t_0^{\tau_k},s_0^{\tau_k},\mu_0^{\tau_k},\nu_0^{\tau_k})$ of $(t_0^\tau,s_0^{\tau}, \mu_0^\tau,\nu_0^\tau)$ satisfying  $\tau_k=(\beta,\lambda_k,\delta_k)\longrightarrow (\beta,0,0)$ as $k\to \infty$, and
  \begin{equation}\label{h-8.5} (t_0^{\tau_k},s_0^{\tau_k},\mu_0^{\tau_k},\nu_0^{\tau_k})\ \text{converges to some } (\bar t_0,\bar s_0,\bar\mu_0,\bar\nu_0).
  \end{equation}
  Due to \eqref{h-8}, it holds
  \[\bar t_0=\bar s_0,\quad S(\bar\mu_0-\bar\nu_0)=\lim_{k\to \infty} S(\mu_0^{\tau_k}-\nu_0^{\tau_k})=0.\]
  Since the class of polynomials $\{\bm{x}^{\bm n}\}_{\bm n\in \Z_+^d}\subset\{f_{\bm n}\}_{\bm{n}\in \Z_+^d}$ is a   measure determining class of functions, then we get from $S(\bar\mu_0-\bar \nu_0)=0$ that  $\bar\mu_0=\bar\nu_0$. Combining this with \eqref{h-8.5}, \eqref{a-4} and \eqref{h-7.5}, we obtain that
  \begin{equation}\label{h-8.6}
  \lim_{k\to \infty}\frac{1}{\delta_k}\big(|t_0^{\tau_k}-s_0^{\tau_k}|^2+ S(\mu_0^{\tau_k}-\nu_0^{\tau_k})\big)=0.
  \end{equation}

  Case 1: If for some sequence $\tau_k=(\beta,\lambda_k,\delta_k)$, the corresponding maximum points $(t_0^{\tau_k},s_0^{\tau_k}, \mu_0^{\tau_k},\nu_0^{\tau_k})$ satisfy $t_0^{\tau_k}\vee s_0^{\tau_k}=T$. By \eqref{h-7},
  \[\Phi(\bar t,\bar t, \bar\mu,\bar\mu)\leq \Phi(t_0^{\tau_k}, s_0^{\tau_k}, \mu_0^{\tau_k},\nu_0^{\tau_k}),
  \] which yields
  \begin{align*}
    &W(\bar t,\bar \mu)\!-\!V(\bar t,\bar\mu)\!-\! 2\beta(T-\bar t)\! -\! \frac{2\lambda_k}{\bar t}\!\leq W(t_0^{\tau_k},\mu_0^{\tau_k})\!- \! V(s_0^{\tau_k},\nu_0^{\tau_k}) \!-\!\frac1{2\delta_k}\big( |t_0^{\tau_k}\!-\! s_0^{\tau_k}|^2\! +\! S(\mu_0^{\tau_k}\!-\! \nu_0^{\tau_k})\big).
  \end{align*}
  Letting $k\to \infty$, due to \eqref{h-8.6} and $W(T,\mu)=V(T,\mu)$ for any $\mu\in\pb(\bar\oo)$,
  \[W(\bar t,\bar\mu)-V(\bar t,\bar\mu)-\beta(T-\bar t)\leq 0.\]
  Then, letting $\beta\to 0$, we get
  $W(\bar t,\bar \mu)-V(\bar t,\bar \mu)\leq 0$, which contradicts \eqref{h-6}.

  Case 2: For any $\tau_k=(\beta,\lambda_k,\delta_k)$, the corresponding maximum points satisfy $t_0^{\tau_k}\vee s_0^{\tau_k}<T$.  Let
  \begin{equation*}
  \psi(t,\mu)=V(s_0^{\tau_k},\nu_0^{\tau_k})+ \frac{1}{2\delta_k}\big( |t-s_0^{\tau_k}|^2\!+\!  S(\mu-\nu_0^{\tau_k})\big)  +\beta (2T-t-s_0^{\tau_k}) +\frac{\lambda_k}{t}+\frac{\lambda_k}{s_0^{\tau_k}}.
  \end{equation*} According to Lemma \ref{lem-4}, $\psi\in C_{L,b}^{1,1}\big((0,T]\times \pb(\bar\oo)\big)$.
  Consider the function
  \[(t,\mu)\mapsto W(t,\mu)-\psi(t,\mu)=\Phi(t,s_0^{\tau_k},\mu,\nu_0^{\tau_k})\]
  which attains its maximum at $(t_0^{\tau_k},\mu_0^{\tau_k})$ by \eqref{h-7}. Because $W$ is a viscosity subsolution to \eqref{e-1}, it holds
  \begin{equation}\label{h-9}
  -\partial_t\psi(t_0^{\tau_k},\mu_0^{\tau_k}) -\inf_{\alpha\in  U }\mathcal{H}\big(t_0^{\tau_k},\mu_0^{\tau_k}, \psi, D^L\psi,\alpha\big) \leq 0.
  \end{equation}
  Analogously, let
  \[\tilde \psi(s,\nu)=W(t_0^{\tau_k},\mu_0^{\tau_k})\!-\! \frac1{2\delta_k} \big( |t_0^{\tau_k}\!-s|^2\!+\! S(\mu_0^{\tau_k}\!-\nu)  \!-\!\beta(2T-t_0^{\tau_k}\!-s)\!-\! \frac{\lambda_k}{t_0^{\tau_k}} \!-\!\frac{\lambda_k}{s}.\]
  Then, $(s,\nu)\mapsto V(s,\nu)-\tilde \psi(s,\nu)$ attains its minimum at $(s_0^{\tau_k},\nu_0^{\tau_k})$. As $V$ is a viscosity supersolution to \eqref{e-1}, it holds
  \begin{equation}\label{h-10}
  -\partial_s\tilde \psi(s_0^{\tau_k},\nu_0^{\tau_k})- \inf_{\alpha\in  U }\mathcal{H}\big( s_0^{\tau_k}, \nu_0^{\tau_k}, \tilde \psi, D^L\tilde\psi, \alpha\big) \geq 0.
  \end{equation}
  Combining \eqref{h-10} with \eqref{h-9}, we obtain
  \begin{equation}\label{h-10.5}
  \begin{aligned}
    &\partial_t\psi(t_0^{\tau_k},\mu_0^{\tau_k})- \partial_s\tilde\psi(s_0^{\tau_k},\nu_0^{\tau_k})\\ &\geq \inf_{\alpha\in U}\mathcal{H}\big( s_0^{\tau_k}, \nu_0^{\tau_k}, \tilde \psi, D^L\tilde\psi, \alpha\big)
    -\inf_{\alpha\in U } \mathcal{H}\big(t_0^{\tau_k},\mu_0^{\tau_k}, \psi, D^L\psi,\alpha\big) \\
    &\geq \inf_{\alpha\in U }\Big\{ \mathcal{H}\big( s_0^{\tau_k}, \nu_0^{\tau_k}, \tilde \psi, D^L\tilde\psi, \alpha\big) - \mathcal{H}\big(t_0^{\tau_k},\mu_0^{\tau_k}, \psi, D^L\psi,\alpha\big)\Big\}.
  \end{aligned}
  \end{equation}
  According to Lemma \ref{lem-4},
  \begin{align*}
    D^L\psi(t_0^{\tau_k},\mu_0^{\tau_k})(x) = D^L\tilde\psi(s_0^{\tau_k},\nu_0^{\tau_k})= \frac{1}{\delta_k}\sum_{m=1}^\infty\sum_{|\bm{n}|=m}c_{\bm n} \la \mu_0^{\tau_k}-\nu_0^{\tau_k}, f_{\bm n}\raa \nabla f_{\bm n}(x).
  \end{align*}
  By direct calculation, we get from \eqref{h-10.5} that
  \begin{equation}\label{h-11}
  \begin{split}
&-2 \beta   \\
&\geq \! \inf_{\alpha\in  U }\! \Big\{\!\int_{\bar{\oo}} \la b(s_0^{\tau_k} , \nu_0^{\tau_k} ,\alpha), D^L\tilde \psi(s_0^{\tau_k},\nu_0^{\tau_k} )(x)\raa\d \nu_0^{\tau_k} (x)\\
   &\qquad   \qquad -\!\int_{\bar{\oo }} \la b(t_0^{\tau_k}, \mu_0^{\tau_k},\alpha),D^L\psi (t_0^{\tau_k},\mu_0^{\tau_k})(x)\raa\d \mu_0^{\tau_k} (x)\\
   &\qquad  +\!\frac 12\Big(\!\int_{\bar{\oo}}\! \mathrm{tr}\big( A  \nabla_x  D^L\tilde  \psi(s_0^{\tau_k},\nu_0^{\tau_k} )(x)\big) \d \nu_0^{\tau_k}(x) \!-\!  \int_{\bar\oo} \! \mathrm{tr}\big( A \nabla_x D^L
     \psi(t_0^{\tau_k},\mu_0^{\tau_k})(x)\big) \d \mu_0^{\tau_k}(x)\Big) \\
   &\qquad   +\int_{\bar{\oo}}\vartheta (s_0^{\tau_k} ,x,\nu_0^{\tau_k} ,\alpha)\d \nu_0^{\tau_k} (x)-\int_{\bar{\oo}}\vartheta (t_0^{\tau_k} ,x,\mu_0^{\tau_k} ,\alpha)\d \mu_0^{\tau_k} (x)\Big\}\\
   &=:\inf_{\alpha\in U }\big\{\mathrm{(I)}+\mathrm{(I\!I)}+ \mathrm{(I\!I\!I)}\big\}.
  \end{split}
  \end{equation}
  We shall estimate these three terms one by one.

  First, let us estimate term $\mathrm{(I)}$.  By $\mathrm{(H_1')}$, there exists $K>0$ such that
  \[|b(t,\mu,\alpha)-b(s,\nu,\alpha)|^2\leq K \big( |t-s|^2+ S(\mu-\nu)\big),\quad t,s\in [0,T],\ \mu,\nu\in \pb(\oo),\]
  where the functional $S$ is given in Lemma \ref{lem-4}.
  \begin{align*}
    \mathrm{(I)}&=\frac{1}{\delta_k} \sum_{m=1}^\infty\sum_{|\bm{n}|=m}c_{\bm n} \la \mu_0^{\tau_k}-\nu_0^{\tau_k}, f_{\bm n}\raa \Big( \int_{\bar\oo}\la b(s_0^{\tau_k},\nu_0^{\tau_k},\alpha) -b(t_0^{\tau_k}, \mu_0^{\tau_k},\alpha),\nabla f_{\bm n}(x)\raa \d \nu_0^{\tau_k}(x)\\
    &\qquad\qquad \qquad\qquad\qquad \qquad\qquad  \quad +\int_{\bar\oo}\la b(t_0^{\tau_k},\mu_0^{\tau_k}, \alpha), \nabla f_{\bm n}(x)\raa \d (\nu_0^{\tau_k}-\mu_0^{\tau_k})(x)\Big)\\
    &\geq -\frac{1}{2\delta_k}\!\sum_{m=1}^\infty\!\sum_{|\bm{n}|=m} \!\! c_{\bm n}\Big[ \la \mu_0^{\tau_k}\!-\!\nu_0^{\tau_k},f_{\bm n}\raa^2 \!+\! 2 \Big(\la b(s_0^{\tau_k},\nu_0^{\tau_k},\alpha)\!-\!b(t_0^{\tau_k}, \mu_0^{\tau_k},\alpha),\int_{\bar\oo}\!\!\nabla f_{\bm n}(x)\d \nu_0^{\tau_k}(x)\raa \Big)^2\\
    &\qquad \qquad \qquad \qquad \quad + 2\Big(\la b(t_0^{\tau_k},\mu_0^{\tau_k},\alpha),\int_{\bar\oo} \nabla f_{\bm n}(x)\d (\nu_0^{\tau_k}-\mu_0^{\tau_k})(x)\Big)^2 \Big]\\
    &\geq -\frac{1}{2\delta_k} \! \sum_{m=1}^\infty\!\sum_{|\bm{n}|=m} \!c_{\bm n}\!\Big[ \la \mu_0^{\tau_k}\!-\!\nu_0^{\tau_k},f_{\bm n}\raa ^2\!+\! 2|b(s_0^{\tau_k},\nu_0^{\tau_k},\alpha)\!-\!b(t_0^{\tau_k}, \mu_0^{\tau_k},\alpha)|^2 \Big|\int_{\bar\oo}\!\!\nabla  f_{\bm n}(x)\d \nu_0^{\tau_k}(x)\Big|^2\\
    &\qquad \qquad \qquad \qquad +2|b(t_0^{\tau_k},\mu_0^{\tau_k}, \alpha)|^2 \Big|\int_{\bar\oo}\!\nabla f_{\bm n}(x)\d  (\nu_0^{\tau_k} -\mu_0^{\tau_k})(x)\Big|^2\Big]\\
    &\geq -\frac{1}{2\delta_k}\sum_{m=1}^\infty\!\sum_{|\bm{n}|=m}\! c_{\bm n}\Big[ \la \mu_0^{\tau_k}-\nu_0^{\tau_k}, f_{\bm n}\raa^2 \!+\! 2K\big(|t_0^{\tau_k}-s_0^{\tau_k}|^2\!+\! S(\nu_0^{\tau_k}-\mu_0^{\tau_k})\big) \Big| \int_{\bar \oo}\! \nabla f_{\bm n}(x)\d \nu_0^{\tau_k}(x)\Big|^2\\
    &\qquad \qquad \qquad \qquad + 2K_4  \Big|\int_{\bar \oo}\!\nabla f_{\bm n}(x) \d (\nu_0^{\tau_k}-\mu_0^{\tau_k})(x)\Big|^2\Big].
  \end{align*}
Notice that $\partial_{x_i}f_{\bm n}(x)\in \chi(f_{\bm n})$ for each $i\in\! \{1,\ldots,d\}$, and so there exists $\bm{l}_i(\bm{n})\in \mathcal{I}_{\bm n}$ such that
$ \partial_{x_i} f_{\bm n}(x)=f_{\bm{l}_i(\bm{n})}(x)$. By \eqref{h-0}, $c_{\bm{l}_i(\bm{n})}\geq c_{\bm{n}}$, and
\[c_{\bm n}\Big| \int_{\bar\oo} \partial_{x_i} f_{\bm n}(x) \d \nu_0^{\tau_k}(x)\Big|^2\leq c_{\bm{l}_i(\bm{n})}\Big|\int_{\bar \oo}f_{\bm{l}_i(\bm{n})}(x)\d \nu_0^{\tau_k}(x)\Big|^2\]
Besides, Lemma \ref{lem-4} tells us that $S(\nu_0^{\tau_k})\leq 1$.
This yields that
\begin{align*}
\sum_{m=1}\sum_{|\bm{n}|=m}c_{\bm n} \Big|\int_{\bar\oo}\! \nabla f_{\bm n}(x)\d \nu_0^{\tau_k}(x)\Big|^2& \leq\sum_{m=1}^\infty \sum_{|\bm{n}|=m}\! \sum_{i=1}^d c_{\bm n} \Big|\int_{\bar\oo} \partial_{x_i} f_{\bm{n}}(x)\d \nu_0^{\tau_k}(x)\Big|^2   \\
  &\leq \sum_{m=1}^\infty \sum_{|\bm{n}|=m}\! \sum_{i=1}^d c_{\bm{l}_i(\bm{n})} \la \nu_0^{\tau_k}, f_{\bm{l}_i(\bm{n})}\raa^2 \leq d,
\end{align*}
and
\begin{equation}\label{h-12}
\begin{aligned}
  &\sum_{m=1}\sum_{|\bm{n}|=m} c_{\bm n} \Big|\int_{\bar\oo} \! \nabla f_{\bm n} (x)\d (\nu_0^{\tau_k}-\mu_0^{\tau_k})(x)\Big|^2
  \leq   d \sum_{m=1}\sum_{|\bm{n}|=m} c_{\bm n} \la \nu_0^{\tau_k} -\mu_0^{\tau_k}, f_{\bm n}\raa^2.
\end{aligned}
\end{equation}
Invoking the previous estimates, we finally obtain that
\begin{equation}\label{h-13}
\begin{split}
  \mathrm{(I)}&\geq -\frac{1}{2\delta_k} S(\mu_0^{\tau_k}-\nu_0^{\tau_k})-\frac{C}{2\delta_k} \big(|t_0^{\tau_k}-s_0^{\tau_k}|^2 +S(\mu_0^{\tau_k}-\nu_0^{\tau_k})\big)
\end{split}
\end{equation}for some constant $C>0$ independent of $k$.

Now we go to deal with term $\mathrm{(I\!I)}$. Similar to the estimate of \eqref{h-12}, there exists $C>0$ independent of $k$ such that
\begin{equation}\label{h-14}
\begin{split}
  \mathrm{(I\!I)}&=  \frac{1}{\delta_k}\sum_{m=1}^\infty \sum_{|\bm{n}|=m} c_{\bm n} \la \mu_0^{\tau_k}-\nu_0^{\tau_k},f_{\bm n}\raa \int_{\bar\oo} \mathrm{tr}(A\nabla^2 f_{\bm n}(x))\d(\nu_0^{\tau_k}-\mu_0^{\tau_k})(x)\\
  &\geq -\frac{1}{2\delta_k} \sum_{m=1}^\infty \sum_{|\bm{n}|=m} c_{\bm n} \Big[ \la \mu_0^{\tau_k}-\nu_0^{\tau_k},f_{\bm n}\raa^2 \!+\! \Big(\int_{\bar\oo}\!\mathrm{tr}\big(A \nabla^2 f_{\bm n}(x)\big) \d(\nu_0^{\tau_k}-\mu_0^{\tau_k})(x)\Big)^2\Big]\\
  &\geq -\frac{C}{\delta_k} \sum_{m=1}^\infty\sum_{|\bm{n}|=m} c_{\bm n} \la \mu_0^{\tau_k}-\nu_0^{\tau_k},f_{\bm n}\raa ^2=-\frac{C}{\delta_k} S(\mu_0^{\tau_k}-\nu_0^{\tau_k}).
\end{split}
\end{equation}

  At last, we estimate term $\mathrm{(I\!I\!I)}$. By virtue of \eqref{h-8.5}, $t_0^{\tau_k}$ converges to $\bar t_0$, $\mu_0^{\tau_k}$, $\nu_0^{\tau_k}$ converges weakly to $\bar\mu_0$ as $k\to \infty$. As $\oo$ is bounded, this also implies that $\W_2(\mu_0^{\tau_k},\bar\mu_0)\to 0$ as $k\to \infty$.  By $\mathrm{(H_3)}$,
  \begin{equation}\label{h-15}
  \begin{split}
   \lim_{k\to \infty } \mathrm{(I\!I\!I)}&=\lim_{k\to \infty}\Big\{\int_{\bar\oo}\!\big(\vartheta(s_0^{\tau_k}, x,\nu_0^{\tau_k}, \alpha)-\vartheta (t_0^{\tau_k},x, \mu_0^{\tau_k},\alpha)\big)\d \nu_0^{\tau_k}(x)\\
    &\qquad\qquad +\int_{\bar\oo}\big(\vartheta(t_0^{\tau_k},x,\mu_0^{\tau_k}, \alpha)-\vartheta(\bar t_0,x , \bar\mu_0,\alpha) \big)\d (\nu_0^{\tau_k}-\mu_0^{\tau_k})(x)\\
    &\qquad\qquad +\int_{\bar \oo} \vartheta(\bar t_0,x,\bar\mu_0,\alpha) \d(\nu_0^{\tau_k}-\mu_0^{\tau_k})(x)\Big\}\\
    &\geq \lim_{k\to \infty}\Big\{ - K_3\big(|s_0^{\tau_k}-t_0^{\tau_k}|+ \W_2(\nu_0^{\tau_k}, \mu_0^{\tau_k})\big) - 2K_3\big( |t_0^{\tau_k}-\bar t_0|+\W_2(\mu_0^{\tau_k},\bar \mu_0)\big)\\
    &\qquad\qquad +\int_{\bar\oo}\vartheta(\bar t_0,x,\bar\mu_0,\alpha)\d \nu_0^{\tau_k}(x)-\int_{\bar\oo}\vartheta(\bar t_0,x,\bar\mu_0,\alpha)\d \mu_0^{\tau_k}(x)\Big\}\\
    &=0.
  \end{split}
  \end{equation}

  Finally, inserting  the estimates \eqref{h-15}, \eqref{h-14}, \eqref{h-13} into \eqref{h-11}, due to \eqref{h-8.6}, we get
  \[\lim_{k\to \infty} -2\beta\geq \lim_{k\to \infty} \mathrm{(I)}+\mathrm{(I\!I)} +\mathrm{(I\!I\!I)}=0,\]
  which contradicts the fact $\beta>0$.

  Consequently, we have shown that the existence of $(\bar t,\bar\mu)$ satisfying \eqref{h-6} is false, and so is the existence of $(\tilde t,\tilde \mu) $. Thus, we conclude that $W(t,\mu)\leq V(t,\mu)$ for all $(t,\mu)\in\! [0,T)\!\times\!\pb(\bar\oo)$ as desired.
\end{proof}

Based on the comparison principle, Theorem \ref{thm-5}, the uniqueness of viscosity solution to HJB equation \eqref{e-1} with continuity property \eqref{a-4} can be proved in a standard way. Therefore, the value function $V(t,\mu)$ \eqref{a-2} associated with the optimal control problem for the reflected McKean-Vlasov SDE \eqref{a-1} can be  characterized as a unique viscosity solution to HJB equation \eqref{e-1}.


\begin{thebibliography}{17}

\bibitem{Adams} D. Adams, G. dos Reis, R. Ravaille, W. Salkeld, J. Tugaut, Large deviations and exit-times for reflected McKean-Vlasov equations with self-stabilizing terms and superlinear drifts, arXiv:2005.10057v3, 2022.


\bibitem{AF14} L. Ambrosio, J. Feng, On a class of first order Hamilton-Jacobi equations in metric spaces, J. Differential Equations 256 (2014), 2194-2245.

\bibitem{Amb} L. Ambrosio, N. Gigli, G. Savar\'e, Gradient flows in metric spaces and in the space of probability measures, Birkh\"auser Verlag, Switzenland, 2005.


\bibitem{Aro} D. Aronson, Bounds for the fundamental solution of a parabolic equation, Bull. Am. Math. Soc. 73 (1967), 890-896.


\bibitem{BH} R. Bass, P. Hsu, Some potential theory for reflecting Brownian motion in H\"older and Lipschitz domains, Ann. Probab. 19 (1991), 486-508.





\bibitem{Buk} R. Buckdahn, J. Li, S. Peng, C. Rainer, Mean-field stochastic differential equations and associated PDEs, Ann. Probab. 45 (2017), 824-878.

\bibitem{BIRS} M. Burzoni, V. Ignazio, A. Reppen, H. Soner, Viscosity solutions for controlled McKean-Vlasov jump-diffusions, SIAM J. Control Optim. 58 (2020), 1676-1699.

\bibitem{CM10} L. Caffarelli, R. McCann, Free boundaries in optimal transport and Monge-Amp\`ere obstacle problems, Ann. of Math.(2), 171 (2010), 673-730.

\bibitem{Car1} R. Carmona, F. Delarue, Probabilistic theory of mean field games with applications. I: mean field FBSDEs, control, and games. Probability theory and stochastic modelling 83. Springer, Cham. 2018

\bibitem{Car2} R. Carmona, F. Delarue, A. Lachapelle, Control of McKean-Vlasov dynamics versus mean field games, Math. Financ. Econ. 7 (2013), 131-166.

\bibitem{CLW21} S. Chen, J. Liu, X-J. Wang, Global regularity for the Monge-Amp\`ere equation with natural boundary condition, Ann. of Math.(2), 194 (2021), 745-793.


\bibitem{Chou} M. Choulli, L. Kayser, Gaussian lower bound for the Neumann Green function of a general parabolic operator, Positivity, 19 (2014), 625-646.

\bibitem{CL85} M. Crandall, P. Lions, Hamilton-Jacobi equations in infinite dimensions, Part I, J. Funct. Anal. 62 (1985), 379-396.





\bibitem{Fri} A. Friedman, Partial differential equations of parabolic type, Prentice-Hall, Englewood Cliffs, NJ, 1964.

\bibitem{GNT} W. Gangbo, T. Nguyen, A. Tudorascu, Hamilton-Jacobi equations in the Wasserstein space, Methods Appl. Anal. 15 (2008), 155-184.

\bibitem{GS15} W. Gangbo, A. Swiech, Metric viscosity solutions of Hamilton-Jacobi equations depending on local slopes, Calc. Var. 54 (2015), 1183-1218.

\bibitem{GT19} W. Gangbo, A. Tudorascu, On differentiability in the Wasserstein space and well-posedness for Hamilton-Jacobi equations, J. Math. Pures Appl. 125 (2019), 119-174.





\bibitem{LL07} J. Lasry, P. Lions, Mean field games, Jpn. J. Math. 2 (2007), 229-260.

\bibitem{Lions} P. Lions, Th\`eorie des jeux de champ moyen et applications, 2006-2012, http://www. college-de-france.fr/default/EN/all/equder/audiovideo.jsp.

%

\bibitem{Otto} F. Otto, The geometry of dissipative evolution equations: the porous medium equation, Comm. Partial Differential Equations, 26 (2001), 101-174.

\bibitem{Pham1} H. Pham, X. Wei, Dynamic programming for optimal control of stochastic McKean-Vlasov dynamics, SIAM J. Control Optim. 55 (2017), 1069-1101.

\bibitem{Pham2} H. Pham, X. Wei, Bellman equation and viscosity solutions for mean-field stochastic control problem, ESAIM Control Optim. Calc. Var. 24 (2018),   437-461.



\bibitem{TW08} N. Trudinger, X.-J. Wang, Boundary regularity of the Monge-Amp\`ere and affine maximal surface equations, Ann. of Math.(2), 167 (2008), 993-1028.


\bibitem{Vil} C. Villani, Optimal transport, old and new, Springer-Verlag Berlin Heidelberg, 2009.

\bibitem{Wang21} F.-Y. Wang,  Distribution dependent reflecting stochastic differential equations,  arXiv:2106.12737, 2021.

\bibitem{YZ} X. Yang, T.S. Zhang, Estimates of heat kernels with Neumann boundary conditions, Potenial Anal. 38 (2013), 549-572.

\bibitem{Zh96} Q. Zhang, On a parabolic equation with a singular lower order term, Transactions of AMS 348 (1996), 2867-2899.

\bibitem{Zh97} Q. Zhang, Gaussian bounds for the fundamential solutions of $\nabla(A\nabla u)+B\nabla u-u_t=0$, manuscripta math. 93 (1997), 381-390.


\end{thebibliography}
\end{document}